\numberwithin{equation}{section}
\definecolor{my-black}{rgb}{0,0,0}
\definecolor{my-blue}{rgb}{0,0,0.8}
\definecolor{my-red}{rgb}{0.8,0,0} 
\theoremstyle{plain} 
\newtheorem{lemma}{Lemma}[section]
\newtheorem{theorem}{Theorem}
\newtheorem{corollary}{Corollary}[section]
\newtheorem{proposition}{Proposition}[section]
\theoremstyle{definition} 
\newtheorem{definition}{Definition}[section]
\theoremstyle{remark}
\newtheorem*{remark-non}{Remark}
\DeclareMathOperator{\real}{Re}
\DeclareMathOperator{\imag}{Im}
\DeclareMathOperator{\SL}{SL}
\DeclareMathOperator{\PSL}{PSL}
\newcommand{\R}{\mathbb{R}}
\newcommand{\C}{\mathbb{C}}
\newcommand{\N}{\mathbb{N}}
\newcommand{\Q}{\mathbb{Q}}
\newcommand{\Z}{\mathbb{Z}}
\renewcommand{\H}{\mathbb{H}}
\title{Fourier interpolation from spheres}
\author{Martin Stoller}
\address{Ecole Polytechnique F{\'e}d{\'e}rale de Lausanne, Lausanne, Switzerland}
\email{martin.stoller@epfl.ch}
\date{}
\begin{document}
\begin{abstract}
In every dimension $d \geq 2$, we give an explicit formula that expresses the values of any Schwartz function on $\R^d$ only in terms of its restrictions, and the restrictions of its Fourier transform, to all origin-centered spheres whose radius is the square root of an integer. We thus generalize an interpolation theorem by Radchenko and Viazovska  \cite{R-V} to higher dimensions. We develop a general tool to translate Fourier uniqueness- and interpolation results for radial functions in higher dimensions, to corresponding results for non-radial functions in a fixed dimension. In dimensions greater or equal to $5$, we solve the radial problem using a construction closely related to classical Poincar{\'e} series. In the remaining small dimensions, we combine this technique with a direct generalization of the Radchenko--Viazovska formula to higher-dimensional radial functions, which we deduce from general results by Bondarenko, Radchenko and Seip \cite{BRS}.

\end{abstract}

\maketitle
\section{Introduction}\label{sec:intro}
\subsection{Main result} 

The purpose of this paper is to prove the following interpolation formula, which generalizes to higher dimensions the ones obtained by Radchenko and Viazovska \cite{R-V}. Our main result is easiest to formulate in dimensions at least $5$. The theorem covering dimensions $2,3$ and $4$ will be stated and proved in  \S \ref{sec:small-dimensions}.

Throughout the paper, we write $\hat{f}$ for the Fourier transform of an integrable function $f$ on $\R^d$ (see \S \ref{sec:notation-fourier-transform} for the normalization). We often abbreviate by $S = S^{d-1}$ the unit sphere in $\R^d$ and always integrate with respect to probability surface measure over it.

\begin{theorem}\label{thm:main-thm-non-radial}
Let $d \geq 5$ and let the smooth functions $A_n, \tilde{A}_n : \R^d \times S^{d-1} \rightarrow \C$ be defined as in \S \ref{sec:proof-main-theorem}. Then, for every Schwartz function $f:\R^d \rightarrow \C$ and every point $x \in \R^d$, we have 
\begin{equation}\label{eq:ultimate-interpolation-formula-in-thm}
f(x) =\sum_{n=1}^{\infty}{\int_{S}{A_n(x, \zeta) f(\sqrt{n}\zeta) d\zeta}} +  \sum_{n=1}^{\infty}{\int_{S}{{\tilde{A}}_n(x, \zeta) \hat{f}(\sqrt{n}\zeta) d\zeta}}
\end{equation}
and both series converge absolutely. 
\end{theorem}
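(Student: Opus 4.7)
The strategy follows the two-step framework announced in the abstract: first reduce the non-radial interpolation problem to a family of radial interpolation problems in \emph{higher} dimensions via spherical harmonics, and then solve each radial problem by a construction close to classical Poincar\'e series.

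\emph{Step 1: Spherical harmonic reduction.} I expand $f$ in an orthonormal basis of spherical harmonics: writing $x = r \omega$ with $r = |x|$ and $\omega \in S^{d-1}$,
\begin{equation*}
f(r \omega) \;=\; \sum_{\ell \geq 0} \sum_{m} f_{\ell,m}(r)\, Y_{\ell,m}(\omega), \qquad f_{\ell,m}(r) = \int_{S} f(r \zeta) \overline{Y_{\ell,m}(\zeta)}\, d\zeta.
\end{equation*}
By the Hecke--Bochner identity, for $g_{\ell,m}(r) := r^{-\ell} f_{\ell,m}(r)$ the $d$-dimensional Fourier transform of $g_{\ell,m}(|x|)\,|x|^\ell Y_{\ell,m}(x/|x|)$ equals $(-i)^\ell |\xi|^\ell Y_{\ell,m}(\xi/|\xi|)\,(\mathcal{F}_{d+2\ell}\, g_{\ell,m})(|\xi|)$, where $\mathcal{F}_{D}$ denotes the radial Fourier transform on $\R^{D}$. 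Thus the $(\ell,m)$-component of $f$ is Fourier-conjugate to a radial Schwartz function in dimension $D = d+2\ell$.

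\emph{Step 2: Assembly from a radial interpolation formula.} Granting a radial interpolation formula of the form
\begin{equation*}
g(r) \;=\; \sum_{n=1}^\infty a_{n,D}(r)\, g(\sqrt{n}) \;+\; \sum_{n=1}^\infty \tilde a_{n,D}(r)\, (\mathcal{F}_{D} g)(\sqrt{n}) \qquad (D = d+2\ell \geq 5),
\end{equation*}
with kernels $a_{n,D}, \tilde a_{n,D}$ produced from Poincar\'e series of modular forms of a weight depending on $D$ on the theta group $\Gamma_\theta \subset \SL_2(\Z)$, I apply the formula to each $g_{\ell,m}$, multiply by $Y_{\ell,m}(\omega)$, and resum over $m$ using the addition theorem for spherical harmonics. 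The $m$-sum collapses to a zonal kernel in $\langle \omega, \zeta \rangle$ involving a Gegenbauer polynomial $C_\ell^{(d/2-1)}$, and grouping terms by $n$ yields
\begin{equation*}
A_n(x, \zeta) \;=\; \sum_{\ell \geq 0} c_{\ell, d} \, |x|^\ell\, a_{n, d+2\ell}(|x|)\, C_\ell^{(d/2-1)}\!\bigl(\langle x/|x|, \zeta \rangle\bigr),
\end{equation*}
with an entirely analogous expression for $\tilde A_n$ (the extra factor $(-i)^\ell n^{-\ell/2}$ from Hecke's identity is absorbed into the $\tilde a_{n,d+2\ell}$). These match the kernels to be defined in \S \ref{sec:proof-main-theorem}, and by construction the identity \eqref{eq:ultimate-interpolation-formula-in-thm} holds formally term by term.

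\emph{Main obstacle.} The decisive step is the absolute convergence of the double series (jointly in $n$ and the implicit spherical-harmonic index $\ell$) and the smoothness of $A_n, \tilde A_n$ as functions on $\R^d \times S^{d-1}$. This requires uniform control on the Poincar\'e coefficients $a_{n,D}(r), \tilde a_{n,D}(r)$ in all three parameters $n, D, r$, combined with rapid decay of the spherical-harmonic coefficients $f_{\ell,m}(r)$ in $\ell$. The latter I would extract by repeated integration by parts against the spherical Laplacian, each application gaining a factor $\ell(\ell+d-2)$, which the Schwartz hypothesis supplies in unlimited supply. The hypothesis $d \geq 5$ is precisely what secures absolute convergence of the relevant Poincar\'e series already at $\ell = 0$; increasing $\ell$ only improves the weight, so the combined estimates should deliver both the smoothness of $A_n, \tilde A_n$ and the asserted absolute convergence.
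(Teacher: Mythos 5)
Your overall architecture matches the paper's: reduce to higher-dimensional radial interpolation via the spherical-harmonic/Hecke--Bochner decomposition, apply a Poincar\'e-series radial formula in each dimension $d+2\ell$, and resum via the addition theorem (the paper's zonal kernel $Z_m^d$ is a normalization of your Gegenbauer polynomial). You correctly identify the decisive issue as uniform control on the radial kernels in all three parameters $n,D,r$ simultaneously. However, the sentence you offer to close that gap --- ``increasing $\ell$ only improves the weight, so the combined estimates should deliver \dots absolute convergence'' --- does not hold up, and it is precisely there that the real work of the paper is concentrated.

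Concretely: the obvious bound on the $n$th Fourier coefficient of a weight-$k$ Poincar\'e-type series is of order $n^{k-1}$ (from shifting the contour to height $\asymp 1/n$ and summing $\sum_c c^{-k}$), so for $k = D/2 = d/2 + \ell$ the bound $\sim n^{d/2 + \ell - 1}$ \emph{grows exponentially in $\ell$} for fixed $n$, while the Gegenbauer/zonal factor only contributes a polynomial in $\ell$. On its face the $\ell$-sum defining $A_n(x,\zeta)$ therefore diverges, and the rapid decay of the spherical-harmonic coefficients of $f$ cannot rescue it --- the kernel $A_n$ must be shown well-defined and smooth \emph{before} $f$ enters. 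What the paper actually does (Theorem~\ref{thm:main-thm-radial-poincare}) is choose the contour height $y_0 \asymp p/n$ rather than $\asymp 1/n$, trading a worse $n$-exponent ($n^{p/2}$ instead of $n^{p/2-1}$) for the super-exponentially decaying factor $(47/p)^{p/4}$ with $p = d+2\ell$, and this is the engine that forces the $\ell$-sum to converge. A second, complementary mechanism is needed to tame the $n$-dependence for the outer sum: for $|x|>0$ the paper proves a bound $\ll n^{p/4 + 1 + \varepsilon}\,|x|^{-p/2 + 2 + 2\varepsilon}$, and the crucial cancellation $|x|^{-m}\cdot |x|^{m} = 1$ (from the zonal kernel's homogeneity) together with $n^{m/2}\cdot n^{-m/2}=1$ brings the per-$\ell$ contribution down to $O(n^{O(1)})$. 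Neither mechanism appears in your sketch, so as written the convergence claim is a genuine gap, not a detail to be supplied routinely.
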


Formula \eqref{eq:ultimate-interpolation-formula-in-thm} holds more generally for functions $f$ on $\R^d$, such that $f$ and $\hat{f}$ decay sufficiently fast at infinity, see Corollary \ref{cor:extension}. In \S \ref{sec:proof-main-theorem}, we will show that the partial sums on the right hand side of \eqref{eq:ultimate-interpolation-formula-in-thm} converge uniformly, together with all partial derivatives, on compact subsets of $\R^{d} \setminus \{0 \}$.
\subsection{Context}\label{sec:intro-context}
Radchenko and Viazovska \cite{R-V} constructed a sequence of even Schwartz functions $a_n : \R \rightarrow \R$ such that, for all even Schwartz functions $f: \R \rightarrow \C$ and all points $x \in \R$, one has
\begin{equation}\label{eq:1-d-even-interpolation}
f(x) = \sum_{n=0}^{\infty}{f(\sqrt{n}) a_n(x)} + \sum_{n=0}^{\infty}{\hat{f}(\sqrt{n}) \widehat{a_n}(x)},
\end{equation}
where both series converge absolutely. They obtained a similar result for odd Schwartz functions, to which we will return shortly.  Their result is one out of a growing number of constructive existence theorems  in Euclidean harmonic analysis \cite{Viazovska8, CKMRV-24, R-V, CohnConcalves, CKMRV-E8-leech}, related to sphere packing, energy minimization, sign uncertainty principles and interpolation, in which the constructed object comes from a modular form or some generalization thereof. The employed method often involves a certain integral transform, which has first appeaerd in Viazovska's work on sphere packing \cite{Viazovska8}.

The cited works are primarily concerned with \emph{radial} Schwartz functions and focus on a particular dimension. The restriction to radial functions is natural given the type of questions that these works adress. In the context of the interpolation theorems \cite{R-V,CKMRV-E8-leech}, one can still ask if the formulas generalize to other dimensions, or to functions that are not necessarily radial. Theorem \ref{thm:main-thm-non-radial} provides such a generalization. 

We remark that, in contrast to the cited works on interpolation, we do not prove a  free interpolation theorem in our non-radial setting, so \eqref{eq:ultimate-interpolation-formula-in-thm} is merely a ``sampling formula". More precisely, we do not give sufficient conditions for when a pair of sequences of functions on the unit sphere comes from a Schwartz function, by restricting it and its Fourier transform  to the spheres $\sqrt{n}S^{d-1}$. We will provide some necessary conditions in \S \ref{sec:space-of-relations-infinite-dimensional}, which indicate that the problem is more difficult than in the radial setting.

We still have the following immediate corollary of Theorem \ref{thm:main-thm-non-radial}, which may be interesting in its own right. Theorem \ref{thm:non-radial-small-dimensions} below gives an analogous corollary for dimensions $2,3,4$, taking into account information near the origin.
\begin{corollary}\label{cor:trivial-corollary}
For $d \geq 5$, the only Schwartz function $f$ on $\R^d$ satisfying $f(\sqrt{n}S) = \hat{f}(\sqrt{n}S) = \{0 \}$ for all $n \geq 1$ is $f = 0$.
\end{corollary}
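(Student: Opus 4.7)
The plan is to invoke Theorem \ref{thm:main-thm-non-radial} directly and observe that its hypotheses trivialize the right-hand side of \eqref{eq:ultimate-interpolation-formula-in-thm}. Suppose $f$ is a Schwartz function on $\R^d$ with $d \geq 5$ satisfying $f(\sqrt{n}\zeta) = \hat{f}(\sqrt{n}\zeta) = 0$ for every integer $n \geq 1$ and every $\zeta \in S^{d-1}$. Then, for each $n \geq 1$, both integrands
\begin{equation*}
A_n(x, \zeta)\, f(\sqrt{n}\zeta) \qquad \text{and} \qquad \tilde{A}_n(x, \zeta)\, \hat{f}(\sqrt{n}\zeta)
\end{equation*}
vanish identically in $\zeta$, so every term of both series in \eqref{eq:ultimate-interpolation-formula-in-thm} is zero. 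The interpolation formula then forces $f(x) = 0$ for all $x \in \R^d$.

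The only point to be mildly careful about is the value at $x = 0$, since the remark following Theorem \ref{thm:main-thm-non-radial} only asserts uniform convergence of the partial sums on compact subsets of $\R^d \setminus \{0\}$ and the kernels $A_n, \tilde{A}_n$ could a priori misbehave at the origin. This is not a real obstacle: the theorem as stated asserts \eqref{eq:ultimate-interpolation-formula-in-thm} pointwise at every $x \in \R^d$, and even if one wishes to sidestep evaluating at the origin, continuity of the Schwartz function $f$ together with vanishing on the dense set $\R^d \setminus \{0\}$ yields $f(0) = 0$ a posteriori.

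In short, there is no genuine step to take beyond substituting the vanishing hypothesis into the interpolation formula; the whole difficulty has been discharged in the proof of Theorem \ref{thm:main-thm-non-radial}, and the corollary is pure bookkeeping. The only thing one would want to double-check while writing this out is that the absolute convergence claim in Theorem \ref{thm:main-thm-non-radial} indeed justifies discarding the series term by term, which it does immediately.
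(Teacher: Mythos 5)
Your proof is correct and is exactly what the paper intends: the paper states this as an ``immediate corollary'' of Theorem \ref{thm:main-thm-non-radial} without writing out a separate argument, and substituting the vanishing hypothesis into the interpolation formula \eqref{eq:ultimate-interpolation-formula-in-thm} is the whole content. Your extra remark about the origin is a reasonable sanity check but not needed, since the theorem asserts the identity pointwise at every $x \in \R^d$, including $x = 0$.
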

The corollary naturally extends to ellipsoids, by composing $f$ with an  invertible linear transformation and  correspondingly $\hat{f}$ with the adjoint of the inverse. One could ask whether a purely analytic proof of Corollary \ref{cor:trivial-corollary}  can be given, one that does not go through Theorem \ref{thm:main-thm-non-radial} or modular forms. Recently, J. Ramos and M. Sousa obtained Fourier uniqueness results in this direction, namely for radial Schwartz functions and sequences of interpolation nodes that concentrate ``\emph{more} densely near infinity than $\sqrt{n}$", see \cite[Thm. 1 and \S 5]{Ramos-Sousa} for precise statements. Our analysis in \S \ref{sec:harmonic-analysis} shows how one can deduce from such results corresponding uniqueness results for non-radial Schwartz functions, see Corollary \ref{cor:cheap-fourier-uniqueness}.
\subsection{Ideas}
We proceed by further outlining the contents of the paper and sketching some of the main ideas.
\subsubsection{Relationship to the radial problem} In  \S \ref{sec:harmonic-analysis}, we show how to deduce a non-radial interpolation formula  in dimension $d$, from radial ones in a sequence of higher dimensions. More specifically, we deduce such a formula from the existence of radial functions $a_{p,n}, \tilde{a}_{p,n}$ on $\R^{p}$, with $p \equiv d \pmod{2}$, having the property that for all $f \in \mathcal{S}_{\text{rad}}(\R^p)$ and all $x \in \R^p$,
\begin{equation}\label{eq:interpolation-radial-intro}
f(x) = \sum_{n=0}^{\infty}{ f(\sqrt{n})  a_{p,n}(x)} + \sum_{n=0}^{\infty}{  \hat{f}(\sqrt{n})  \tilde{a}_{p,n}(x)}.
\end{equation}
This step towards Theorem \ref{thm:main-thm-non-radial} is quite general and actually works for arbitrary sequences of interpolation nodes. It relies only on harmonic analysis on the sphere and Euclidean space with no reference to the particular nodes $\sqrt{n}$. We state the result as Corollary \ref{cor:cheap-non-radial}, which gives a formula that expresses any value $f(x)$ for $f \in \mathcal{S}(\R^d)$, as a sum of two double series of integrals over the sphere and may be badly behaved from the point of view of convergence. Interchanging sums and integrals formally, we  find candidates for the kernels $A_n, \tilde{A}_n$ in Theorem \ref{thm:main-thm-non-radial}.  To justify these rearrangments, we need absolute convergence of the double series and hence some specific bounds on the functions $a_{p,n}(x), \tilde{a}_{p,n}(x)$, that are explicit in all parameters, including the ``auxiliary" dimension $p$. We do not know, whether one can produce radial Schwartz functions obeying sufficient bounds, via contour integral methods similar to those in \cite{R-V, CKMRV-E8-leech}. We circumvent the use of such contour integrals by solving the radial interpolation problem in a different way, based on a method which is closely related to the construction of Poincar{\'e} series, described in a bit more detail below in \S \ref{sec:poincare-series-intro}.

A special case of using radial functions in higher dimensions, as mentioned above, is already implicit in Radchenko's and Viazovska's work, as we now briefly explain. Besides \eqref{eq:1-d-even-interpolation}, valid for even (i.e. radial) Schwartz functions on $\R$, Radchenko and Viazovska also find a formula for odd Schwartz functions. Since
\begin{equation}\label{eq:even-odd-decomposition}
\mathcal{S}(\R) = \mathcal{S}_{\text{even}}(\R) \oplus \mathcal{S}_{\text{odd}}(\R) =  \mathcal{S}_{\text{rad}}(\R) \oplus x\mathcal{S}_{\text{rad}}(\R),
\end{equation}
one can combine the two and write down a formula that reconstructs any $f \in \mathcal{S}(\R)$ from the values $f(\sqrt{n})$, $\hat{f}(\sqrt{n})$, together with the values $f'(0)$, $\hat{f}'(0)$. The underlying mechanism here is that the topological vector space $\mathcal{S}_{\text{odd}}(\R)$ is isomorphic to $\mathcal{S}_{\text{rad}}(\R^3)$, in a way that is compatible with Fourier transforms. To describe the isomorphism explicitly, let us define, for any $f \in \mathcal{S}(\R)$, the radial function $L f: \R^3 \rightarrow \C$, by 
\begin{equation}\label{eq:simple-definition-lift}
L f(x) = \frac{f(|x|)-f(-|x|)}{2|x|} \quad  \text{for } x  \in \R^3 \setminus \{0 \}\quad \text{and} \quad Lf(0) = f'(0).
\end{equation}
Using Taylor's Theorem one can show  that $Lf \in \mathcal{S}_{\text{rad}}(\R^3)$. In the other direction, we can define, for each $f \in \mathcal{S}_{\text{rad}}(\R^3)$, the Schwartz function $Rf  \in \mathcal{S}_{\text{odd}}(\R)$ by $Rf(t) = tf(t, 0, 0)$. The (continuous) linear maps $R$ and $L$ are then mutually inverse and intertwined with the Fourier transforms on $\R$ and $\R^3$ by $\widehat{Lf} = i L \hat{f}$, see \cite[\S 2.1 in ch.4]{Howe-Tan}. Thus, we can use use the map $R$ to ``transport" interpolation formulas as in \eqref{eq:interpolation-radial-intro} from $\mathcal{S}_{\text{rad}}(\R^3)$ to $\mathcal{S}_{\text{odd}}(\R)$.

For dimensions $d \geq 2$ we will define in \S \ref{sec:harmonic-analysis} a generalization of the map $L$ in \eqref{eq:simple-definition-lift}, by replacing the finite average of over the zero-dimensional sphere $S^{0} = \{-1, 1\}$  by a continuous average over $S^{d-1}$, one for each harmonic polynomial. In fact, the definitions can be written in the same way, by working with the probability measure on $S^{0}$, assigning mass $1/2$ to both of its endpoints. The finite direct sum \eqref{eq:even-odd-decomposition}  will be replaced by an infinite direct (topological)  sum, described by spaces of harmonic polynomials.
\subsubsection{Solving the radial problem by Poincar{\'e}-type series}\label{sec:poincare-series-intro}
To have a supply of radial functions satisfying \eqref{eq:interpolation-radial-intro}, we will prove in \S \ref{sec:poincare-type-series} the following Theorem. It pertains to dimensions at least 5. For dimensions $ 2,3,4$, we will need an additional result, that we  deduce from more general results by Bondarenko, Radchenko and Seip \cite{BRS}, see \S \ref{sec:small-dimensions}.
\begin{theorem}\label{thm:main-thm-radial-poincare}
Let $p \geq 5$. There exist sequences of even entire functions $b_{p,n}, \tilde{b}_{p,n} : \C \rightarrow \C$ such that, for every $f \in \mathcal{S}_{\text{rad}}(\R^p)$ and every $x \in \R^p$, we have
\begin{equation}\label{eq:interpolation-formula-radial-in-main-thm-poincare}
f(x) = \sum_{n=1}^{\infty}{f(\sqrt{n}) b_{p,n}(|x|)} + \sum_{n=1}^{\infty}{ \hat{f}(\sqrt{n}) \tilde{b}_{p,n}(|x|)}
\end{equation}
with absolute convergence. They obey the following bounds.
\begin{enumerate}[(i)]
\item There exist two constants $C_1, C_2 > 0$, independent of $p$, such that, for all $n \geq 1$, all $r \in \R$ and all $\varepsilon \in (0, 1/8]$, we have
\begin{align}
\max{(|b_{p,n}(r)|, |\tilde{b}_{p,n}(r)|) } &\leq C_1  (47/p)^{p/4} \, n^{p/2}, \label{eq:bounds-for-bpn-sup-norm-in-thm}\\
r \neq 0\; \Rightarrow \; \max{(|b_{p,n}(r)|, |\tilde{b}_{p,n}(r)| )} &\leq C_2 \varepsilon^{-2} n^{p/4+1+ \varepsilon}|r|^{-p/2+2(1+\varepsilon)}. \label{eq:bounds-for-bpn-nonzero-r-in-thm}
\end{align}
\item For every multi-index $\alpha \in \N_0^d$ and every $R>0$, there exist constants $C_3, C_4 >0$, depending on $d$, $\alpha$ and $R$, but not on $p$, such that for all $n \geq 1$, all $x \in \R^d$, with $|x| \leq R$ and all $\varepsilon \in (0, 1/8]$, we have
\begin{align}
\max{\left (|\partial^{\alpha}b_{p,n}(|x|)|, |\partial^{\alpha} \tilde{b}_{p,n}(|x|)|  \right)} &\leq C_3 (47 / p)^{p/4} n^{p/2+|\alpha|},  \label{eq:bound-partial-b_p-with-zero-in-thm} \\
 x \neq 0 \; \Rightarrow \; \max{\left (|\partial^{\alpha}b_{p,n}(|x|)|, |\partial^{\alpha} \tilde{b}_{p,n}(|x|)|  \right)} & \leq C_4 \varepsilon^{-2} n^{p/4+1+ \varepsilon+|\alpha|}|x|^{-p/2+2(1+ \varepsilon)}. \label{eq:bound-partial-b_p-away-from-zero-in-thm}
\end{align}
\end{enumerate}
\end{theorem}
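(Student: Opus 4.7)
The plan is to reduce the interpolation formula to a weight-$p/2$ modular decomposition problem on the upper half-plane $\H$, solved by classical holomorphic Poincar\'e series. The hypothesis $p \geq 5$ enters precisely through the fact that the resulting weight $k := p/2 > 2$ lies above the classical threshold for absolute convergence of $\sum_{(c,d)} |cz+d|^{-k}$; this gives a $p$-uniform construction and obviates the contour integrals used in \cite{R-V, CKMRV-E8-leech}. Testing the desired formula against the Gaussian $g_w(x) := e^{i\pi w |x|^2}$, $w \in \H$, and using $g_w(\sqrt{n}) = e^{i\pi n w}$ with $\widehat{g_w}(x) = (w/i)^{-p/2} g_{-1/w}(x)$, \eqref{eq:interpolation-formula-radial-in-main-thm-poincare} becomes the $w$-identity
\begin{equation*}
e^{i\pi w r^2} \;=\; B(w,r) \;+\; (w/i)^{-p/2} \tilde B(-1/w, r),
\end{equation*}
with $B(w,r) := \sum_{n \geq 1} b_{p,n}(r) e^{i\pi n w}$ and $\tilde B$ defined analogously. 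This is to be read as a weight-$p/2$ decomposition of the seed into holomorphic pieces at the two cusps of the subgroup $\Gamma \subset \PSL_2(\Z)$ generated by $T^2 : w \mapsto w+2$ and $S : w \mapsto -1/w$.

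To carry this out, set $\Gamma_\infty := \langle T^2 \rangle$ and $j(\gamma, w) := cw+d$ with $(c,d)$ the lower row of $\gamma$, and form the Poincar\'e kernel with Gaussian seed
\begin{equation*}
\mathcal P_p(r, w) \;:=\; \sum_{\gamma \in \Gamma_\infty \backslash \Gamma} j(\gamma, w)^{-p/2} \, e^{i\pi r^2 (\gamma w)}.
\end{equation*}
For $p/2 > 2$ this converges absolutely on compacta in $\H \times \C$, defines a weight-$p/2$ automorphic function in $w$, and is even and entire of order $2$ in $r$ because each summand is. Subtracting the trivial-coset term $e^{i\pi r^2 w}$ leaves a remainder bounded as $\imag w \to \infty$, whose Fourier expansion $\sum_{n \geq 1} b_{p,n}(r) e^{i\pi n w}$ defines the $b_{p,n}(r)$; the companion $\tilde b_{p,n}(r)$ is extracted, via the $S$-transformation, from the expansion of $\mathcal P_p$ at the cusp $0$. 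The identity \eqref{eq:interpolation-formula-radial-in-main-thm-poincare} for $f = g_w$ is then precisely the weight-$p/2$ automorphy of $\mathcal P_p$ unfolded at the two cusps: the principal part $e^{i\pi r^2 w}$ at $\infty$ is exchanged under $w \mapsto -1/w$ with $(w/i)^{-p/2} e^{-i\pi r^2/w}$ at $0$, and the two remainders reconstruct $B(w,r)$ and $(w/i)^{-p/2} \tilde B(-1/w, r)$ respectively. Since the linear span of $\{g_w : w \in \H\}$ is dense in $\mathcal S_{\text{rad}}(\R^p)$, and once the bounds below are established the right-hand side of \eqref{eq:interpolation-formula-radial-in-main-thm-poincare} is continuous in $f$ for the Schwartz topology, the identity extends from Gaussians to all radial Schwartz $f$.

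The bounds (i)--(ii) follow from the integral representation
\begin{equation*}
b_{p,n}(r) \;=\; \tfrac{1}{2}\int_{iT}^{iT+2} \bigl(\mathcal P_p(r,w) - e^{i\pi r^2 w}\bigr) e^{-i\pi n w}\,dw
\end{equation*}
optimized in $T = T(p,n,r) > 0$: the termwise estimate $|j(\gamma,w)|^{-p/2} |e^{i\pi r^2 \gamma w}| \leq |cw+d|^{-p/2} e^{-\pi r^2 \imag(\gamma w)}$, combined with the absolute convergence of the Eisenstein-type sum $\sum_{(c,d)} |cw+d|^{-p/2}$ uniformly in $p$ for $p/2 > 2$, reduces everything to optimizing a Gaussian integral, and derivative bounds follow by differentiating under the integral sign. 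The main obstacle will be the $p$-uniform calibration of Stirling's formula required to pin down the explicit constant $47$ in \eqref{eq:bounds-for-bpn-sup-norm-in-thm}: the exponent $p/4$ reflects a saddle-point balance between the factor $e^{\pi n T}$ from the Fourier kernel and the Eisenstein decay $T^{-p/2+O(1)}$, and producing a clean $p$-independent constant demands sharp bookkeeping at the saddle point rather than an abstract convergence argument.
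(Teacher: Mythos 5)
Your high-level plan matches the paper's in several respects: both exploit that $p/2 > 2$ gives absolute convergence of the Eisenstein-type sum, both test against Gaussians $e^{i\pi w r^2}$ to reduce the interpolation formula to a functional equation for generating series, and both optimize a Gaussian factor at a saddle $T \asymp p/n$ to produce the constant $47 \approx 2\pi e^2$. But there is a concrete gap at the heart of the construction.

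The series you propose,
\[
\mathcal P_p(r,w) = \sum_{\gamma \in \Gamma_\infty \backslash \Gamma} j(\gamma,w)^{-p/2}\,e^{i\pi r^2 (\gamma w)},
\]
is not well-defined for $r^2 \notin \Z$. If $\gamma' = T^{2\ell}\gamma$ then $\gamma'w = \gamma w + 2\ell$ and $j(\gamma',w)=j(\gamma,w)$, so the summand changes by the phase $e^{2\pi i \ell r^2}\neq 1$; the terms do not descend to $\Gamma_\infty$-cosets. This is exactly the point the paper flags: because the Gaussian seed is not $T^2$-invariant, one cannot sum over cosets, but must fix a concrete set of matrices. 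The paper's sets $\mathcal B$ and $\tilde{\mathcal B}$ (reduced words in $\overline{\Gamma(2)}$ starting with a nonzero power of $B=[ST^2S]$, and $\tilde{\mathcal B}=\mathcal B\bar S\sqcup\{\bar S\}$, Definition \ref{def:set-B}) are chosen so that (a) they are stable under \emph{right} multiplication by $T^2$, which is what makes the two generating series $2$-periodic in $\tau$ despite the non-invariant seed; (b) they satisfy $|a_M|\leq|c_M|$ (Lemma \ref{lem:a-less-than-c}), which via $M\tau=a/c-1/(c(c\tau+d))$ keeps $|M\tau|$ bounded and lets the Gaussian factor supply the $|r|^{-p/2+2(1+\varepsilon)}$ decay---without that structural control on the entries, bounding $e^{-\pi r^2\imag(M\tau)}$ by $1$ kills all $r$-decay; and (c) the relation $\tilde{\mathcal B}\bar S=\mathcal B\sqcup\{1\}$ makes the two series telescope into $F_p + \tilde F_p|_{p/2}S = e^{\pi i\tau r^2}$ exactly.

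Relatedly, your description of $\tilde b_{p,n}$ as ``the expansion of $\mathcal P_p$ at the cusp $0$'' is not what happens. If $\mathcal P_p$ were an honest $\Gamma_\theta$-automorphic Poincar\'e series, then $0$ and $\infty$ are $\Gamma_\theta$-equivalent cusps and the $S$-transformed expansion would reproduce the same coefficients, not a companion sequence. The actual objects $F_p$ and $\tilde F_p$ are \emph{not} automorphic; they are two genuinely distinct series, over the distinct index sets $\mathcal B$ and $\tilde{\mathcal B}$, satisfying the mixed cocycle-type identity (i) of \S\ref{sec:generating-series-and-functional-equations} rather than automorphy. (You also need to show the negative and zero Fourier coefficients vanish---``bounded as $\imag w\to\infty$'' is not sufficient for the $n=0$ coefficient; the paper's Corollary \ref{cor:b_p-zero-for-negative-n} sends $y_0\to\infty$ after a quantitative bound.) So while the analytic skeleton of your argument is sound, the actual definition of the series you integrate is missing, and that definition is where most of the work in \S\ref{sec:a-particular-set-of-words} lives.
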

\begin{remark-non}
The assertion in part (ii) includes implicitly that for each $d \in \N$, the functions $x \mapsto b_{p,n}(|x|), \tilde{b}_{p,n}(|x|)$ are smooth on $\R^d$, in particular in a neighborhood of the origin. The number $47$ comes from $47 \geq 2\pi e^2 \approx 46.4$. 
\end{remark-non}
We now briefly explain what goes into the proof of Theorem \ref{thm:main-thm-radial-poincare}. Let $\H = \{\tau \in \C\,:\, \imag(\tau) > 0 \}$ denote the upper half plane. The strategy is to find the generating functions \[
F_p(\tau,r) = \sum_{n=1}^{\infty}{b_{p,n}(r)e^{\pi i n \tau}}, \qquad \tilde{F}_p(\tau,r) = \sum_{n=1}^{\infty}{\tilde{b}_{p,n}(r) e^{\pi i n\tau }},
\]
knowing only that they need to satisfy a certain functional equation, which comes from applying the desired interpolation formula \eqref{eq:interpolation-formula-radial-in-main-thm-poincare} to Gaussians $e^{\pi i \tau r^2}$. This strategy has already appeared in \cite{R-V, CKMRV-E8-leech} and we explain the version we need in \S \ref{sec:generating-series-and-functional-equations}. The cited works succeed in finding the generating functions by integrating a suitable meromorphic and separately modular kernel function on $\H \times \H$ against the Gaussian $e^{\pi i z r^2}$ over a suitable path. Here we use a different method, which is closely related to the construction of Poincar{\'e} series and partly inspired by the works of Knopp on Eichler cohomology \cite{Knopp-eichler-coh}. 

In the context of classical modular forms, a Poincar{\'e} series $P_m$ has an \emph{integral} parameter $m \geq 1$, which indicates that the $m$th Fourier coefficient of a cusp form is returned when we pair it against $P_m$ with respect to the Petersson inner product. It is constructed by averaging the function $e^{\pi im \tau}$, with respect to the so-called slash-action, over cosets of the subgroup of translations, of the congruence subgroup involved. In our case, the relevant congruence subgroup is $\Gamma(2)$. Roughly speaking, we will modify this construction by summing over a specific subset of $\Gamma(2)$, which represents the above coset space, up to the identity coset and, instead of averaging the function $e^{\pi i m \tau} = e^{\pi i \sqrt{m}^2 \tau}$, we will average the Gaussian $e^{\pi i r^2 \tau}$ over that subset (for any $r \in \C$), so that, when $r^2 \in \Z$, we almost  have $F_p(\tau,r) = P_{r^2}(\tau)$, up to the constant term in the Fourier expansion and up to constant multiples.

By imitating the classical computation for the Fourier coefficients of Poincar{\'e} series, we can write $b_{p,n}(r)$ as an infinite series involving Bessel functions and finite exponential sums sums that look very much like classical Kloosterman sums, see  \eqref{eq:bpn-explicit-at-non-zero}, \eqref{eq:bpn-explicit-at-zero}.  By specializing these formulas to $r = \sqrt{m}$ and even dimensions $p \geq 6$, we will see that, if $n \neq m$, the value $b_{p,n}(\sqrt{m})$ equals (up to constant factors) the $n$th Fourier coefficient of the $m$th Poincar{\'e}-series in weight $p/2$ with respect to  $\Gamma_0(4)$ (which is conjugate to $\Gamma(2)$) and character $\chi^{k}$, where $\chi$ is the non-trivial Dirichlet character  modulo $4$. These observations allow us to deduce that, for infinitely many indices $n$, the function $r \mapsto |r|^{ p/2-1+\varepsilon}|b_{p,n}(r)|$ is unbounded on $\R$, for every $\varepsilon > 0$, see Proposition \ref{prop:lower-bound-bpn}. In particular, infinitely of the functions $b_{p,n}(r)$ are not of rapid decay on $\R$.

\subsection{General notation and a few preliminary facts}\label{sec:preliminaries}
\subsubsection{Radial functions}\label{sec:notation-radial-near-origin}
A function $f$ on $\R^d$ is \emph{radial}, if $f(x) = f(y)$ for all vectors $x,y \in \R^d$ with the same Euclidean norm $|x| = |y|$. If $f$ is radial and $r \geq 0$ is a real number, we will sometimes abuse notation and denote also by $f(r)$ the common value of $f$ on the set $rS^{d-1} =\{x \in \R^d\,:\, |x|=r\}$. 

We denote by $\mathcal{S}(\R^d)$ the Schwartz space and by $\mathcal{S}_{\text{rad}}(\R^d)$ the subspace of radial Schwartz functions. We use the standard topology on these spaces. For later reference, we record the following  convenient lemma, which follows from Proposition 3.3 in \cite{Grafakos}. 
\begin{lemma}\label{lem:radial-extension-of-even}
For every $p \geq 1$, the assignment $f \mapsto (x \mapsto f(|x|))$ defines a continuous linear map $\mathcal{S}_{\text{rad}}(\R) \rightarrow \mathcal{S}_{\text{rad}}(\R^p)$.
\end{lemma}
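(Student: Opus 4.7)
The strategy is to represent $F(x) := f(|x|)$ as a smooth function of $|x|^2$ and then control its Schwartz seminorms on $\R^p$ by those of $f$ on $\R$.

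\emph{Reduction to $F(x) = h(|x|^2)$.} Since $f \in \mathcal{S}_{\text{rad}}(\R)$ is even and smooth, the classical Whitney-type factorization (which is the content of, or an immediate consequence of, the cited Proposition 3.3 of \cite{Grafakos}) yields a unique smooth function $h : \R \to \C$ with $f(t) = h(t^2)$. Then $F(x) = h(|x|^2)$ is smooth on all of $\R^p$ because $x \mapsto |x|^2$ is a polynomial, and $F$ is manifestly radial.

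\emph{Chain-rule expansion and decay for $|x| \geq 1$.} For a multi-index $\beta$ with $|\beta| = m$, iterating the chain and product rules gives
\[
\partial^\beta F(x) = \sum_{k = \lceil m/2 \rceil}^{m} Q_{k, \beta}(x)\, h^{(k)}(|x|^2),
\]
where each $Q_{k, \beta}$ is a polynomial in $x$ of degree $2k - m$. By induction on $k$, starting from $f'(t) = 2t\, h'(t^2)$, one obtains for $t > 0$
\[
h^{(k)}(t^2) = \sum_{j = 1}^{k} c_{k, j}\, t^{j - 2k}\, f^{(j)}(t).
\]
Substituting and using $|Q_{k,\beta}(x)| \lesssim |x|^{2k - m}$ for $|x| \geq 1$ bounds $|\partial^\beta F(x)|$ by a constant times $\sum_{j=1}^m |x|^{j - m} |f^{(j)}(|x|)|$. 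Multiplying by $(1 + |x|)^N$ then shows that, in this regime, every Schwartz seminorm of $F$ is controlled by finitely many Schwartz seminorms of $f$.

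\emph{Main obstacle: control near the origin, and conclusion.} The preceding expression exhibits apparent singularities in $|x|^{j - 2k}$ that must be cancelled by the vanishing at $0$ of the odd derivatives of $f$. To make this quantitative, Taylor's theorem with integral remainder applied to $f$ at $0$ gives
\[
f(t) = \sum_{\ell = 0}^{K - 1} \frac{f^{(2\ell)}(0)}{(2\ell)!}\, t^{2\ell} + t^{2K}\, S_K(t),
\]
where $S_K$ is even, smooth, and has all derivatives bounded on $[-1,1]$ in terms of $\sup_t |f^{(2K + j)}(t)|$ for small $j$. Setting $t = \sqrt{s}$ and applying the Whitney factorization once more to $S_K$ produces a smooth function $g_K$ on $\R$ with $h(s) = P_K(s) + s^K g_K(s)$, where $P_K$ is a polynomial in $s$. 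Choosing $K$ larger than $|\beta|$ then controls $\sup_{s \in [0, 1]} |h^{(k)}(s)|$, and therefore $\sup_{|x| \leq 1} |\partial^\beta F(x)|$, by finitely many Schwartz seminorms of $f$. Combining the two regimes yields, for every Schwartz seminorm $\rho$ on $\R^p$, a finite sum $\sigma$ of Schwartz seminorms on $\R$ with $\rho(F) \leq \sigma(f)$, which proves simultaneously that $F \in \mathcal{S}_{\text{rad}}(\R^p)$ and that the map $f \mapsto F$ is continuous.
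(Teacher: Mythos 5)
The paper does not prove this lemma itself; it cites Proposition~3.3 of Grafakos--Teschl \cite{Grafakos} and notes that the argument there rests on Whitney's result that a smooth even $\phi$ factors as $\phi(r)=w(r^2)$ with $w$ smooth. Your sketch is therefore a self-contained alternative, and its outline is the right one: factor $f(t)=h(t^2)$ via Whitney, expand $\partial^\beta\big(h(|x|^2)\big)=\sum_{k}Q_{k,\beta}(x)\,h^{(k)}(|x|^2)$ with $Q_{k,\beta}$ homogeneous of degree $2k-|\beta|$, use $h^{(k)}(t^2)=\sum_{j=1}^k c_{k,j}\,t^{j-2k}f^{(j)}(t)$ to get decay for $|x|\geq 1$, and treat the origin by Taylor's theorem. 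The $|x|\geq 1$ regime is correct as written.

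The near-origin step, however, has a gap. You apply Whitney twice: once to $f$ to get $h$, and again to the Taylor remainder $S_K$ to get a smooth $g_K$ with $h(s)=P_K(s)+s^K g_K(s)$, and then assert that $\sup_{[0,1]}|h^{(k)}|$ is controlled ``by finitely many Schwartz seminorms of $f$.'' But Whitney's theorem as ordinarily stated is qualitative: it gives \emph{existence} of $g_K$ with $g_K(t^2)=S_K(t)$, not a bound on $\sup_{[0,1]}|g_K^{(i)}|$ in terms of derivatives of $S_K$. Without such a quantitative version, the chain from $f$ through $S_K$ to $g_K$ to $h^{(k)}$ does not yield the seminorm estimate $\rho(F)\leq\sigma(f)$ you announce, so continuity of the map is not actually established by what you wrote. (Membership $F\in\mathcal{S}_{\text{rad}}(\R^p)$ is fine, since $h^{(k)}$ is continuous, hence bounded, on $[0,1]$; one could then rescue continuity abstractly via the closed graph theorem for Fr\'echet spaces, but that is a different argument.) A cleaner fix avoids the second Whitney application entirely: in the identity $h^{(k)}(t^2)=\sum_{j=1}^k c_{k,j}\,t^{j-2k}f^{(j)}(t)$, Taylor-expand each $f^{(j)}$ at $0$ to order $2k-j$ with integral remainder. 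The coefficients of the negative powers of $t$ are multiples of $f^{(\ell)}(0)$ for various $\ell\leq 2k-1$, and they must vanish because $h^{(k)}(t^2)$ stays bounded as $t\to 0^{+}$ (indeed they vanish by parity when $\ell$ is odd); what remains is then manifestly bounded on $(0,1]$ by $\sup_{[0,1]}|f^{(\ell)}|$ for $\ell\leq 2k$. This Taylor-with-integral-remainder mechanism is exactly what the paper deploys in the proof of part~(i) of Proposition~\ref{prop:lifts} to control the analogous near-origin singularity for the maps $L_u^p$.
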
 
The proof of Proposition 3.3 in \cite{Grafakos} uses an old result of Hassler Whitney \cite{Whitney}, asserting that for every smooth even function $\phi: \R \rightarrow \C$ there exists a smooth function $w : \R \rightarrow \C$ such that $\phi(r) = w(r^2)$ for all $r \in \R$. As a consequence, we see that  for every $p \geq 1$, the assignment $\phi \mapsto (x \mapsto \phi(|x|))$ gives a well-defined linear map $C_{\text{rad}}^{\infty}(\R) \rightarrow C_{\text{rad}}^{\infty}(\R^p)$.
\subsubsection{Fourier transforms}\label{sec:notation-fourier-transform}
Given an integrable function $f: \R^d \rightarrow \C$ we denote by $ \mathcal{F}(f) = \hat{f}$  its Fourier transform, which we normalize by $\hat{f}(\xi) = \int_{\R^d}{f(x) e^{-2 \pi i x \cdot \xi}dx}$, where $x \cdot \xi$ denotes the Euclidean inner product of $x, \xi \in \R^d$. We will sometimes compare the Fourier transform of functions on $\R^d$ and radial functions on $\R^{d+2m}$, but context and notation should make it clear in which dimension the Fourier transform is computed.
\subsubsection{Square roots}\label{sec:notation-square-roots}
We denote by $\H = \{z \in \C \,:\, \imag(z) > 0\}$ the complex upper half plane. Given $k \in \C$, we define $(- i \tau)^{k} = (\tau/i)^k = \exp{( k \log{(\tau/i)})}$, where we choose the holomorphic function $\tau \mapsto \log(\tau/i)$ in such a way that it is value at $ \tau = i$ is $0$.
\subsubsection{Two-periodic holomorphic functions}\label{sec:two-periodic-hol-functions}
We denote the open unit disc by $\mathbb{D} = \{ w \in \C\,:\,|w| < 1 \}$ and by $\mathbb{D}^{\times}= \mathbb{D} \setminus \{ 0 \}$ the punctured open unit disc. Given a two-periodic holomorphic function $F : \H \rightarrow \C$, write $F_{\text{disc}}: \mathbb{D}^{\times} \rightarrow \C$ for the unique holomorphic function satisfying $F_{\text{disc}}(e^{\pi i z}) = F(z)$ for all $z \in\H$. Then $F$ admits a Fourier--Laurent expansion $F(z) = \sum_{n \in \Z}{\widehat{F}(n) e^{\pi i n z}}$ with Fourier--Laurent coefficients given by 
\[
\widehat{F}(n) = \frac{1}{2}\int_{iy_0 + [-1,1]}{F(x + iy_0) e^{- \pi in x}dx} = \frac{1}{2\pi i}\int_{|w|= \delta}{F_{\text{disc}}(w) \frac{dw}{w^{n+1}}},
\]
for any $y_0 > 0$ and any $\delta \in (0,1)$. We say that $F$ is meromorphic (holomorphic, vanishes) \emph{at infinity} if $F_{\text{disc}}$ is meromorphic  (holomorphic, vanishes) at zero.

\subsubsection{Gaussians}\label{sec:notation-gaussians}
For $p \geq 1$ and $z \in \H$ we denote by $G_p(z) \in \mathcal{S}_{\text{rad}}(\R^p)$ the function defined by $G_p(z)(x) = G_p(z,x)= e^{\pi i z |x|^2}$ for $x \in \R^p$ and we refer to it as the \emph{Gaussian} (with parameter $z$). A proof of the following important Lemma can be found in \cite[Lemma 2.2]{CKMRV-E8-leech} and will be used in the proof of Proposition \ref{prop:lifts} and in \S \ref{sec:generating-series-and-functional-equations}.
\begin{lemma}\label{lem:density-gaussians-and-continuity}
The set $\{G_p(z)\,:\, z \in \H\}$ spans a dense subspace of $\mathcal{S}_{\text{rad}}(\R^p)$.
\end{lemma}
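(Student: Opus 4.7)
The plan is to invoke Hahn--Banach duality: it suffices to show that any continuous linear functional $T$ on $\mathcal{S}_{\text{rad}}(\R^p)$ annihilating every Gaussian $G_p(z)$, $z \in \H$, must be zero. Consider the function $h : \H \to \C$ defined by $h(z) := \langle T, G_p(z)\rangle$. First I would verify that $h$ is holomorphic on $\H$, which reduces to showing that $z \mapsto G_p(z)$ is a holomorphic $\mathcal{S}_{\text{rad}}(\R^p)$-valued map: the difference quotient $(G_p(z+w)-G_p(z))/w$ converges to $\pi i |x|^2 G_p(z)$ in the Schwartz topology as $w \to 0$, thanks to the Gaussian decay enforced by $\imag(z) > 0$.

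By hypothesis $h \equiv 0$ on $\H$; in particular, $h(iy) = \langle T, e^{-\pi y |\cdot|^2}\rangle = 0$ for every $y > 0$. I would then pass to a one-variable problem via the substitution $u = |x|^2$. Using the identification of $\mathcal{S}_{\text{rad}}(\R^p)$ with a suitable space of functions on $[0,\infty)$ (through $f(x) = \tilde{f}(|x|^2)$, justified by Whitney's theorem and Lemma~\ref{lem:radial-extension-of-even}), the functional $T$ descends to a tempered distribution $\nu$ on $[0,\infty)$ determined by $\langle \nu, \psi\rangle = \langle T, \psi(|\cdot|^2)\rangle$ for suitable test functions $\psi$. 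The vanishing condition becomes
\[
\langle \nu, e^{-\pi y u}\rangle = 0 \qquad \text{for all } y > 0,
\]
i.e., the Laplace transform of $\nu$ is identically zero on the positive real axis.

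To conclude, I would invoke uniqueness of the Laplace transform for tempered distributions on $[0,\infty)$: the map $y \mapsto \langle \nu, e^{-\pi y u}\rangle$ extends to a holomorphic function on the right half-plane and vanishes on a ray, hence is identically zero; this forces $\nu = 0$ and therefore $T = 0$. The main obstacle I anticipate is the rigorous reduction to one variable: one must construct the pushforward $\nu$, check that it is well-defined, and verify that it is tempered with respect to the topology induced from $\mathcal{S}_{\text{rad}}(\R^p)$. This hinges on the Whitney-type identification near the origin, after which the Laplace transform step is standard.
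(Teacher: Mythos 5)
The paper does not supply its own argument for this lemma; it cites \cite[Lemma~2.2]{CKMRV-E8-leech}. Your Hahn--Banach reduction is the standard and correct approach to this kind of density statement, and the two key ingredients you identify (holomorphy of $z\mapsto G_p(z)$ as an $\mathcal{S}_{\text{rad}}(\R^p)$-valued map, and vanishing of the Laplace transform of the pushed-forward functional) are both correct. The Whitney/Grafakos identification you invoke is exactly what Lemma~\ref{lem:radial-extension-of-even} and the surrounding discussion in \S\ref{sec:notation-radial-near-origin} provide, so the pushforward $\nu$ is well-defined and continuous, and the Laplace transform uniqueness for continuous functionals on $\mathcal{S}([0,\infty))$ is a legitimate way to finish.

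That said, you can sidestep the pushforward and the Laplace transform uniqueness entirely, which removes what you flag as the main technical obstacle. Once you know $h(z)=\langle T, G_p(z)\rangle$ is holomorphic on $\H$ and identically zero, all of its derivatives at $z=i$ vanish; and the $\mathcal{S}_{\text{rad}}(\R^p)$-valued derivative $\partial_z^k G_p(z)\big|_{z=i}$ is $x\mapsto (\pi i |x|^2)^k e^{-\pi|x|^2}$. Hence $T$ annihilates $P(|x|^2)e^{-\pi|x|^2}$ for every polynomial $P$ in one variable. These span a dense subspace of $\mathcal{S}_{\text{rad}}(\R^p)$ -- this is precisely the density of polynomial-times-Gaussian functions invoked in the proof of Proposition~\ref{prop:lifts} (citing \cite[Ch.~3, Ex.~6]{Howe-Tan}), restricted to the radial subspace by averaging over $\Or(p)$ -- so $T=0$. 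This variant stays entirely in the $\R^p$ picture and uses nothing beyond the holomorphy you already established. Either route is valid; yours trades the Whitney reduction and Laplace uniqueness against the density of the radial Hermite/Laguerre system.
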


\section{Harmonic analysis part}\label{sec:harmonic-analysis}
The goal of this section to write down an interpolation formula for Schwartz functions on $\R^d$, assuming that one has interpolation formulas for \emph{radial} Schwartz functions in every dimension $p \in \{d +2m \,:\, m \in \N_0\}$.

To fix notation, we first recall some basic definitions and facts about harmonic polynomials and spherical harmonics. All of these facts can be found \cite[ch. 3]{Stein-Weiss} and \cite[ch. 5]{AxlerBourdonRamey}. Let $d \geq 2$. For each $m \in \N_0$, let $\mathcal{H}_m(\R^d)$ denote the space of all complex-valued harmonic polynomial functions on $\R^d$, which are homogeneous of degree $m$. We call these harmonic polynomials (of degree $m$) for short. Let $\mathcal{H}_m(S^{d-1})$ denote the space of all restrictions $u|_{S^{d-1}}$ of $u  \in \mathcal{H}_m(\R^d)$. It is the space of spherical harmonics of eigenvalue $-m(d-2+m)$ for the spherical Laplacian and carries an $L^2$-inner product structure, coming from the probability surface measure on $S^{d-1}$. Via restriction, the spaces $\mathcal{H}_m(\R^d)$ and $\mathcal{H}_m(S^{d-1})$ are by definition isomorphic and we will freely use this isomorphism to give meaning to ``orthonormal basis" $\mathcal{B}_m \subset \mathcal{H}_m(\R^d)$ or to make sense of values $u(x)$ for $x \in \R^d$, even when $u$ was initially declared to belong to $\mathcal{H}_m(S^{d-1})$. We have
\begin{equation}\label{eq:dimension-formula-harmonic-poly}
\dim_{\C}{(\mathcal{H}_m(\R^d))} = \binom{d+m-1}{d-1}- \binom{d+m-3}{d-1} \sim  \frac{2}{(d-2)!} m^{d-2},
\end{equation}
as $m \rightarrow \infty$. For each point $\omega \in S^{d-1}$ and each $m \in \N_0$, let $\zeta \mapsto Z_m^d(\zeta, \omega)$ denote the zonal spherical harmonic of degree $m$ with pole $\omega$, characterized by the property  
\begin{equation}\label{eq:defining-property-zonal}
\int_{S^{d-1}}{u(\zeta) \overline{Z_m^d(\zeta, \omega)} d\zeta} = u(\omega) \qquad \text{for all} \quad u \in \mathcal{H}_m(S^{d-1}).
\end{equation}
For \emph{any} orthonormal basis  $\mathcal{B}_m \subset \mathcal{H}_{m}(S^{d-1})$, we have
\begin{equation}\label{eq:zonal-spherical-in-terms-of-basis}
Z_{m}^d(\zeta, \omega) = \sum_{u \in \mathcal{B}_m}{u(\zeta) \overline{u(\omega)}}
\end{equation}
and for each $\omega \in S^{d-1}$,  one has 
\begin{equation}\label{eq:zonal-on-diagonal}
Z_m^d(\omega, \omega) = \| Z_{m}^d( \cdot, \omega)\|_{L^2(S^{d-1})}^2 = \dim{\mathcal{H}_m(\R^d)}.
\end{equation}
It follows from \eqref{eq:defining-property-zonal}, \eqref{eq:zonal-on-diagonal} and the Cauchy--Schwarz inequality that 
\begin{equation}\label{eq:sup-norm-dimension-bound}
\sup_{\zeta \in S^{d-1}}{|u(\zeta)|} \leq \| u \|_{L^2(S^{d-1})} \left( \dim{\mathcal{H}_m(\R^d)} \right)^{1/2},
\end{equation}
for every $u \in \mathcal{H}_m(S^{d-1})$. We will also use the fact that every homogeneous polynomial  $P : \R^d \rightarrow \C$ of degree $m$ can be (uniquely) written as 
\begin{equation}\label{eq:decompositoin-general-poly-into-harmonics}
P(x) = \sum_{0 \leq j \leq m/2}{|x|^{2j}u_{j}(x)}, \quad \text{for some } \quad u_{j} \in \mathcal{H}_{m-2j}(\R^d).
\end{equation} 
The next definition and proposition will generalize the discussion surrounding \eqref{eq:simple-definition-lift} in the introduction. For any fixed $u \in \mathcal{H}_m(\R^d)$, we give here an explicit inverse  of the natural map $\mathcal{S}_{\text{rad}}(\R^{d+2m}) \rightarrow u \mathcal{S}_{\text{rad}}(\R^d)$ (up to constant multiples), which intertwines the Weil representations of a two-fold covering group of $\SL_2(\R)$ acting on the respective Schwartz spaces (see \cite[Ch. 3]{Howe-Tan}). This is closely related to  Bochner's periodicity relations and the transformation laws for harmonic theta series, see  \cite[Ch. 3, Ch. 4]{Howe-Tan} and \cite{bochner-theta-relations}. The result may be known in some equivalent form, but we include our proof to keep the presentation self-contained.
\begin{definition}\label{def:lifts}
Let $d \geq 2$, $m \in \N_0$ and $u \in \mathcal{H}_m(\R^d)$.  For each $f \in C^{\infty}(\R^d)$ and each $p \in \N$ we define the radial function $L_{u}^p f : \R^p \rightarrow \C$ by
\begin{align*}
L_{u}^{p}f(x) &= \int_{S^{d-1}}{f(|x|\zeta) \overline{u(\zeta/|x|)}d\zeta} \quad  \text{ for  }x  \in\R^{p} \setminus \{0\},\\
  L_{u}^{p}f(0) &= \sum_{|\alpha|=m}{\frac{(\partial^{\alpha}f)(0)}{\alpha!}\int_{S^{d-1}}{\zeta^{\alpha}\overline{u(\zeta)}d\zeta}}.
\end{align*}
\end{definition}

\begin{proposition}\label{prop:lifts}
With notations as in Definition \ref{def:lifts}, the following holds.
\begin{enumerate}[(i)]
\item Each $L_u^{p}f$ is a smooth radial function on $\R^p$.
\item The assignment $f \mapsto L_u^{p}f$ defines a continuous linear map $\mathcal{S}(\R^{d}) \rightarrow \mathcal{S}_{\text{rad}}(\R^p)$.
\item For all $f \in \mathcal{S}(\R^d)$ we have $\mathcal{F}(L_{u}^{d+2m}f) =  i^m L_u^{d+2m}\mathcal{F}(f)$.
\end{enumerate}
\end{proposition}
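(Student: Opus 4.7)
The key observation is that by the homogeneity $u(\zeta/|x|) = |x|^{-m} u(\zeta)$, Definition \ref{def:lifts} simplifies for $x \in \R^p \setminus \{0\}$ to
\[
L_u^p f(x) = |x|^{-m} c_u(|x|), \qquad c_u(r) := \int_{S^{d-1}} f(r\zeta) \overline{u(\zeta)} d\zeta,
\]
which shows $L_u^p f$ depends on $x$ only through the scalar $|x|$. The plan is to deduce (i) and (ii) by proving that $r \mapsto c_u(r)/r^m$ extends to a smooth even function $w$ on $\R$ whose value at $0$ agrees with that prescribed in Definition \ref{def:lifts}, so that Lemma \ref{lem:radial-extension-of-even} promotes $w$ to a smooth radial Schwartz function on $\R^p$, continuously in $f$.

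Smoothness of $c_u$ on $(0, \infty)$ is clear by differentiation under the integral sign, and the identity $c_u(-r) = (-1)^m c_u(r)$, coming from $u(-\zeta) = (-1)^m u(\zeta)$ and invariance of the surface measure, extends $c_u$ smoothly to all of $\R$ with the parity of $m$. The crucial step is that $c_u$ vanishes to order at least $m$ at the origin: differentiating under the integral and evaluating at $r = 0$ expresses $c_u^{(k)}(0)$ as a linear combination of integrals $\int_{S^{d-1}} \zeta^\alpha \overline{u(\zeta)} d\zeta$ with $|\alpha| = k$, and each such integral vanishes for $k < m$ by orthogonality of $\mathcal{H}_{|\alpha|}(S^{d-1})$ against $\mathcal{H}_m(S^{d-1})$, and more generally whenever $k \not\equiv m \pmod 2$ by the decomposition \eqref{eq:decompositoin-general-poly-into-harmonics}. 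Hadamard's lemma then writes $c_u(r) = r^m w(r)$ for a smooth $w$, which is even by the above parity, with $w(0) = c_u^{(m)}(0)/m!$ matching exactly the constant assigned to $L_u^p f(0)$ in Definition \ref{def:lifts}. Continuity of $f \mapsto w$ from $\mathcal{S}(\R^d)$ into $\mathcal{S}_{\text{rad}}(\R)$ follows by bounding the Schwartz seminorms of $w$ directly from the integral representations of $c_u^{(k)}$ (for large $r$) and from the Taylor remainder estimate (near $r = 0$); composing with Lemma \ref{lem:radial-extension-of-even} then gives (i) and (ii).

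For (iii), the plan is to apply the Bochner--Hecke identity for the Fourier transform on $\R^d$ of products of a solid spherical harmonic with a radial profile. The $u$-th spherical harmonic component of $f$ is
\[
f_u(x) = u(x) \phi(|x|), \qquad \phi(r) = c_u(r)/r^m,
\]
and the radial function $y \mapsto \phi(|y|)$ on $\R^{d+2m}$ is precisely $L_u^{d+2m} f$. Bochner--Hecke gives $\widehat{f_u}(\xi) = i^{-m} u(\xi) \Psi(|\xi|)$, where $\Psi(|\xi|) = \mathcal{F}_{\R^{d+2m}}(L_u^{d+2m} f)(\xi)$; this is exactly where the dimensional shift from $d$ to $d+2m$ enters, via the shift of the Bessel index by $m$. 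On the other hand, taking the $u$-component of $\hat f$ directly from its spherical harmonic expansion gives $(\hat f)_u(\xi) = u(\xi) L_u^{d+2m} \hat f(\xi)$. Since $\widehat{f_u} = (\hat f)_u$, equating these expressions and dividing by $u(\xi)$ on the open dense set $\{u \neq 0\} \subset \R^{d+2m}$, then extending by continuity, yields $\mathcal{F}(L_u^{d+2m} f) = i^m L_u^{d+2m} \hat f$.

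The main obstacle is the smoothness of $L_u^p f$ across the origin, i.e., the precise cancellation of the $|x|^{-m}$ factor against the order-$m$ vanishing of $c_u$ at $r = 0$. The orthogonality of $u$ against monomials of degree $< m$ or of incompatible parity is the mechanism that produces this vanishing; once secured, the rest is routine: (ii) via Schwartz seminorm estimates plus Lemma \ref{lem:radial-extension-of-even}, and (iii) via a direct translation of Bochner--Hecke into the language of the $L$-operators.
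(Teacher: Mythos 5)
Your argument for parts (i) and (ii) is essentially the paper's, repackaged. The paper expands $f$ via Taylor's theorem with an explicit integral remainder, integrates against $\overline{u}$, and uses the decomposition \eqref{eq:decompositoin-general-poly-into-harmonics} together with orthogonality of spherical harmonics to kill every term of degree $k<m$ or $k\not\equiv m\pmod 2$; the resulting sum \eqref{eq:expression-lift-by-taylor} is precisely the Hadamard-type factorization you invoke, with the order-$m$ vanishing and the parity of $c_u$ playing the same role. Both proofs then pass from $\R$ to $\R^p$ through Lemma \ref{lem:radial-extension-of-even}. As you note, the place that needs genuine work is the Schwartz seminorm estimate for $w$ both near the origin and at infinity; that is where the paper spends most of part (ii), and your sketch (``from the integral representations of $c_u^{(k)}$'' and ``from the Taylor remainder estimate'') is the same two-region split.

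For (iii) you take a somewhat different route. The paper reduces, by continuity of $L_u^{d+2m}$ from (ii) and the density Lemma \ref{lem:density-gaussians-and-continuity}, to $f(x)=u_0(x)e^{\pi i z|x|^2}$ with $u_0$ a solid harmonic, and then verifies the identity by an explicit Gaussian computation with the Hecke--Funk formula. You instead invoke the Bochner--Hecke relation for arbitrary Schwartz radial profiles and match spherical-harmonic components of $\hat f$. This is valid, but one step deserves justification rather than bare assertion: the equality $\widehat{f_u}=(\hat f)_u$. This is not a direct consequence of $\SO(d)$-equivariance of the Fourier transform, since the projection onto the single basis element $u$ (as opposed to the full degree-$m$ isotypic piece) is not an intertwiner for $\SO(d)$; rather, it follows from Bochner applied to each $u'\in\mathcal{B}_m$ showing that $\widehat{f_{u'}}$ stays a $u'$-multiple of a radial function, hence the decomposition is preserved componentwise. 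Once that is spelled out, your argument for (iii) is complete and arguably more conceptual, while the paper's version is more self-contained (needing only the Gaussian case of Hecke--Funk plus density, which the paper has already established).
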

\begin{proof}
%
Fix $d \geq 2$, $m \geq 0$, $f \in C^{\infty}(\R^d)$ and $u \in \mathcal{H}_m(\R^d)$. We prove parts (i) and (ii) in the case $p = 1$, which will imply the general case by the discussion in \S \ref{sec:notation-radial-near-origin}. We therefore temporarily write $Lf(y) = L_u^1 f(y)$ for $y \in \R$. To start, recall that by Taylor's theorem we have, for every $x \in \R^d$ and every $K \in \N_0$,  
\[
f(x) = \sum_{k=0}^{K}{ \sum_{|\alpha|=k}{ \frac{(\partial^{\alpha} f)(0)}{\alpha!} x^{\alpha}}} + \sum_{|\alpha|=K+1}{\frac{K+1}{\alpha!} \int_{0}^{1}{(1-t)^K (\partial^{\alpha} f) (tx)  dt} \,  x^{\alpha}}.
\]
We specialize this to $x = |y|\zeta$, where $(y, \zeta) \in \R^{\times} \times S$ and take $K \geq m+1$. Then we integrate over $\zeta \in S$ against $\overline{u}(\zeta/ |y|)$ and use the decomposition \eqref{eq:decompositoin-general-poly-into-harmonics}, applied to monomials $P(x) =x^{\alpha}$, combined with orthogonality relations for spherical  harmonics, to obtain
\begin{equation}\label{eq:expression-lift-by-taylor}
Lf(y) = \sum_{\substack{k =m \\ k \equiv m (2)}}^{K}{|y|^{k-m} \sum_{|\alpha|=k}{ \frac{(\partial^{\alpha}f)(0)}{\alpha!} \int_{S}{\zeta^{\alpha} \overline{u(\zeta)} d\zeta}  }} + |y|^{K+1-m}R_K(y),
\end{equation}
with remainder term
\[
R_K(y)= \sum_{|\alpha|=K+1}{ \frac{K+1}{\alpha!} \int_{S}{ \int_{0}^{1}{      (1-t)^{K} (\partial^{\alpha} f)( |y| \zeta t) dt\, \overline{u(\zeta)} \zeta^{\alpha}  d\zeta}}}.
\]
The  first sum in \eqref{eq:expression-lift-by-taylor} is a polynomial in $y^2$, hence in $C_{\text{rad}}^{\infty}(\R)$. It therefore suffices to show that $y \mapsto |y|^{K+1-m}R_K(y)$ belongs to $C^{\ell(K)}(\R)$ in such a way that $\ell(K) \rightarrow \infty$ as $K \rightarrow \infty$.  To that end, we first check that on $\R^{\times}$, we have
\begin{align}
\frac{d^{j}}{d y^{j}}|y|^{c} &= (y/|y|)^{j} \frac{c!}{(c-j)!}|y|^{c-j} \qquad (0 \leq j \leq c), \label{eq:aux-derivative-formula-1}\\
\frac{d^{j}}{d y^{j}}  (\partial^{\alpha}f)(t\zeta |y|) &= t^{j}(y/|y|)^{j}\sum_{|\beta|= j}{(\partial^{\alpha+ \beta} f)(|y|t \zeta) \zeta^{\beta}}. \label{eq:aux-derivative-formula-2}
\end{align}
%
%
We now take $K$ of the form $K=m + 2N$ for $N \in \N$. Then we deduce from the Leibniz rule and the above formulas \eqref{eq:aux-derivative-formula-1}, \eqref{eq:aux-derivative-formula-2} that, for  $0 \leq j \leq N$, the derivative $ \frac{d^{j}}{dy^{j}}|y|^{K-m+1} R_K(y)$ is equal to $(y/|y|)^{j}= (y/|y|)^{j_1}(y/|y|)^{j_2}$, times
\begin{equation}\label{eq:derivative-remainder-near-origin}
 \sum_{j_1 +j_2 = j}{ a_{j_1,j_2} |y|^{2N+1-j_1}    \sum_{\substack{|\alpha| =K+1\\|\beta|=j_2} }{ \frac{K+1}{\alpha!} \int_{S}{ \int_{0}^{1}{t^{j_2}(1-t)^{K} (\partial^{\alpha + \beta}f)(t|y|\zeta)}       dt}\,\zeta^{\alpha + \beta} \overline{u(\zeta)} d \zeta} },
 \end{equation}
where $a_{j_1,j_2} = \tfrac{j!}{j_1! j_2 !} \tfrac{(2N+1)!}{ (2N+1-j_1)!}$. All of these computations hold for $y \in \R^{\times}$. We deduce that  $ \frac{d^{j}}{dy^{j}}|y|^{K-m+1} R_K(y) \rightarrow 0$, as $y \rightarrow 0$ on $\R^{\times}$ and that the relevant difference quotients at $y=0$ also tend to zero.

We now turn to part (ii), so assume that $f \in \mathcal{S}(\R^d)$ and still that $p = 1$. Fix integers $j,n \geq 0$ such that $n$ is even. Define
\[
A = \sup_{y \in [0,1]}{|(1+y^n) (Lf)^{(j)}(y)|}, \qquad B=\sup_{y \in [1,\infty)}{|(1+y^n) (Lf)^{(j)}(y)|}.
\]
It suffices to show that $A$ and $B$ can be bounded in terms of finitely many continuous semi-norms of $f$. Here, we also used that $(Lf)^{(j)}$ is either even or odd, to be able to restrict to non-negative arguments $y$, for convenience.

To estimate the term $A$, we again take $K = 2N+m$ with $j \leq N$. We then read off from \eqref{eq:expression-lift-by-taylor} that the $j$th derivative of the  polynomial $Lf(y)-|y|^{2N+1}R_K(y)$ has degree at most $2N- j$, and that its coefficients  are multiples of $\partial^{\alpha} f(0)$, with $|\alpha| \leq K$, so that the supremum over $y \in [0,1]$ of that derivative may be bounded in terms of finitely many continuous semi-norms of $f$. For the remainder term we note that inside the integrals appearing in \eqref{eq:derivative-remainder-near-origin}, the vectors $t|y| \zeta \in \R^d$ have Euclidean norm at most $1$ for all triples $(t,y, \zeta) \in [0,1]^2 \times S$ under consideration, so that we can bound these integrals in terms of suprema of partial derivatives of $f$, over the closed unit ball in $\R^d$.

To estimate the term $B$, we compute directly from the definition, using the Leibniz rule as well as \eqref{eq:aux-derivative-formula-2} (with $\alpha =0, t = 1$), that, for $m \geq 1$, $y \geq 1$,
\begin{equation}\label{eq:derivative-lift-away-from-origin}
(Lf)^{(j)}(y) = \sum_{j_1 +j_2 = j}{ b_{j, j_1,j_2} y^{-m-j_1}\sum_{|\beta|=j_2}{\int_{S}{(\partial^{\beta}f)(y \zeta) \zeta^{\beta} \overline{u(\zeta)} d\zeta}}},
\end{equation}
where $b_{j, j_1,j_2} =\tfrac{j!}{j_1! j_2!} \tfrac{ (-1)^{j_1} (m+j_1-1)!}{ (m-1)!}$. If $m = 0$, the formula for $(Lf)^{(j)}$ is simpler (namely only the inner sum in \eqref{eq:derivative-lift-away-from-origin} with $j_2$ replaced by $j$ and $\overline{u(\zeta)}$ replaced by $1$). We may now multiply \eqref{eq:derivative-lift-away-from-origin} with $1+y^{n}$, and use that 
\[
|(1+y^{n})(\partial^{\beta}f)(y \zeta)| \leq \sup_{ |x| \geq 1}{(1+|x|^n)|\partial^{\beta}f(x)|},
\]
using $y^n = |y \zeta|^n$ for $\zeta \in S = S^{d-1}$ for the inequality here. Thus, $B$ can be bounded in terms of $f$ as required.

We turn to part (iii) in which we assume that $p = d+2m$ and that $f \in \mathcal{S}(\R^d)$. By part (ii) and continuity of the Fourier transform, we may assume that $f$ belongs to a (generating set of a) dense subspace of $\mathcal{S}(\R^d)$. It thus suffices to consider Schwartz functions $f$ of the form  $f(x) = u_0(x) e^{\pi i z|x|^2}$, for some $u_0 \in \mathcal{H}_{m_0}(\R^d)$, $m_0 \in \N_0$ and $z \in \H$, because:
\begin{itemize}
\item the linear span of all Schwartz functions of the form $x \mapsto P(x) e^{- \pi |x|^2}$, where $P: \R^d \rightarrow \C$ is a polynomial function, is dense in $\mathcal{S}(\R^d)$, see \cite[Ch.3, Ex. 6]{Howe-Tan},
\item by \eqref{eq:decompositoin-general-poly-into-harmonics}, every polynomial $P$ on $\R^d$, is a sum of products of a harmonic polynomial with an even power of the Euclidean norm,
\item as the parameter $z$ traverses the upper half plane $\H$, the Gaussians $e^{\pi i z|x|^2}$ span a dense subspace of  $\mathcal{S}_{\text{rad}}(\R^d)$, see Lemma \ref{lem:density-gaussians-and-continuity}.
\end{itemize}
Under this assumption on $f$, we have, by definition,
\[
L_u^{d+2m}f (y) = \int_{S}{e^{\pi i z \|y| \zeta|^2} u_0(|y|\zeta) \overline{u(\zeta/|y|)} d \zeta} = e^{\pi i z |y|^2} |y|^{m_0-m} \langle u_0,u  \rangle_{L^2(S)},
\]
for all $y \in \R^{d+2m} \setminus \{0 \}$. If $m_0 \neq m$, then  $\langle u_0,u  \rangle_{L^2(S)} =0$, by orthogonality. If $m_0 = m$, then
\begin{equation}\label{eq:fourier-transform-of-lift}
\mathcal{F}(L_u^{d+2m}f)(\eta) = (-i z)^{-\frac{d+2m}{2}} e^{\pi i (-1/z) |\eta|^2} \langle u_0, u \rangle_{L^2(S)},
\end{equation}
for every $\eta \in \R^{d+2m}$. On the other hand, the Hecke-Funk identity, which follows from \cite[Thm 3.4]{Stein-Weiss} by homogeneity and analyticity, says that for all $\xi \in \R^d$, one has
\[
\hat{f}(\xi) = (-i)^{m_0} (-i z)^{-\frac{d+2m_0}{2}} u_0(\xi) e^{\pi i (-1/z) |\xi|^2}.
\]
From Definition \ref{def:lifts} we see 
\begin{align}
(L_u^{d+2m}\hat{f})(\eta) &= (-i)^{m_0} (-i z)^{-\frac{d+2m_0}{2}} \int_{S}{e^{\pi i (-1/z)||\eta| \zeta|^2} u_0(|\eta|\zeta) \overline{u(\zeta/|\eta|)} d \zeta} \notag \\
&= (-i)^{m_0} (-i z)^{-\frac{d+2m_0}{2}} e^{\pi i z |\eta|^2} |\eta|^{m_0-m} \langle u_0, u \rangle_{L^2(S)}, \label{eq:lift-of-fourier-transform}
\end{align}
for every $\eta \in \R^{d+2m} \setminus \{0 \}$. If $m_0 \neq m$, then this again is zero. Otherwise, by comparing \eqref{eq:fourier-transform-of-lift} with \eqref{eq:lift-of-fourier-transform} we obtain the formula claimed in (iii). 
\end{proof}
%
\begin{corollary}\label{cor:cheap-non-radial}
Let $d \geq 2$. Let $(r_n)_{n \in \N_0}$, $(\rho_n)_{n \in \N_0}$ be two sequences of non-negative real numbers. Suppose we are given, for each integer $p \in \{d +2m \,:\, m \in \N_0 \}$, each real number $r \geq 0$ and each $n \in \N_0$, two complex numbers $c_{p,n}(r), \tilde{c}_{p,n}(r)$ such that: for all $g \in \mathcal{S}_{\text{rad}}(\R^p)$ and all $y \in \R^p$, 
\[
g(y) = \sum_{n=0}^{\infty}{c_{p,n}(|y|) g(r_n)} + \sum_{n=0}^{\infty}{\tilde{c}_{p,n}(|y|) \hat{g}(\rho_n)},
\] 
and both of these series converge (not necessarily absolutely). Then, for every $x \in \R^d$ and every $f \in \mathcal{S}(\R^d)$,
\begin{align}
f(x) = &\sum_{m=0}^{\infty}{  \sum_{n=0}^{\infty}{ \Big(  c_{d+2m,n}(|x|) \int_{S}{f(r_n \zeta) Z_{m}^d(x, \zeta/ r_n) d\zeta} }} \notag \\  
& \qquad \qquad \quad  + i^m \tilde{c}_{d+2m,n}(|x|) \int_{S}{\hat{f}(\rho_n\zeta) Z_{m}^d(x, \zeta/ \rho_n) d\zeta} \Big) ,  \label{eq:pre-interpolation-formula}
\end{align}
where, if $\rho_n=0$ or $r_n = 0$, the integrals are defined through Definition \ref{def:lifts}. The double series converges in the indicated order of summation and is such that $\sum_{m =0}^{\infty}{|(\dots)|} < \infty$.
\end{corollary}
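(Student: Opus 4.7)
The plan is to decompose $f$ into spherical--harmonic layers on the sphere of radius $|x|$, lift each layer to a radial Schwartz function on $\R^{d+2m}$ via the operator $L_u^{d+2m}$ of Definition \ref{def:lifts}, apply the hypothesized radial interpolation formula in dimension $p=d+2m$ to each lift, and reassemble the resulting expressions using the zonal reproducing identity \eqref{eq:zonal-spherical-in-terms-of-basis}.

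Fix $x \in \R^d$ and, for each $m \geq 0$, pick any $y \in \R^{d+2m}$ with $|y|=|x|$ (and $y=0$ if $x=0$). For an arbitrary orthonormal basis $\mathcal{B}_m \subset \mathcal{H}_m(S^{d-1})$, a direct computation from Definition \ref{def:lifts}, combined with the homogeneity of $u$ and the identity \eqref{eq:zonal-spherical-in-terms-of-basis}, yields
\[
\sum_{u \in \mathcal{B}_m} u(x)\, L_u^{d+2m}f(y) \;=\; \int_{S} f(|x|\zeta)\, Z_m^d(x, \zeta/|x|)\, d\zeta,
\]
which is precisely the $m$-th $L^2(S^{d-1})$-orthogonal projection of the smooth function $\zeta \mapsto f(|x|\zeta)$, evaluated at $x/|x|$. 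Summing over $m$ reproduces $f(x)$ with absolute and uniform convergence, since spherical--harmonic expansions of smooth functions on $S^{d-1}$ converge rapidly. Next I apply the hypothesized radial formula to $L_u^{d+2m}f \in \mathcal{S}_{\text{rad}}(\R^{d+2m})$ at the point $y$, and use Proposition \ref{prop:lifts}(iii) to rewrite $\mathcal{F}(L_u^{d+2m}f)(\rho_n) = i^m L_u^{d+2m}\hat f(\rho_n)$. Multiplying by $u(x)$, summing over the finite set $\mathcal{B}_m$, and pushing this finite sum inside the integrals defining the lifts converts the coefficients, via \eqref{eq:zonal-spherical-in-terms-of-basis}, into the desired kernels $Z_m^d(x,\zeta/r_n)$ and $Z_m^d(x,\zeta/\rho_n)$ whenever the nodes are positive; when a node is zero, the corresponding integral is re-interpreted through Definition \ref{def:lifts}, as stipulated in the statement. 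Combining this with the $m$-summation above produces \eqref{eq:pre-interpolation-formula}.

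The main obstacle is justifying the convergence claims. Since $\mathcal{B}_m$ is finite, the interchange of $\sum_{u \in \mathcal{B}_m}$ with the $n$-series is harmless, so convergence of the inner $n$-series in \eqref{eq:pre-interpolation-formula} in the indicated order follows directly from the hypothesis. For the absolute outer summability $\sum_m |(\cdots)| < \infty$, note that the parenthesized quantity equals $\sum_{u \in \mathcal{B}_m} u(x)\, L_u^{d+2m}f(y)$, i.e.\ the $m$-th spherical--harmonic component of $\zeta\mapsto f(|x|\zeta)$ at $x/|x|$. Its modulus is controlled through \eqref{eq:sup-norm-dimension-bound} by $\|P_m f(|x|\cdot)\|_{L^2}\,(\dim \mathcal{H}_m(\R^d))^{1/2}$; since $\zeta \mapsto f(|x|\zeta)$ is smooth on $S^{d-1}$ and $\mathcal{H}_m(S^{d-1})$ is the eigenspace of the spherical Laplacian for the eigenvalue $-m(d-2+m)$, repeated integration against this Laplacian forces $\|P_m f(|x|\cdot)\|_{L^2}$ to decay faster than any inverse polynomial in $m$, and combined with the polynomial growth of $\dim\mathcal{H}_m$ from \eqref{eq:dimension-formula-harmonic-poly} this yields the required absolute convergence.
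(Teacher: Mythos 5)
Your proposal reproduces the paper's own proof essentially line by line: expand $f(|x|\cdot)$ in spherical harmonics, rewrite the $m$-th component as $\sum_{u\in\mathcal{B}_m}u(x)L_u^{d+2m}f(\iota_m(x))$, apply the hypothesized radial interpolation formula to each lift together with Proposition \ref{prop:lifts}(iii), and reassemble via \eqref{eq:zonal-spherical-in-terms-of-basis}, with absolute outer $m$-summability following from the rapid decay of the spherical-harmonic coefficients of a smooth function. The only cosmetic deviation is that the paper treats $x=0$ as a separate case at the end (only $m=0$ contributes), whereas you fold it in via $y=0$, which leaves the expression $\int_S f(|x|\zeta)Z_m^d(x,\zeta/|x|)\,d\zeta$ formally ill-defined at $x=0$ even though the underlying quantity $\sum_u u(0)L_u^{d+2m}f(0)$ is fine.
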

\begin{proof}
For every $m \geq 0$ we choose an orthonormal basis $\mathcal{B}_m \subset \mathcal{H}_m(S^{d-1})$ and we let $f \in \mathcal{S}(\R^d)$. Then for every $r \geq 0$, the function  $\omega \mapsto f(r\omega)$ is smooth on $S^{d-1}$, so that its $L^2$-expansion into spherical harmonics 
\begin{equation}\label{eq:general-point-wise-spectral-expansion-on-sphere}
f(r \omega) =  \sum_{m=0}^{\infty}{ \sum_{u \in \mathcal{B}_m}{ u(\omega) \int_{S}{f(r \zeta)  \overline{u(\zeta)}  d\zeta} }}
\end{equation}
converges pointwise absolutely and uniformly with respect to the sup-norm. Now let $x \in \R^{d} \setminus \{0\}$. In this proof, we write $\iota_m(x) = (x,0) \in \R^{d+2m}$ for the vector whose first $d$ coordinates are given by those of $x$ and whose last $2m$ coordinates are all zero. Since  \eqref{eq:general-point-wise-spectral-expansion-on-sphere} holds for $r =|x|$ and $\omega=x/|x|$ and since each $u \in \mathcal{B}_m$ is homogeneous of degree $m$, we obtain 
\begin{equation}\label{eq:general-expansion-of-f-via-lifts}
f(x)  =  \sum_{m=0}^{\infty}{  \sum_{u \in \mathcal{B}_m}{u(x) \int_{S}{f(|x| \zeta) \overline{u(\zeta/|x|)}  d\zeta}}} =\sum_{m=0}^{\infty}{  \sum_{u \in \mathcal{B}_m}{u(x) L_u^{d+2m}f(\iota_m(x))}},
\end{equation}
using Definition \ref{def:lifts}. Here, we could have embedded the vector $x$ also in any other space $\R^{p(m)}$ and  \eqref{eq:general-expansion-of-f-via-lifts} would be true with $L_u^{d+2m}f$ replaced by $L_u^{p(m)}f$. The point is that $p(m) = d+2m$ allows us to use part (iii) of Proposition \ref{prop:lifts}  and the assumption, giving
\begin{equation}\label{eq:interpolation-formula-radial-applied-to-lift}
 L_u^{d+2m}f(\iota_m(x)) = \sum_{n=0}^{\infty}{ \left( c_{d+2m,n}(|x|)L_u^{d+2m}f(r_n) +    \tilde{c}_{d+2m,n}(|x|)i^m L_u^{d+2m}\hat{f}(\rho_n)  \right)  }.
\end{equation}
Inserting \eqref{eq:interpolation-formula-radial-applied-to-lift} back into \eqref{eq:general-expansion-of-f-via-lifts} gives \eqref{eq:pre-interpolation-formula} (by recalling \eqref{eq:zonal-spherical-in-terms-of-basis}). As we assumed that $x \neq 0$, we still need to show that
\begin{align*}
f(0) &= \sum_{n=0}^{\infty}{ \left( c_{d,n}(0)\int_{S}{f(r_n \zeta) d\zeta  } + \tilde{c}_{d,n}(0)\int_{S}{\hat{f}(\rho_n \zeta) d\zeta  } \right)  }\\
&= \sum_{n=0}^{  \infty}{\left( c_{d,n}(0)L_1^{d}f(r_n) + \tilde{c}_{d,n}(0) \mathcal{F}(L_{1}^d f)(\rho_n) \right) },
\end{align*}
where $1$ stands for the constant polynomial $1$. But this identity holds by the assumed radial interpolation formula, applied to $L_1^d(f) \in \mathcal{S}_{\text{rad}}(\R^d)$ at the point zero.
\end{proof}
We record a further corollary of the general expansion in \eqref{eq:general-expansion-of-f-via-lifts} and part (iii) of Proposition \ref{prop:lifts}. It allows one to translate Fourier uniqueness results for radial functions in all dimensions, to corresponding uniqueness results for non-radial functions in a fixed dimension. It may be applicable to the generalization of the uniqueness results by J. Ramos and M. Sousa \cite{Ramos-Sousa} to radial functions in higher dimensions, as sketched in \S 5 of their paper. The statement of the corollary itself will not be used elsewhere in the paper, but might be relevant for future work. 
\begin{corollary}\label{cor:cheap-fourier-uniqueness}
Fix a dimension $d \geq 2$ and fix two subsets $R, \hat{R} \subset (0, \infty)$. Suppose that for all $p \in \{d + 2m \,:\, m \in \N_0 \}$ an all $f \in \mathcal{S}_{\text{rad}}(\R^p)$, the following implication holds
\begin{equation}\label{eq:uniquness-implicatoin}
 \left( f|_{\bigcup_{r \in R}{r S^{p-1}}} = 0 \; \text{ and } \; \hat{f}|_{\bigcup_{\rho \in \hat{R}}{\rho S^{p-1}}}  = 0 \right) \quad  \Longrightarrow \quad f = 0.
\end{equation}
Then the same implication holds for arbitrary $f \in \mathcal{S}(\R^p)$.
\end{corollary}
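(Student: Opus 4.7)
The plan is to turn the non-radial vanishing hypothesis on $f \in \mathcal{S}(\R^d)$ into a family of radial vanishing hypotheses, one for each harmonic polynomial $u$, in ever higher ambient dimension, and then invoke the assumed radial uniqueness theorem for each of them. The spectral expansion \eqref{eq:general-expansion-of-f-via-lifts} then recovers $f$ from these radial functions and forces $f$ to be zero.

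First, fix $f \in \mathcal{S}(\R^d)$ satisfying $f|_{rS^{d-1}} = 0$ for every $r \in R$ and $\hat{f}|_{\rho S^{d-1}} = 0$ for every $\rho \in \hat{R}$. Fix $m \in \N_0$ and a harmonic polynomial $u \in \mathcal{H}_m(\R^d)$. By part (ii) of Proposition \ref{prop:lifts}, the lift $L_u^{d+2m}f$ lies in $\mathcal{S}_{\text{rad}}(\R^{d+2m})$. I would then check the two required vanishing conditions for this lift. For any $y \in \R^{d+2m} \setminus \{0\}$ with $|y| = r > 0$, Definition \ref{def:lifts} and homogeneity of $u$ give
\[
L_u^{d+2m}f(y) = r^{-m} \int_{S^{d-1}}{f(r\zeta)\overline{u(\zeta)}\, d\zeta},
\]
which vanishes whenever $r \in R$, since $R \subset (0,\infty)$ and the integrand is identically zero by hypothesis. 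By part (iii) of Proposition \ref{prop:lifts} we have $\mathcal{F}(L_u^{d+2m}f) = i^m L_u^{d+2m}\hat{f}$, and the same computation applied to $\hat{f}$ shows that this function vanishes on $\rho S^{d+2m-1}$ for every $\rho \in \hat{R}$.

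Now I apply the standing assumption \eqref{eq:uniquness-implicatoin} in dimension $p = d + 2m$ to the radial Schwartz function $L_u^{d+2m}f$, to conclude that $L_u^{d+2m}f \equiv 0$ on all of $\R^{d+2m}$ (in particular at the origin, where we get no direct information from $R, \hat{R}$, since these are subsets of $(0,\infty)$). Choosing an orthonormal basis $\mathcal{B}_m \subset \mathcal{H}_m(S^{d-1})$ for each $m$, this vanishing holds for every basis element $u \in \mathcal{B}_m$ and every $m \geq 0$.

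Finally, I invoke the pointwise absolutely convergent expansion \eqref{eq:general-expansion-of-f-via-lifts}: for every $x \in \R^d \setminus \{0\}$,
\[
f(x) = \sum_{m=0}^{\infty}{\sum_{u \in \mathcal{B}_m}{u(x)\, L_u^{d+2m}f(\iota_m(x))}} = 0,
\]
since each term $L_u^{d+2m}f(\iota_m(x))$ vanishes. Continuity of $f$ then gives $f(0) = 0$ as well, so $f \equiv 0$. There is no real obstacle here; the only point that requires a moment's care is that $R, \hat{R}$ lie in $(0,\infty)$, but this causes no trouble because the radial uniqueness assumption automatically supplies vanishing of the lift at the origin, and the expansion \eqref{eq:general-expansion-of-f-via-lifts} is only needed away from $0$.
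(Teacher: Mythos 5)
Your proof is correct and follows essentially the same approach as the paper: lift $f$ via $L_u^{d+2m}$ to radial Schwartz functions in each higher dimension, check the two vanishing conditions using Definition \ref{def:lifts} and part (iii) of Proposition \ref{prop:lifts}, apply the assumed radial uniqueness to conclude each lift vanishes identically, and then recover $f(x)=0$ from the expansion \eqref{eq:general-expansion-of-f-via-lifts} together with continuity at the origin. The only difference is that you spell out the computation $L_u^{d+2m}f(y) = r^{-m}\int_{S^{d-1}} f(r\zeta)\overline{u(\zeta)}\,d\zeta$ explicitly, which the paper leaves implicit.
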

\begin{proof}
Suppose that $f \in \mathcal{S}(\R^d)$ vanishes on all spheres $r S^{d-1}$, $r \in R$ and that $\hat{f}$ vanishes on all spheres $\rho S^{d-1}$, $\rho \in \hat{R}$. Fix a nonzero point $x \in \R^d$ and aim to show that $f(x) = 0$ (which suffices by continuity). By \eqref{eq:general-expansion-of-f-via-lifts}, it suffices to show that for all $m \geq 0$ and $u \in \mathcal{H}_m(\R^d)$, the function $L_u^{d+2m}f \in \mathcal{S}_{\text{rad}}(\R^{d+2m})$ and its Fourier transform $i^m L_u^{d+2m}\hat{f}$ (using part (iii) of Proposition \ref{prop:lifts} here), vanish identically. By the assumption \eqref{eq:uniquness-implicatoin}, this is implied by the vanishing of these radial functions at all radii $r \in R$ and $\rho \in \hat{R}$ respectively. That in turn, follows directly from the definition of $L_{u}^{p}$ and our assumption on $f$.
\end{proof}

We conclude section \ref{sec:harmonic-analysis} with the following lemma giving bounds for the $L^2$-norm of derivatives of harmonic polynomials. It will be used in in the proof of Lemma \ref{lem:A-inf-summands-bounds} below.
\begin{lemma}\label{lem:derivative-harmonic-polynomial}
Let $d \geq 2$, $m \geq 0$ and $\gamma \in \N_0^d$ and assume $(m,\gamma) \neq (0,0)$. Set $c=|\gamma|$. Then, for all $u \in \mathcal{H}_m(\R^d)$, we have
 \[
\|\partial^{\gamma} u\|_{L^2(S)} \leq \sqrt{d^c}\, m^{c} \|u\|_{L^2(S)}.
\] 
\end{lemma}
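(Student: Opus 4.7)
My plan is to induct on $c = |\gamma|$, reducing everything to the single-derivative case $c = 1$. First I would dispose of the trivial cases: if $c > m$ then $\partial^{\gamma} u \equiv 0$ and there is nothing to show; if $c = 0$ then the hypothesis $(m, \gamma) \neq (0, 0)$ forces $m \geq 1$, and the inequality reduces to the tautology $\|u\|_{L^2(S)} \leq \|u\|_{L^2(S)}$. So I may assume $1 \leq c \leq m$. Partial derivatives of harmonic polynomials are again harmonic (since $\partial_j$ commutes with $\Delta$) and homogeneous of one lower degree, so once I have the base estimate $\|\partial_j u\|_{L^2(S)} \leq \sqrt{d}\, m\, \|u\|_{L^2(S)}$ for every $u \in \mathcal{H}_m(\R^d)$ with $m \geq 1$ and every $j \in \{1, \dots, d\}$, iterating it $c$ times will yield
\[
\|\partial^{\gamma} u\|_{L^2(S)} \leq (\sqrt{d})^{c}\, m(m-1)\cdots(m - c + 1)\, \|u\|_{L^2(S)} \leq \sqrt{d^c}\, m^c\, \|u\|_{L^2(S)}.
\]

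For the base case $c = 1$, I would actually prove the sharper summed identity
\[
\sum_{j=1}^d \|\partial_j u\|_{L^2(S)}^2 = (2m - 2 + d)\, m\, \|u\|_{L^2(S)}^2
\]
via a Green's identity argument on the closed unit ball $B$. Let $dS_0$ denote the unnormalized Lebesgue surface measure on $S$; since both sides of the asserted inequality scale identically under passage from probability measure to $dS_0$, the estimate is equivalent in either normalization. Integration by parts gives
\[
\sum_{j=1}^d \int_B |\partial_j u|^2 \, dx = \int_B |\nabla u|^2 \, dx = \int_S \overline{u}\, \partial_\nu u \, dS_0 - \int_B \overline{u}\, \Delta u \, dx,
\]
where the last integral vanishes by harmonicity, and Euler's identity $x \cdot \nabla u(x) = m\, u(x)$ for $x \in S$ identifies the boundary term with $m \int_S |u|^2 \, dS_0$. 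To convert the ball integrals of $|\partial_j u|^2$ back into sphere integrals, I would invoke the standard polar-coordinate identity $\int_B |p|^2 \, dx = (2n + d)^{-1} \int_S |p|^2 \, dS_0$ for any homogeneous polynomial $p$ of degree $n$, applied to each $\partial_j u$ (degree $m - 1$). Combining these steps produces the summed identity above. Since $d \geq 2$ implies $(2m - 2 + d) m \leq d m^2$, positivity of each summand yields $\|\partial_j u\|_{L^2(S)}^2 \leq d m^2 \|u\|_{L^2(S)}^2$, completing the base case.

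I do not anticipate any significant obstacle. The argument is driven entirely by Green's identity, Euler's homogeneity relation, and polar coordinates, all applied in their cleanest form. The only mild subtlety is that one should sum over $j$ before exploiting positivity of each term: this is what upgrades the integration by parts to an exact identity rather than a mere inequality, and the factor $d$ in the final bound arises precisely from that summation.
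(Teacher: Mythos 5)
Your argument is correct and takes a genuinely different route from the paper's. The paper invokes the explicit formula for the $L^2(S)$-inner product of two homogeneous harmonic polynomials in terms of their monomial coefficients (Theorem 5.14 in Axler--Bourdon--Ramey), then estimates $\|\partial^{\gamma} u\|_{L^2(S)}^2 / \|u\|_{L^2(S)}^2$ as a ratio of weighted coefficient sums, bounding the product $\prod_{i=m-c}^{m-1}(d+2i)$ by $(md)^c$ and the falling-factorial weight $\alpha!/(\alpha-\gamma)!$ by $m^c$. Your approach avoids that coefficient formula altogether: Green's identity together with Euler's homogeneity relation gives $\int_B |\nabla u|^2\,dx = m \int_S |u|^2\,dS_0$, the polar-coordinate identity converts each $\int_B |\partial_j u|^2\,dx$ into $(2m-2+d)^{-1}\int_S |\partial_j u|^2\,dS_0$, and summing over $j$ yields the exact identity $\sum_j \|\partial_j u\|_{L^2(S)}^2 = (2m-2+d)\,m\,\|u\|_{L^2(S)}^2$, from which the single-derivative bound $\sqrt{d}\,m$ follows since $d\geq 2$ forces $2m-2+d\leq dm$. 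Iterating gives $\sqrt{d^c}\,m(m-1)\cdots(m-c+1) \leq \sqrt{d^c}\,m^c$, in fact a slightly sharper constant than the paper's. Both proofs ultimately rest on the same arithmetic inequality $d+2(m-1)\leq dm$ for $d\geq 2$, $m\geq 1$, but yours is self-contained and more elementary, at the cost of proceeding one derivative at a time rather than handling $\partial^{\gamma}$ in a single step.
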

\begin{proof}
We may assume that $m \geq 1$ and that $c \leq m$, as otherwise $\partial^{\gamma}u = 0$. By \cite[Thm 5.14]{AxlerBourdonRamey} there exists a constant $\nu_d  > 0$ so that for all $u, v \in \mathcal{H}_m(\R^d)$ of the form $u(x) = \sum_{|\alpha| = m}{b_{\alpha} x^{\alpha}}$, $v(x) = \sum_{|\alpha| = m}{ c_{\alpha} x^{\alpha}}$, we have
\[
\langle u, v \rangle_{L^2(S)} = \int_{S}{u(\zeta) \overline{v(\zeta)}d \zeta} = \nu_d \prod_{i=0}^{m-1}{(d+2i)^{-1}} \sum_{|\alpha|=m}{\alpha! b_{\alpha} \overline{c_{\alpha} } }.    
\]
Applying this with $u = v$ and computing $\partial^{\gamma} u(x) = \sum_{|\alpha|=m, \alpha \geq \gamma}{c_{\alpha} \frac{\alpha!}{(\alpha-\gamma)!} x^{\alpha- \gamma}}$, we obtain
	\[
\|  \partial^{\gamma} u\|_{L^2(S)}^2 \leq \left( \prod_{i=m-c}^{m-1}{(d+2i)} \right) \left(\max_{ \substack{|\alpha| =m\\ \gamma \leq \alpha}}{ \frac{\alpha!}{(\alpha-\gamma)!}} \right) \| u\|_{L^2(S)}^2 \leq (md)^c m^c \| u\|_{L^2(S)}^2 .\qedhere
\]
\end{proof}

\section{Proof of the main theorem}\label{sec:proof-main-theorem}
The aim of this section is to give the proof of Theorem \ref{thm:main-thm-non-radial} assuming the conclusion of Theorem \ref{thm:main-thm-radial-poincare}. Throughout \S \ref{sec:proof-main-theorem}, we assume that $d \geq 5$; the generalization to dimensions $d=2,3,4$ will be given in \S \ref{sec:small-dimensions} and requires an additional input.

At some points of the proof, it will be convenient to work with  an orthonormal basis $\mathcal{B}_m \subset \mathcal{H}_m(\R^d)$, so let us choose one such basis for each $m \geq 0$. Recall that $Z_m^d(x,y) = \sum_{u \in \mathcal{B}_m}{u(x) \overline{u(y)}}$ for all $(x,y) \in  \R^d \times \R^d$ and all $m \in \N_0$ and note that $Z_0^d(x,y) =1$.

Let us start by applying Corollary \ref{cor:cheap-non-radial} with $r_n = \rho_n = \sqrt{n}$ and $c_{p,n}(r) =b_{p,n}(r)$ and $\tilde{c}_{p,n}(r) = {\tilde{b}}_{p,n}(r)$, the numbers provided by Theorem \ref{thm:main-thm-radial-poincare}. In formula \eqref{eq:pre-interpolation-formula} we formally interchange the $n$-sum with the $m$-sum and then the $m$-sum with the integral and are thus motivated to define, for each $(x, \zeta) \in \R^d \times S$ and every $n \geq 1$, the (formal) series
\begin{align}
A_n(x, \zeta) &= \sum_{m=0}^{\infty}{b_{d+2m,n}(|x|) Z_m^d(x, \zeta/ \sqrt{n})}, \label{eq:def-An}\\
\tilde{A}_n(x, \zeta) &= \sum_{m=0}^{\infty}{i^m \tilde{b}_{d+2m,n}(|x|) Z_m^d(x, \zeta/ \sqrt{n})}. \label{eq:def-tilde-An}  
\end{align}
We will address convergence of these series in a moment, but let us observe right away that they trivially converge when $x =0$, with values $A_n(0, \zeta) = b_{d,n}(0)$ and $\tilde{A}_n(0, \zeta) = {\tilde{b}}_{d,n}(0)$. It follows from Corollary \ref{cor:cheap-non-radial} that the formula \eqref{eq:ultimate-interpolation-formula-in-thm} in Theorem \ref{thm:main-thm-non-radial} holds at $x = 0$, because in \eqref{eq:pre-interpolation-formula}, the outer $m$-sum then reduces to the term with $m =0$. The convergence is also absolute in this case, by Theorem \ref{thm:main-thm-radial-poincare}. 

To quantify convergence more generally and more precisely we introduce the following notations.  For each tuple of parameters
 \begin{equation}\label{eq:tuple}
T = (n,\alpha, \beta, \delta, R, s)  \in \N \times \N_0^d \times \N_0^d \times [0, \infty) \times [0, \infty) \times (0,1],
\end{equation}
satisfying $\delta \leq R$ and  for each $m \in \N_0$, we define
\[
S_m(T) = \sup_{\substack{\delta \leq |x| \leq R\\ s\leq |y| \leq s^{-1}}}{\left|\partial_x^{\alpha} \partial_{y}^{\beta} b_{d+2m,n}(|x|) Z_m^d(x,y)n^{-m/2}\right|}  
\]
and $\tilde{S}_m(T)$ analogously by replacing $b_{d+2m,n}$ by $\tilde{b}_{d+2m,n}$. We moreover define
\[
\mathcal{A}(T) = \sum_{m=0}^{\infty}{S_m(T)}, \qquad \tilde{\mathcal{A}}(T) = \sum_{m=0}^{\infty}{\tilde{S}_m(T)}.
\]
The main technical estimates we require are contained in the following lemma.
\begin{lemma}\label{lem:A-inf-summands-bounds}
Fix multi-indices $\alpha, \beta \in \N_0^d$.
\begin{enumerate}[(i)]
\item  For every $s \in (0,1]$, $R>0$ and $n \in \N$, the tuple $T=(n, \alpha,\beta, 0, R,s)$  satisfies $\mathcal{A}(T) < \infty$ and $\mathcal{\tilde{A}}(T) < \infty$. Note here that $\delta = 0$.
\item For all $0 < \delta < R < \infty$, there exists a constant $C > 0$, depending on $\delta, \alpha, R$ and $d$, such that for every $n \in \N$, the tuple $T = (n, \alpha, 0,\delta, R,1)$ satisfies
\[
\max{((\mathcal{A}(T), \tilde{\mathcal{A}}(T))} \leq Cn^{ \frac{5d}{4} + \frac{1}{8}+|\alpha|}.
\]
\end{enumerate}
\end{lemma}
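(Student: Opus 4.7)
The plan is to bound each summand of $\mathcal{A}(T)$ (and $\tilde{\mathcal{A}}(T)$, which is treated identically since $b_{p,n}$ and $\tilde b_{p,n}$ obey the same bounds in Theorem \ref{thm:main-thm-radial-poincare}) by Leibniz:
\[
\partial_x^\alpha \partial_y^\beta \bigl(b_{d+2m,n}(|x|)\, Z_m^d(x,y)\bigr) = \sum_{\alpha_1+\alpha_2=\alpha} \binom{\alpha}{\alpha_1}\, \partial^{\alpha_1} b_{d+2m,n}(|x|)\, \partial_x^{\alpha_2}\partial_y^\beta Z_m^d(x,y).
\]
The first factor is controlled by Theorem \ref{thm:main-thm-radial-poincare}. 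For the second, the key observation is that $\partial_x^{\alpha_2} Z_m^d(\cdot,y) = \sum_{u\in\mathcal{B}_m}(\partial^{\alpha_2}u)(\cdot)\,\overline{u(y)}$ is harmonic of degree $m-|\alpha_2|$ in $x$; combining \eqref{eq:sup-norm-dimension-bound} with Lemma \ref{lem:derivative-harmonic-polynomial} and the reproducing identity $\|Z_m^d(\cdot,y)\|_{L^2(S)}^2 = |y|^{2m}\dim\mathcal{H}_m$ yields
\[
|\partial_x^{\alpha_2} Z_m^d(x,y)| \leq C(d,\alpha)\, m^{|\alpha_2|}\,|x|^{m-|\alpha_2|}\,|y|^{m}\,\bigl(\dim\mathcal{H}_{m-|\alpha_2|}\,\dim\mathcal{H}_m\bigr)^{1/2},
\]
with an analogous bound (involving an extra $m^{|\beta|}(\dim\mathcal{H}_{m-|\beta|})^{1/2}$ factor) obtained by Cauchy--Schwarz in $u$ and the same sup-norm estimate applied to $\partial^\beta u$ when $\beta\neq 0$. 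By \eqref{eq:dimension-formula-harmonic-poly} both bounds are polynomial in $m$. The crucial numerical point is that with $p = d+2m$, the weight $n^{-m/2}$ in $S_m(T)$ exactly cancels the $m$-dependence of the $n$-exponent in \eqref{eq:bound-partial-b_p-away-from-zero-in-thm} (one gets $n^{d/4+1+\varepsilon+|\alpha_1|}$, independent of $m$), while $|x|^{-p/2+2(1+\varepsilon)}$ combines with $|x|^{m-|\alpha_2|}$ from the zonal estimate to yield $|x|^{-d/2+2(1+\varepsilon)-|\alpha_2|}$, bounded uniformly on $[\delta,R]$ provided $\delta>0$.

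For part (i), $\delta=0$ is allowed, so I would instead use \eqref{eq:bound-partial-b_p-with-zero-in-thm}. The same computation with $|x|\leq R$ and $|y|\leq s^{-1}$ shows that, up to a polynomial factor in $m$ and constants depending on $d,\alpha,\beta,R,s$, each summand is bounded by
\[
(47\, n\, R^2 s^{-2}/p)^{p/4}.
\]
Since $(C/p)^{p/4}\to 0$ super-exponentially in $p$ for any fixed $C$, this defeats the polynomial factor, giving absolute convergence.

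For part (ii), I would split $\sum_{m\geq 0} = \sum_{m\leq M} + \sum_{m>M}$ at $M = \lceil A n\rceil$, choosing $A = A(R,d)$ so that $47\,n R^2/(d+2m)\leq e^{-1}$ whenever $m>M$. On the head, \eqref{eq:bound-partial-b_p-away-from-zero-in-thm} with $\varepsilon=1/8$, $\beta=0$ and $|y|=1$, combined with the cancellations above, bounds each summand in the split $\alpha_1+\alpha_2=\alpha$ by $C(\delta,R,d,\alpha)\,n^{d/4+9/8+|\alpha_1|}\,m^{d-2+|\alpha_2|}$, so
\[
\sum_{m\leq M} S_m(T) \;\leq\; C\, n^{d/4+9/8+|\alpha_1|}\, M^{d-1+|\alpha_2|} \;\leq\; C_A\, n^{5d/4+1/8+|\alpha_1|+|\alpha_2|} \;=\; C_A\, n^{5d/4+1/8+|\alpha|},
\]
where the last equality uses $|\alpha_1|+|\alpha_2|=|\alpha|$ together with $M\sim n$. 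On the tail, \eqref{eq:bound-partial-b_p-with-zero-in-thm} and the choice of $A$ produce geometric decay $e^{-(d+2m)/4}$ per summand, so $\sum_{m>M}S_m(T)\leq C\,n^{d/4+|\alpha|}$, which for $d\geq 5$ is negligible compared to $n^{5d/4+1/8+|\alpha|}$. The bound for $\tilde{\mathcal{A}}(T)$ follows by the same argument.

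The main technical obstacle is making the exponent in (ii) come out to $5d/4+1/8+|\alpha|$ exactly. This hinges on two ingredients: the harmonicity-based (rather than crude termwise) bound on $\partial_x^{\alpha_2}Z_m^d$, which keeps the polynomial-in-$m$ growth at the sharp degree $d-2+|\alpha_2|$; and the split at $M\sim n$, which is essentially forced because \eqref{eq:bound-partial-b_p-with-zero-in-thm} only provides useful decay once $p$ exceeds a constant multiple of $n R^2$. The arithmetic identity $|\alpha_1|+|\alpha_2|=|\alpha|$ is what allows the head estimate to hit the target exponent exactly, with no extra $n^{|\alpha|}$ factor.
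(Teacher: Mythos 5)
Your proposal is correct and follows essentially the same route as the paper: Leibniz rule, Lemma \ref{lem:derivative-harmonic-polynomial} together with \eqref{eq:sup-norm-dimension-bound} to bound derivatives of $Z_m^d$, the $(47/p)^{p/4}$ factor for part (i), and a split at $M \asymp nR^2$ with the $n^{m/2}$/$|x|^{m}$ cancellations for part (ii). The only slip is cosmetic: your tail bound should read $\ll n^{d/2+|\alpha|}$ rather than $n^{d/4+|\alpha|}$ (the normalized summand carries $n^{d/2+m/2}$, not $n^{d/4+m/2}$, before the geometric factor), but since $d/2 < 5d/4 + 1/8$ in all cases the conclusion is unaffected.
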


In the arguments below, we will only use Lemma \ref{lem:A-inf-summands-bounds} in the case $\alpha = \beta = 0$. It may be helpful to focus on this special case in a first reading, to avoid distracting details that come from partial derivatives. The statements for general $\alpha, \beta$ imply that the partial sums on the right hand side of formula \eqref{eq:ultimate-interpolation-formula-in-thm} converge uniformly, together with all partial derivatives, on compact subsets of $\R^{d} \setminus \{0 \}$.

\begin{proof}[Proof of Lemma \ref{lem:A-inf-summands-bounds}]
To be able to refer to them later, let us first record the following computations, which follow directly from the generalized Leibniz rule and the formula \eqref{eq:zonal-spherical-in-terms-of-basis}:
\begin{align}
\partial_x^{\alpha} \partial_{y}^{\beta} b_{d+2m,n}(|x|) Z_m^d(x,y) &= \sum_{\gamma_1  + \gamma_2 = \alpha}{ \frac{\alpha!}{\gamma_1! \gamma_2!} \partial_x^{\gamma_1}b_{d+2m,n}(|x|) \partial_x^{\gamma_2} \partial_{y}^{\beta} Z_m^d(x,y)} \label{eq:partial-derivatives-for-the-sup-ST}\\
&= \sum_{u \in \mathcal{B}_m}{   \partial_y^{\beta} \overline{u(y)} \sum_{\gamma_1 + \gamma_2 = \alpha}{  \frac{\alpha!}{\gamma_1! \gamma_2!} \partial^{\gamma_1}_xb_{d+2m,n}(|x|) \partial_x^{\gamma_2}{u(x)}    } }.\label{eq:partial-derivatives-for-the-sup-ST-with-basis}
\end{align}
Whenever an estimate below involves the $\gamma_2$th or $\beta$th derivative of a harmonic polynomial of degree $m$, we may assume that  $|\gamma_2| \leq m$ or $|\beta| \leq m$, as otherwise the derivative vanishes. Moreover, we focus on the estimates for $\mathcal{A}(T)$, which will equally hold for $\tilde{\mathcal{A}}(T)$, because Theorem \ref{thm:main-thm-radial-poincare} gives the same upper bounds for $b_{p,n}$ and $\tilde{b}_{p,n}$.

Part (i) follows basically from the presence of the term $(47/p)^{p/4}$ in the bounds of Theorem \ref{thm:main-thm-radial-poincare} and from Lemma \ref{lem:derivative-harmonic-polynomial}. Turning to details, let $s \in (0, 1]$, $R > 0$ and $n \in \N$  be given. We bound the absolute value of the sum \eqref{eq:partial-derivatives-for-the-sup-ST-with-basis}, for $|x| \leq R$ and $s \leq |y| \leq s^{-1}$, by combining the following estimates:
\begin{itemize}
\item From \eqref{eq:sup-norm-dimension-bound}, $\|u\|_{L^2(S)} = 1$ and Lemma  \ref{lem:derivative-harmonic-polynomial}  we obtain
\begin{align*}
|\partial^{\beta} \overline{u(y)} | &\leq  |y|^{m-|\beta|} \sup_{S^{d-1}}{ |\partial^{\beta} u| } 
\leq  |y|^{m-|\beta|} (\dim{\mathcal{H}_{m-|\beta|}(\R^d)})^{1/2}\|\partial^{\beta}u\|_{L^2(S)} \\
&\ll_{d,|\beta|} s^{|\beta|- m} (m-|\beta|)^{\frac{d-2}{2}} m^{|\beta|}.
\end{align*}
\item Similarly, we find
$
\sup_{|x| \leq R}{|\partial_x^{\gamma_2}{u(x)} |} \ll_{d, \gamma_2} R^m (m-|\gamma_2|)^{\frac{d-2}{2}} m^{|\gamma_2|}
$, for each $\gamma_2 \leq \alpha$.
\item The bound \eqref{eq:bound-partial-b_p-with-zero-in-thm} in Theorem \ref{thm:main-thm-radial-poincare} implies 
\begin{equation}\label{eq:upper-bound-partial-bpn-gamma1-derivative}
\sup_{|x| \leq R}{|\partial_x^{\gamma_1}b_{d+2m,n}(|x|)|} \ll_{d,\gamma_1, R} n^{\frac{d+2m}{2}+|\gamma_1|} \left( \frac{47}{d+2m} \right)^{d/4+m/2}.
\end{equation}
\item The number of terms is $|\mathcal{B}_m| = \dim{\mathcal{H}_m(\R^d)} \ll_d m^{d-2}$, which follows from \eqref{eq:dimension-formula-harmonic-poly}.
\end{itemize}
We deduce that there are $U,X,Y >0$, all depending at most on $d,\alpha, \beta, R,s$ and $n$, so that $
S_m(T) \leq U m^X Y^m (2m+d)^{-m/2}$ for all $m \in \N_0$.  By the root-test or the ratio-test, the series  in part (i) therefore converge, as claimed.

In the remaining part (ii), we will track the dependence on $n$ more precisely. Let $0 < \delta <  R < \infty$ and set $T = (n,\alpha, 0, \delta, R,1)$. We may and will assume that $ \delta < 1 \leq R$. Let $M \geq 1$ be an integral parameter, to be chosen later. We define start and tail sums 
\[
\mathcal{A}_{\text{start}}(T) = \sum_{m=0}^{M}{S_m(T)}, \qquad \mathcal{A}_{\text{tail}}(T) = \sum_{m=M+1}^{\infty}{S_m(T)}.
\]
We start with the analysis of the tail, which is similar to part (i) and we will not yet use that $|x| \geq \delta$. As in the proof of part (i), we use Lemma \ref{lem:derivative-harmonic-polynomial} to bound the derivatives with respect to $x$ of $Z_m^d(x,y)$ appearing in \eqref{eq:partial-derivatives-for-the-sup-ST} by 
\begin{align}
|\partial_x^{\gamma_2} Z_m^d(x, \zeta)| &\ll_{d,|\gamma_2|} |x|^{m-|\gamma_2|} (m-|\gamma_2|)^{\frac{d-2}{2}} m^{|\gamma_2|} \| Z_m^d( \cdot, \zeta)\|_{L^2(S)} \notag \\
&\ll_{d,|\gamma_2|} |x|^{m-|\gamma_2|} m^{d-2+|\gamma_2|}, \label{eq:partial-zonal-sup-bound}
\end{align}
where we used that $\| Z_m^d( \cdot, \zeta)\|_{L^2(S)}^2 = \dim{\mathcal{H}_m(\R^d)}$ and where the implied constants depend neither on $x$, nor on $\zeta$. We have $|x|^{m-|\gamma_2|} \leq R^m$ in \eqref{eq:partial-zonal-sup-bound} and combined with \eqref{eq:upper-bound-partial-bpn-gamma1-derivative}  we see that 
\begin{align*}
 \mathcal{A}_{\text{tail}}(T)&\ll_{d,R, \alpha}  \sum_{m =M+1}^{\infty}{ n^{-m/2}  \left(\frac{47 }{d+2m} \right)^{d/4+m/2} n^{\frac{d+2m}{2}} R^{m} m^{d-2}  \sum_{\gamma_1 + \gamma_2 =\alpha}{  \frac{\alpha!}{ \gamma_1! \gamma_2!} n^{|\gamma_1|} m^{|\gamma_2|}}  }\\
&\ll_{d,R, \alpha}{ n^{d/2+|\alpha|} } \sum_{m =M+1}^{\infty}{ \left( \frac{47 R^2 n}{d+2m} \right)^{m/2}  m^{d-2}(1+m)^{|\alpha|} },
\end{align*}
where we absorbed the term $(47/(d+2m))^{d/4} \ll 1$  into the implied constant and used that the inner sum over $\gamma_1, \gamma_2$  is equal to
\[
(n+m)^{|\alpha|} = (n(1+m/n))^{|\alpha|} \leq n^{|\alpha|}(1+m)^{|\alpha|}.
\]
We now take $M=\lfloor 47 R^2 n \rfloor+2$. Then $\tfrac{ 47 R^2 n}{d+2m} \leq \tfrac{1}{2}$ for all $m \geq M+1$ and hence 
\[
\mathcal{A}_{\text{tail}}(T) \ll_{d, R, \alpha} n^{d/2+|\alpha|} \sum_{m=1}^{\infty}{2^{-m/2} m^{d-2} (1+m)^{|\alpha|} } \ll_{d, \alpha, R} n^{d/2+|\alpha|}.
\]
It remains to bound the finite sum $\mathcal{A}_{\text{start}}(T)$. At this point, the restriction $|x| \geq \delta >0$ becomes important.  By \eqref{eq:bound-partial-b_p-away-from-zero-in-thm} in Theorem \ref{thm:main-thm-radial-poincare} (applied by setting $\varepsilon =1/8$ in its statement) we have, for $\delta \leq |x| \leq R$,
\begin{equation}\label{eq:right-before-amazing-cancellation-happens}
|\partial^{\gamma_1}{b_{d+2m,n}(|x|)}| \ll_{\gamma_1,  R} n^{9/8 + d/4 + m/2+|\gamma_1|}|x|^{-d/2-m+9/4}.
\end{equation}
Crucially, the term $n^{m/2}$ in  \eqref{eq:right-before-amazing-cancellation-happens} cancels with the term $n^{-m/2}$ in the definition of $S_m(T)$ and the term $|x|^{-m}$ in \eqref{eq:right-before-amazing-cancellation-happens} cancels with  $|x|^m$ in \eqref{eq:partial-zonal-sup-bound}. This implies
\begin{align}
\mathcal{A}_{\text{start}}(T) &\ll_{d,R, \alpha} \sum_{m=0}^{M}{ \sup_{ \delta \leq |x| \leq R}{  \sum_{\gamma_1 + \gamma_2 = \alpha}{ \frac{\alpha!}{\gamma_1 ! \gamma_2 !} n^{9/8+d/4+ |\gamma_1|} |x|^{-d/2+9/4} |x|^{-|\gamma_2|} m^{d-2+|\gamma_2| }  } } } \notag\\
& \leq \left(\sup_{\delta \leq |x| \leq R}{ |x|^{-d/2+9/4}  } \right) n^{d/4+9/8} \sum_{m=0}^{M}{   (n+m/\delta)^{|\alpha|} m^{d-2}}. \label{eq:intermediate-bound-for-rmk-convergence-dgeq5}
\end{align}
For $m \leq M$ we can bound
\[
(n+m/\delta)^{|\alpha|} = n^{|\alpha|} \delta^{-|\alpha|}( \delta + \tfrac{m}{n})^{|\alpha|} \leq n^{|\alpha|} \delta^{-|\alpha|}(1 + \tfrac{ 47 R^2 n + 2}{n})^{|\alpha|}  \ll_{R, \alpha} n^{|\alpha|}.
\]
Inserting this into \eqref{eq:intermediate-bound-for-rmk-convergence-dgeq5}, we get
\[
\mathcal{A}_{\text{start}}(T) \ll_{d,R, \delta, \alpha}  n^{d/4+9/8+|\alpha|} (M+1)M^{d-2} \ll_{R,d}  n^{d/4+9/8+|\alpha|+  (d-1)  }= n^{5d/4+1/8 + |\alpha|}.
\]
Thus $\mathcal{A}_{\text{start}}(T)$ dominates $\mathcal{A}_{\text{tail}}(T)$ and this proves part (ii).
\end{proof}
As already mentioned, part (i) of Lemma \ref{lem:A-inf-summands-bounds} implies that for every  $n \in \N$, the series $A_{n}(x, \zeta)$ and $\tilde{A}_{n}(x, \zeta)$ define smooth functions of $(x, \zeta) \in \R^d \times (\R^{d} \setminus \{0 \})$, so they are smooth on $\R^d \times S^{d-1}$. Moreover, it shows that for every continuous function $g : \R^d \rightarrow \C$, the integral $\int_{S}{A_n(x, \zeta) g( \sqrt{n}\zeta) d\zeta}$ defines a smooth function of $x \in \R^d$ such that, for all $\alpha \in \N_0^d$ and $0 < \delta  \leq 1 \leq  R$,
\begin{equation}\label{eq:sup-norm-on-annulus-of-A_n}
\sup_{\delta \leq |x| \leq R}{ \left| \partial_x^{\alpha}\int_{S}{A_n(x, \zeta)}g(\sqrt{n}\zeta) d\zeta \right|} \ll_{d, \delta,R, \alpha} n^{\frac{5d}{4} + \frac{1}{8}+ |\alpha|} \sup_{\zeta \in S}{|g(\sqrt{n}\zeta)|}
\end{equation}
and such that
\begin{equation}\label{eq:interchange-of-m-sum-with-integral}
\int_{S}{A_n(x, \zeta) g( \sqrt{n}\zeta) d\zeta} = \sum_{m=0}^{\infty}{ \int_{S}{b_{d+2m,n}(|x|) Z_m^d(x, \zeta/ \sqrt{n}) g( \sqrt{n}\zeta) d \zeta}}.
\end{equation}
The upper bound \eqref{eq:sup-norm-on-annulus-of-A_n} and the identity \eqref{eq:interchange-of-m-sum-with-integral} also hold for $A_n$ replaced by $\tilde{A}_n$ and $ b_{d+2m,n}$ replaced by $i^m \tilde{b}_{d+2m,n}$.

With these preliminaries in place, we are now ready to prove Theorem \ref{thm:main-thm-non-radial}. Consider any Schwartz function $f: \R^d \rightarrow \C$ and fix a point $x \in \R^d \setminus \{0 \}$. The sequences of the suprema of $f$ and $\hat{f}$ over the spheres of radius $\sqrt{n}$ then decay rapidly.  Together with part (ii) of Lemma \ref{lem:A-inf-summands-bounds}, applied with $T =(n,0,0,|x|,|x|,1)$, it follows that the double series
\begin{equation}\label{eq:interchange-of-n-sum-with-m-sum}
\sum_{n=1}^{\infty}\int_{S}{A_{n}(x, \zeta) f(\sqrt{n}\zeta) d\zeta} = \sum_{n=1}^{\infty}{\sum_{m=0}^{\infty}{b_{d+2m,n}(|x|) \int_{S}{ Z_m^d(x, \zeta/ \sqrt{n}) f(\sqrt{n}\zeta) d\zeta}}},
\end{equation}
converges absolutely, as does the one involving $\hat{f}$, $\tilde{A}_{n}$ and $\tilde{b}_{d+2m,n}$. By Fubini--Tonelli on $\N \times \N_0$, we can therefore interchange the sum over $n$ with that over $m$. Then, combining \eqref{eq:interchange-of-n-sum-with-m-sum} with \eqref{eq:interchange-of-m-sum-with-integral} and Corollary \ref{cor:cheap-non-radial}, we deduce that the left hand side of \eqref{eq:interchange-of-n-sum-with-m-sum}, plus the corresponding series involving $\tilde{A}_n$ and $\hat{f}$, equals $f(x)$. This proves our interpolation formula \eqref{eq:ultimate-interpolation-formula-in-thm} in Theorem \ref{thm:main-thm-non-radial} for the point $x \neq 0$. Finally, recall that we already proved it for $x = 0$, right after the definition of $A_n(x,\zeta)$, $\tilde{A}_n(x,\zeta)$. This completes the proof of Theorem \ref{thm:main-thm-non-radial}, up to the proof of Theorem \ref{thm:main-thm-radial-poincare}, which will be given in \S \ref{sec:poincare-type-series}.

\subsection{Remarks on (uniform) convergence}\label{sec:rmks-convergence}
If we keep track of the implied constants in the proof of part (ii) in Lemma \ref{lem:A-inf-summands-bounds} in the case $|\alpha| = |\beta|=0$, we obtain the following explicit bound. For any $0 < \delta \leq 1 \leq R$ and every $n \in \N$, the supremum $\sup_{\delta \leq |x| \leq R, |\zeta|=1}{|A_n(x, \zeta)|}$ is less than or equal to
\begin{equation}
 C_2 H_d (1/ \delta)^{d/2-9/4}n^{d/4+9/8} (47 n R^2 +3)^{d-1}  + C_1 H_d(47/d)^{d/4} \sum_{m=1}^{\infty}{2^{-m/2}m^{d-2}}, \label{eq:explicit-constant} 
\end{equation} 
where $H_d =  \tfrac{2}{(d-2)!} \sup_{m \in \N_0}{ \frac{\dim{\mathcal{H}_m(\R^d)}}{m^{d-2}}   }$, compare with \eqref{eq:dimension-formula-harmonic-poly} and where $C_1, C_2 > 0$ are constants as in part (i) of Theorem \ref{thm:main-thm-radial-poincare}. We deduce that the interpolation formula \eqref{eq:ultimate-interpolation-formula-in-thm} converges uniformly and rapidly on every $d$-dimensional annulus, equivalently on any compact subset avoiding the origin. Note moreover that, if $2 \leq d \leq 4$,  then $(1/ \delta)^{d/2-9/4} \leq 1$ and the proof shows that we have uniform convergence on any compact subset of $\R^d$.
\subsection{Reformulation of the proof}\label{sec:alternative-proof}
We can formulate the above proof of Theorem \ref{thm:main-thm-non-radial}  in a way that is more reminiscent of \cite{R-V} (or \cite{CKMRV-E8-leech}). Namely, we can fix a vector $x \in \R^d$ and interpret the right hand side of the interpolation formula \eqref{eq:ultimate-interpolation-formula-in-thm} as a linear functional $\ell_{x} : \mathcal{S}(\R^d) \rightarrow \C$. Note that it is indeed \emph{defined} on all of $\mathcal{S}(\R^d)$ by Lemma \ref{lem:A-inf-summands-bounds} and moreover continuous. It therefore suffices to show that $\ell_x(f) = f(x)$, for $f$ in a generating set of a dense subspace of $\mathcal{S}(\R^d)$. Arguing as in the proof of Proposition \ref{prop:lifts}, we can therefore reduce to $f(x) = u_0(x) e^{\pi i z_0 |x|^2}$, where $u_0 \in \mathcal{B}_{m_0}$ and $z_0 \in \H$ are fixed. In this case, the desired identity $\ell(x) =f(x)$ reduces to the formula \eqref{eq:interpolation-formula-radial-in-main-thm-poincare} in Theorem \ref{thm:main-thm-radial-poincare}, in dimension $p = d+2m_0$, applied to the Gaussian.

\section{Dimensions 2, 3 and 4}\label{sec:small-dimensions}
To extend Theorem \ref{thm:main-thm-non-radial} to dimensions $2,3$ and $4$, we need the following input. 
\begin{proposition}\label{prop:radial-input-small-dimensions}
For every $p \in \{2,3,4\}$, there exist sequences $(a_{p,n})_{n \in \N_0}$, $(\tilde{a}_{p,n})_{n \in \N_0}$ of radial Schwartz functions  on $\R^p$ such that, for every $f \in \mathcal{S}(\R^p)$ and every $x \in \R^p$, 
\begin{equation}\label{eq:radial-input-small-dimensions}
f(x) = \sum_{n=0}^{\infty}{a_{p,n}(x) f(\sqrt{n})} +\sum_{n=0}^{\infty}{\tilde{a}_{p,n}(x) \hat{f}(\sqrt{n}) },
\end{equation}
where the series converge absolutely and such that, for every continuous semi-norm $\|\cdot\|$ on $\mathcal{S}(\R^p)$, the sequences $(\|a_{p,n}\|)_{n \in \N_0}$ , $(\|\tilde{a}_{p,n}\|)_{n \in \N_0}$ are of polynomial growth.
\end{proposition}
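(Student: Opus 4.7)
The plan is to deduce the existence of the sequences $a_{p,n}, \tilde{a}_{p,n}$ from the general framework of Bondarenko--Radchenko--Seip. Applied to the congruence subgroup $\Gamma(2)$ and the half-integer weights $p/2 \in \{1, 3/2, 2\}$ relevant to dimensions $p = 2,3,4$, their results should yield modular-like generating functions
$F_p(\tau, r) = \sum_{n \geq 0} a_{p,n}(r)\, e^{\pi i n \tau}$ and $\tilde{F}_p(\tau, r) = \sum_{n \geq 0} \tilde{a}_{p,n}(r)\, e^{\pi i n \tau}$, whose functional equation in $\tau$ is precisely the one (see \S\ref{sec:generating-series-and-functional-equations}) obtained by applying the desired formula to the Gaussians $e^{\pi i \tau r^2}$. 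Extracting Fourier coefficients then produces the candidate sequences. For $p=3$, there is an independent alternative: transport Radchenko--Viazovska's formula for odd Schwartz functions on $\R$ through the intertwining isomorphism $R : \mathcal{S}_{\text{rad}}(\R^3) \to \mathcal{S}_{\text{odd}}(\R)$ recalled in the introduction. This serves as a sanity check.

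With the candidates in hand, I would verify three points in sequence. First, each $a_{p,n}$ and $\tilde{a}_{p,n}$ is a smooth radial function on $\R^p$, using Lemma \ref{lem:radial-extension-of-even} to promote an even smooth function of the radius to a smooth radial function on $\R^p$. Second, each is of rapid decay, hence Schwartz; this follows from holomorphy and appropriate decay of $F_p, \tilde{F}_p$ at the cusps of $\Gamma(2)\backslash \H$, via the contour representation
\begin{equation*}
a_{p,n}(r) = \tfrac{1}{2}\int_{iy_0 + [-1,1]} F_p(\tau, r)\, e^{-\pi i n \tau}\, d\tau
\end{equation*}
for an arbitrary $y_0 > 0$. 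Third, the interpolation formula \eqref{eq:radial-input-small-dimensions} holds for every $f \in \mathcal{S}_{\text{rad}}(\R^p)$. By Lemma \ref{lem:density-gaussians-and-continuity} and continuity of both sides in $f$, this last point reduces to the case of Gaussians $f(x) = e^{\pi i \tau |x|^2}$, where it is \emph{exactly} the functional equation that characterizes $F_p, \tilde{F}_p$.

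The main obstacle is showing that each continuous semi-norm of $a_{p,n}$ and $\tilde{a}_{p,n}$ grows only \emph{polynomially} in $n$. Concretely, one needs bounds of the form $\sup_{r \in \R}(1+r^2)^N |\partial_r^k a_{p,n}(r)| \ll n^{C(N,k,p)}$. Choosing the height $y_0 = y_0(n)$ optimally in the above contour integral and combining it with uniform-in-$r$ polynomial bounds on $F_p(\tau, r)$ and on finitely many of its $r$-derivatives — bounds that should be read off from the Eichler-integral presentation provided by BRS and the weight-$p/2$ modular transformation law — should yield the required estimates. The task is notably less delicate than the $p$-explicit bounds in Theorem \ref{thm:main-thm-radial-poincare}, but care is needed to uniformly control all derivatives in $r$ and all polynomial weights at once, since this polynomial growth is exactly what permits the rearrangement argument of \S\ref{sec:proof-main-theorem} to go through and produce the non-radial analogue of Theorem \ref{thm:main-thm-non-radial} in dimensions $2, 3$ and $4$.
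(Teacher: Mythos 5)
Your proposal follows essentially the same route as the paper: the paper likewise obtains the generating series from the results of Bondarenko--Radchenko--Seip specialized to $\varphi(z)=e^{\pi i z r^2}$ and $k=p/2$, derives the interpolation formula \eqref{eq:radial-input-small-dimensions} from the functional equations together with the density of Gaussians (Lemma \ref{lem:density-gaussians-and-continuity}), and establishes the Schwartz property and polynomial growth of semi-norms by the contour-integral technique of Radchenko--Viazovska (cf.\ Proposition 6.1 in \cite{BRS}). Your side remark that the $p=3$ case can be cross-checked against the odd interpolation formula of \cite{R-V} via the intertwining isomorphism with $\mathcal{S}_{\text{odd}}(\R)$ is correct but not needed.
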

\begin{proof}
This follows from more general results by Bondarenko, Radchenko and Seip \cite{BRS}. In the notation of their paper, we specialize the discussions in section 3 of \cite{BRS} to the function $\varphi(z) = e^{\pi i z r^2}$, where $r =|x| \in \R_{\geq 0}$ and to the parameter $k = p/2$ (their results would in fact cover all real $k \geq 0$). The Fourier coefficients of the series denoted  $F_{k}^{\pm}(\tau, \varphi)$ give the Fourier- even and -odd parts of the radial functions $a_{p,n}$ and $\tilde{a}_{p,n}$ is the Fourier transform of $a_{p,n}$ on $\R^{p}$. The interpolation formula \eqref{eq:radial-input-small-dimensions} follows from the density of complex Gaussians (Lemma \ref{lem:density-gaussians-and-continuity}) together with the functional equations satisfied by the generating series $F_{k}^{\pm}(\tau, \varphi)$, as in \cite{R-V} (see also \S \ref{sec:generating-series-and-functional-equations} for a related discussion). The same technique as in \cite{R-V} can be used to prove that the functions $a_{p,n}, \tilde{a}_{p,n}$ belong to the Schwartz space and that all their Schwartz semi-norms grow polynomially with $n$ (see also Proposition 6.1 in \cite{BRS}).
\end{proof}

\begin{theorem}\label{thm:non-radial-small-dimensions}
Let $d  \in \{2, 3,4 \}$. For every $n \geq 1$, there are two smooth functions $A_n, \tilde{A}_n : \R^d \times S^{d-1} \rightarrow \C$ and for every multi-index $\alpha \in \N_0^d$ of size $|\alpha| \leq 1$, two Schwartz functions $h_{\alpha}, \tilde{h}_{\alpha} \in \mathcal{S}(\R^d)$ such that, defining
\[
T_x(f) = \sum_{|\alpha| \leq 1}{h_{\alpha}(x) (\partial^{\alpha}f)(0)}, \quad \tilde{T}_x(g) = \sum_{|\alpha| \leq 1}{{\tilde{h}}_{\alpha}(x) (\partial^{\alpha} g)(0)},
\]
for $f,g \in \mathcal{S}(\R^d)$ and $x \in \R^d$, the following holds. For all $f \in \mathcal{S}(\R^d)$ and all $x \in \R^d$,
\begin{equation}\label{eq:ultimate-interpolation-formula-in-last-section}
f(x) = T_x(f) +\sum_{n=1}^{\infty}{\int_{S}{A_n(x, \zeta) f(\sqrt{n}\zeta) d\zeta}} +  \tilde{T}_x(\hat{f}) + \sum_{n=1}^{\infty}{\int_{S}{{\tilde{A}}_n(x, \zeta) \hat{f}(\sqrt{n}\zeta) d\zeta}}
\end{equation}
 and both series converge absolutely.
\end{theorem}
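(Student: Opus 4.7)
The plan is to follow the argument of \S \ref{sec:proof-main-theorem} almost verbatim, but to substitute the radial interpolation formula from Proposition \ref{prop:radial-input-small-dimensions} for that of Theorem \ref{thm:main-thm-radial-poincare} whenever the auxiliary dimension $p = d + 2m$ is at most $4$. For $d \in \{3,4\}$ only the index $m = 0$ falls in this small range, while for $d = 2$ we have $m \in \{0, 1\}$, matching the range $|\alpha| \leq 1$ in the statement. The new feature, compared with the case $d \geq 5$, is that Proposition \ref{prop:radial-input-small-dimensions} includes an $n = 0$ term, and after the harmonic-analytic lifting procedure of Corollary \ref{cor:cheap-non-radial} this will produce precisely the finite correction sums $T_x(f)$ and $\tilde{T}_x(\hat f)$.

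Concretely, set $c_{p,n} = a_{p,n}$ for $p \in \{2,3,4\}$ and $c_{p,n} = b_{p,n}$ for $p \geq 5$, with $\tilde{c}_{p,n}$ defined analogously. For $n \geq 1$ define $A_n(x, \zeta)$ and $\tilde{A}_n(x,\zeta)$ by the formulas \eqref{eq:def-An}, \eqref{eq:def-tilde-An} with $b, \tilde{b}$ replaced by $c, \tilde{c}$. Smoothness on $\R^d \times S^{d-1}$ is inherited from Lemma \ref{lem:A-inf-summands-bounds}(i), since the finitely many summands indexed by $m$ with $p \leq 4$ are trivially smooth. For multi-indices $\alpha$ with $|\alpha| \leq 1$, set
\[
h_\alpha(x) = \begin{cases} \dfrac{1}{\alpha!}\, a_{d+2|\alpha|, 0}(|x|) \displaystyle\int_S \zeta^\alpha Z_{|\alpha|}^d(x, \zeta)\, d\zeta & \text{if } d + 2|\alpha| \leq 4,\\[0.3em] 0 & \text{otherwise,} \end{cases}
\]
and define $\tilde{h}_\alpha$ analogously with $\tilde{a}_{d+2|\alpha|, 0}$ in place of $a_{d+2|\alpha|, 0}$. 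Since $a_{p,0}, \tilde{a}_{p,0}$ are radial Schwartz functions on $\R^p$, their radial profiles yield Schwartz functions on $\R^d$ via Lemma \ref{lem:radial-extension-of-even}; multiplying by the polynomial $x \mapsto \int_S \zeta^\alpha Z_{|\alpha|}^d(x, \zeta)\, d\zeta$ (of degree $|\alpha|$ in $x$) keeps them in $\mathcal{S}(\R^d)$.

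The identity \eqref{eq:ultimate-interpolation-formula-in-last-section} now follows by applying Corollary \ref{cor:cheap-non-radial} with $r_n = \rho_n = \sqrt{n}$ and the hybrid coefficients $(c_{p,n}, \tilde{c}_{p,n})$. Its conclusion \eqref{eq:pre-interpolation-formula} splits into the contribution from $n \geq 1$, which reassembles (via the same Fubini--Tonelli rearrangement used in \S \ref{sec:proof-main-theorem}) into the two sphere-integral series of \eqref{eq:ultimate-interpolation-formula-in-last-section}, and the contribution from $n = 0$, which is supported on those indices $m$ for which $d+2m \leq 4$. For such an $m$, Definition \ref{def:lifts} together with \eqref{eq:zonal-spherical-in-terms-of-basis} gives
\[
\sum_{u \in \mathcal{B}_m} u(x)\, L_u^{d+2m}f(0) = \sum_{|\alpha|=m} \frac{(\partial^\alpha f)(0)}{\alpha!} \int_S \zeta^\alpha Z_m^d(x, \zeta)\, d\zeta,
\]
which, multiplied by $a_{d+2m,0}(|x|)$ and summed over the relevant $m$, assembles exactly into $T_x(f)$; the parallel computation for $\hat{f}$, using Proposition \ref{prop:lifts}(iii) to identify $\mathcal{F}(L_u^{d+2m}f) = i^m L_u^{d+2m}\hat{f}$, produces $\tilde{T}_x(\hat{f})$. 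Absolute convergence of the $n$-series is verified exactly as in \S \ref{sec:proof-main-theorem}: Lemma \ref{lem:A-inf-summands-bounds}(ii) handles the tail indices $m$ with $d + 2m \geq 5$, while for the finitely many small-$m$ summands one invokes the polynomial growth in $n$ of the continuous semi-norms of $a_{p,n}, \tilde{a}_{p,n}$ (Proposition \ref{prop:radial-input-small-dimensions}) together with the rapid decay of $f$ and $\hat{f}$ on $\sqrt{n}\, S^{d-1}$.

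The main bookkeeping burden is to check that the finitely many $n = 0$ contributions really do collect into a single expression of the claimed form $T_x(f) = \sum_{|\alpha| \leq 1} h_\alpha(x)(\partial^\alpha f)(0)$ with Schwartz functions $h_\alpha$ independent of $f$; this is automatic from the explicit formula for $L_u^p f(0)$ in Definition \ref{def:lifts}, which is already a fixed finite linear functional in the derivatives $(\partial^\alpha f)(0)$ with $|\alpha| = m$. Beyond this, no new quantitative input is required: the polynomial growth of continuous semi-norms asserted by Proposition \ref{prop:radial-input-small-dimensions} is precisely what the convergence scheme of \S \ref{sec:proof-main-theorem} demands of the finitely many small-$m$ contributions not covered by Theorem \ref{thm:main-thm-radial-poincare}.
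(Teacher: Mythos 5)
Your proposal is correct and follows essentially the same route as the paper's own proof: you use the identical hybrid choice of coefficients ($a_{p,n}$ for $p \le 4$, $b_{p,n}$ for $p \ge 5$), the same definition of $A_n$, $\tilde A_n$, and the same extraction of the $n=0$ terms (which collapse onto the finitely many indices $m$ with $d+2m \le 4$, since $b_{p,0}=0$ for $p\ge 5$ by Corollary~\ref{cor:b_p-zero-for-negative-n}) to produce $T_x$, $\tilde T_x$, with your formula for $h_\alpha$ matching the paper's exactly. The convergence and smoothness arguments (Lemma~\ref{lem:A-inf-summands-bounds} for the tail, polynomial growth of Schwartz semi-norms from Proposition~\ref{prop:radial-input-small-dimensions} for the finitely many exceptional terms) are also the same as in the paper.
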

\begin{proof}
We modify the arguments in \S \ref{sec:proof-main-theorem} as follows. First, we define the integers $M_2 = 2$, $M_3 =1$, $M_4 = 1$. We start with Corollary \ref{cor:cheap-non-radial} and apply it with inputs $r_n = \rho_n = \sqrt{n}$ and $c_{p,n}(r)$ and $\tilde{c}_{p,n}(r)$ taken as follows, depending on the dimension $d$ of interest:
\begin{align*}
&(c_{d+2m,n}(r), \tilde{c}_{d+2m,n}(r))  = (a_{d+2m,n}(r),\tilde{a}_{d+2m,n}(r)) &&\text{ if }\, m < M_d,\\
&(c_{d+2m,n}(r), \tilde{c}_{d+2m,n}(r))  = (b_{d+2m,n}(r),\tilde{b}_{d+2m,n}(r)) &&\text{ if }\, m \geq M_d,
\end{align*}
where $b_{d+2m,n}$ and $\tilde{b}_{d+2m,n}$ are as in Theorem \ref{thm:main-thm-radial-poincare} and $a_{d+2m,n}, \tilde{a}_{d+2m,n}$ are as in Proposition \ref{prop:radial-input-small-dimensions} (and we abuse notation as in \S \ref{sec:notation-radial-near-origin}). We then redefine the series $A_n$ in \eqref{eq:def-An}  to
\[
A_n(x, \zeta) = \sum_{m=0}^{\infty}{c_{d+2m,n}(|x|) Z_m^d(x, \zeta/ \sqrt{n})}
\]
and redefine $\tilde{A}_n$ in \eqref{eq:def-tilde-An} in the same way, replacing $\tilde{b}_{d+2m}$ by $\tilde{c}_{d+2m}$. Again, these series trivially converge at $x = 0$ and the formula \eqref{eq:ultimate-interpolation-formula-in-last-section} holds in this case  by Corollary \ref{cor:cheap-non-radial}. Notice that they differ by at most two terms from the ones that involved only $b_{d+2m,n}$, $\tilde{b}_{d+2m,n}$. By the assumption on the semi-norms of $a_{p,n}, \tilde{a}_{p,n}$, we can control the ``exceptional" terms by
\[
|a_{d+2m,n}(|x|)Z_m^d(x, \zeta)| \ll_d \left(\sup_{\xi \in \R^d}{a_{d+2m,n}(|\xi|) |\xi|^m} \right) m^{d-2} \ll n^{B}m^{d-2},
\]
where $B > 0$ depends only on $d$ (because at most two values of $m$ need to be considered here). It follows that the new functions $A_n$, $\tilde{A}_n$ obey bounds similar to those stated in Lemma \ref{lem:A-inf-summands-bounds}. The functions $h_{\alpha}$, $\tilde{h}_{\alpha}$ arise from Corollary \ref{cor:cheap-non-radial} as follows. In the double sum \eqref{eq:pre-interpolation-formula}, we split the inner $n$-sum into the sub-sums over $n \in \{0 \}$ and $n \in \N$ and then interchange (as we may) the outer sum with these inner sums individually. Doing so, we see that
\[
h_{\alpha}(x) = \frac{1}{\alpha!} \int_{S}{\zeta^{\alpha} Z_m^d(x, \zeta) d\zeta}\, a_{d+2m,0}(|x|)=  \frac{1}{\alpha!}\sum_{u \in \mathcal{B}_m}{ \left(\int_{S}{\zeta^{\alpha} \overline{u(\zeta)} d\zeta} \right)  a_{d+2m,0}(|x|) u(x)},
\]   
where $S = S^{d-1}$ and $\mathcal{B}_m \subset \mathcal{H}_m(\R^d)$ is an orthonormal basis. In this way we  can prove \eqref{eq:ultimate-interpolation-formula-in-last-section}, with point-wise absolute convergence (but recall also the remarks regarding uniform convergence made at the end of \S \ref{sec:rmks-convergence}).
\end{proof}

\section{Poincar{\'e} series-type construction}\label{sec:poincare-type-series}
The goal of \S \ref{sec:poincare-type-series} is to prove Theorem \ref{thm:main-thm-radial-poincare}. Basic preliminaries on modular forms follow in \S \ref{sec:modular-preliminaries} and the general proof strategy via generating series and functional equations, following \cite{R-V, CKMRV-E8-leech}, is explained in \S \ref{sec:generating-series-and-functional-equations}. After some group theoretic preliminaries in \S \ref{sec:a-particular-set-of-words}, the definition of the solutions to the above mentioned functional equations, as well as the definition of the functions $b_{p,n}, \tilde{b}_{p,n}$ in Theorem \ref{thm:main-thm-radial-poincare}, is given in  \S \ref{sec:definition-of-generating-series}. The required growth estimates are then proved in \S \ref{sec:growth-estimates-poincare}.
\subsection{Modular preliminaries}\label{sec:modular-preliminaries}
We assemble some basic facts related to modular forms that are relevant for our purposes. As general references, we mention \cite{Iwaniec, SerreCourse, MumfordTata, rankin-book}.
\subsubsection{Fractional linear transformations}\label{sec:fractional-linear-transformations}
We let $\SL_2(\R)$ and its subgroups act on the upper half plane $\H$ by fractional linear transformations. For $M = \begin{psmallmatrix} a & b \\ c &d \end{psmallmatrix} \in \SL_2(\R)$ and $\tau \in \H$ we define $j(M, \tau) = c \tau + d$ and we recall that $\imag(M \tau) = \imag(\tau) |j(M,\tau)|^{-2}$. For $M \in \SL_2(\R)$ we use $[M]$ to denote its image in $\PSL_2(\R)$ and similarly for elements of subgroups $\Gamma \leq \SL_2(\R)$ containing $-I$. We write $\overline{\Gamma}$ for the image of such a subgroup in $\PSL_2(\R)$. 
\subsubsection{Congruence subgroups of level 2}\label{sec:congruence-subgroups}
We use $S = \begin{psmallmatrix} 0 & -1\\ 1 & 0 \end{psmallmatrix}$ and $T = \begin{psmallmatrix} 1 & 1\\ 0 & 1 \end{psmallmatrix} \in \SL_2(\Z)$, which together generate the group $\SL_2(\Z)$. Let $\text{pr}_2: \SL_2(\Z) \rightarrow \SL_2(\Z/ 2 \Z)$ denote the natural morphism. The principal congruence subgroup of level $2$ is the normal subgroup $\Gamma(2) = \ker{(\text{pr}_2)} \triangleleft \SL_2(\Z)$. It is generated by $-I,T^2, ST^2 S$. The group $\overline{\Gamma(2)}$ is \emph{freely} generated by $[T^2]$ and $[S T^2 S]$. The theta subgroup is $\Gamma_{\theta}= \text{pr}_2^{-1}(\{1, \text{pr}_2(S)\})$ and equal to $ \Gamma(2) \sqcup S \Gamma(2)$ and moreover generated by $S$ and $T^2$.
\subsubsection{Jacobi's theta function}\label{sec:jacobi-theta}
For $(z, \tau) \in \C \times \H$, let $\vartheta(z, \tau) = \sum_{n \in \Z}{e^{\pi i n^2 \tau + 2 \pi i n  z}}$ denote Jacobi's theta function and let $\Theta_3(\tau)=\theta_{00}(\tau) =  \vartheta(0, \tau)$ denote one of its Nullwerte, following historical notations. This series converges normally on $\H$ and it is well-known that $\Theta_3$ never vanishes on $\H$, by Jacobi's celebrated triple product formula (for example). We may therefore define, for all $(M, \tau) \in \PSL_2(\R) \times \H$, the number $j_{\Theta}(M,\tau) = \Theta_3(M \tau)/ \Theta_3(\tau) \in \C^{\times}$. The Poisson summation formula for \emph{even} Schwartz functions on $\R$ is equivalent to $j_{\Theta}(S,\tau) = (-i\tau)^{1/2}$ (Lemma \ref{lem:density-gaussians-and-continuity} and \S \ref{sec:notation-square-roots}) and the identity $j_{\Theta}(T^2, \tau) = 1$ is trivial. Since $\Gamma_{\theta}$ is generated by $S$ and $T^2$, it follows that $\Theta_3^8$ transforms like a modular form of weight $4$ on $\Gamma_{\theta}$ and that
\begin{equation}\label{eq:automorphic-factors-the-same}
|j(M,\tau)| = |j_{\Theta}(M,\tau)|^{2} \quad \text{for all} \quad (M,\tau) \in \overline{\Gamma_{\theta}} \times \H.
\end{equation}  
We give more information on the transformation laws of $\Theta_3$ in \S \ref{sec:bessel-kloosterman} and introduce its accompanying theta constants $\Theta_2, \Theta_4$ in \S \ref{sec:space-of-relations-infinite-dimensional}, but these things will not be needed in the remainder of \S \ref{sec:poincare-type-series}.
\subsubsection{Slash action}\label{sec:slash-action}
For any half-integer $k \in \tfrac{1}{2}\Z$ and any complex vector space $\mathcal{S}$ (e.g. $\mathcal{S} = \mathcal{S}_{\text{rad}}(\R^p)$ or $\C$), we define the slash-action in weight $k$ on the space of all functions $f: \H \rightarrow \mathcal{S}$, by $(f|_{k}M)(z) = j_{\Theta}(M,z)^{-2k} f(Mz)$. We extend it linearly to the group ring $\C[\PSL_2(\R)]$.

\subsection{Generating series and functional equations}\label{sec:generating-series-and-functional-equations}
As part of the  proof Theorem \ref{thm:main-thm-radial-poincare}, we explain here the general strategy to prove an interpolation formula for radial Schwartz functions on $\R^d$, by rephrasing the problem in terms of certain holomorphic functions on the complex upper half plane. This strategy is very similar to the one used in \cite{R-V} and also similar to the more complicated one used in \cite{CKMRV-E8-leech}. We shall implement it in \S \ref{sec:definition-of-generating-series} and \S \ref{sec:growth-estimates-poincare}. 

Suppose we want to find radial functions $a_{n}, \tilde{a}_{n} $ on $\R^p$ such that for all $f \in \mathcal{S}_{\text{rad}}(\R^p)$ and all $x \in \R^p$,
\begin{equation}\label{eq:radial-fip-formula-sec-radial-fip}
f(x) = \sum_{n=0}^{\infty}{f(\sqrt{n}) a_{n}(x) } + \sum_{n=0}^{\infty}{\hat{f}(\sqrt{n}) \tilde{a}_{n}(x) }
\end{equation}
with absolute convergence. Fixing a point $x \in \R^p$ we may think of \eqref{eq:radial-fip-formula-sec-radial-fip} as an identity of linear functionals on $\mathcal{S}_{\text{rad}}(\R^p)$. From this point of view, it is reasonable to search among sequences $(a_n(x))_{n \in \N_0}$, $(\tilde{a}_n(x))_{n \in \N_0}$ that grow at most polynomially in $n$, because in this case, the right hand side of \eqref{eq:radial-fip-formula-sec-radial-fip} also defines a \emph{continuous} linear functional and the validity of \eqref{eq:radial-fip-formula-sec-radial-fip} becomes equivalent to the validity of the same equation for $f$ belonging to a (generating set of a) dense subspace of $\mathcal{S}_{\text{rad}}(\R^p)$. Such a set is given by $\{G_p(\tau) \,:\,\tau \in \H\}$, by Lemma \ref{lem:density-gaussians-and-continuity}.
Requiring polynomial growth on the coefficients also implies that the generating series 
\[
F(\tau,x) = \sum_{n=0}^{\infty}{a_n(x) e^{\pi i n \tau}}, \quad \tilde{F}(\tau,x) = \sum_{n=0}^{\infty}{ \tilde{a}_n(x) e^{\pi i n \tau }}
\]
converge absolutely for all $\tau \in \H$ and $x \in \R^p$. If \eqref{eq:radial-fip-formula-sec-radial-fip} holds for all $f$, then in particular for $f = G_p(\tau)$, and hence the following set of functional equations must be satisfied by $F$, $\tilde{F}$. We write these without the variables $x, \tau$ and we use the slash action of $\C[\PSL_2(\Z)]$ in weight $k=p/2$, as defined in \S \ref{sec:slash-action}.
\begin{enumerate}[(i)]
\item $F + \tilde{F}|_{k}S = G_p$.
\item $F|_{k}(T^2-1) = 0$.
\item $\tilde{F}|_{k}(T^2-1) = 0$.
\item $F|_{k}(ST^2 S - 1) = G_p|_{k}(ST^2 S -1)$.
\end{enumerate}
Here, equation (iv) is implied by all the others and equation (iii) is implied by all the others. The formal verification is left to the reader. Conversely, if we can find, in the first place, two functions $F, \tilde{F} : \H \times \R^p \rightarrow \C$  that are holomorphic and 2-periodic in the first variable, radial in the second and moreover related by (i), then we can define $a_n(x)$ as the $n$th Fourier coefficient of $\tau \mapsto F(\tau,x)$ and $\tilde{a}_n$ as the $n$th Fourier coefficient of $\tau \mapsto \tilde{F}(\tau,x)$. To prove \eqref{eq:radial-fip-formula-sec-radial-fip}, it then only remains to be shown that $a_{n} = 0= \tilde{a}_{n}$ for $n < 0$ and that the polynomial growth requirement holds. 
\subsection{A particular set of words}\label{sec:a-particular-set-of-words}
We continue with our preparations for the proof of Theorem \ref{thm:main-thm-radial-poincare}, outlined at the beginning of \S \ref{sec:poincare-type-series}, by introducing and studying a certain subset of $\overline{\Gamma(2)}$, that will enter the definition of the generating series in the next subsection.

As for notation, for an element $M \in \SL_2(\Z)$, we denote by $[M]$ its class modulo $\{\pm I\}$, but we also use $\bar{S}= [S]$ in  this section. Note that $\bar{S}^2 = 1 \in \PSL_2(\Z)$. If $M = \begin{psmallmatrix}
a & b\\ c & d
\end{psmallmatrix}$, then we will often write $a = a_M$, $b = b_M$, $c = c_M$ and $d = d_M$. When it is unambiguous, we use the same notation for $M \in \PSL_2(\Z)$, for example,  writing  $|c_M| \geq 1$ or a ratio of matrix entries. We recall that the group $\overline{\Gamma(2)}$ is freely generated by the elements $A = [T^2]$ and $B = [S T^2 S]$. We also use the representatives $A_{0} = T^2$, $B_0 = ST^2S^{-1}$ in this section. 

\begin{definition}\label{def:set-B}
The subset $\mathcal{B} \subset \overline{\Gamma(2)}$ is defined as the set of all nonempty finite reduced words in $A$ and $B$ that start with a nonzero power of $B$. More formally, an element $M \in \overline{\Gamma(2)}$ belongs to $\mathcal{B}$, if and only if there are integers $m \geq 1$ and $e_1,\dots, e_m, f_1, \dots, f_{m}$, all nonzero, except possibly $e_m$, such that $M = B^{f_1}A^{e_1} \cdots B^{f_m}A^{e_m}$. We define the set $\tilde{\mathcal{B}}= \mathcal{B}\overline{S} \sqcup \{ \overline{S} \}  = \{ M \overline{S} \,:\, M \in \mathcal{B} \} \sqcup \{ \overline{S} \} \subset \overline{\Gamma_{\theta}}$.
\end{definition} 
We shall prove that the elements $\mathcal{B}$ and those of $\tilde{\mathcal{B}}$ are uniquely determined by their bottom rows (up to sign). To formulate this precisely, we define 
\begin{align*}
\mathcal{P} &= \{ (c,d) \in \Z^2 \,:\, \gcd(c,d) = 1, \quad  c \equiv 0, d \equiv 1 \pmod{2},\, c \neq 0 \},\\
\tilde{\mathcal{P}} &= \{ (c,d) \in \Z^2 \,:\, \gcd(c,d) = 1, \quad  c \equiv 1, d \equiv 0 \pmod{2} \}.
\end{align*}
The unit group $\Z^{\times} = \{-1, 1 \}$ acts on these sets in the obvious way, via $\varepsilon \cdot (c, d) = (\varepsilon c, \varepsilon d)$, for $\varepsilon \in \Z^{\times}$. We further equip them with an action of $\Z$, defined as $(c,d)| \ell = (c, d+2\ell c)$. These actions commute, so that $\Z$ acts on the quotients $\mathcal{P} / \Z^{\times}$, $\tilde{\mathcal{P}} / \Z^{\times}$. We write the class of  $(c,d)$ in these quotients as $[(c,d)]= \{(c,d), (-c,-d)\}$.  
\begin{lemma}\label{lem:identifying-sets-B-tilde-B}
With notations as above, the following holds.
\begin{enumerate}[(i)]
\item For each $M \in \mathcal{B}$, $\tilde{M} \in \tilde{\mathcal{B}}$ and each $\ell \in \Z$ one has $M A^{\ell} \in \mathcal{B}$ and $\tilde{M} A^{\ell} \in \tilde{\mathcal{B}}$. In other words, the group $\Z \cong \langle A \rangle$ acts on either set $\mathcal{B}$, $\tilde{\mathcal{B}}$ by right multiplication.
\item The assignment
\begin{equation}\label{eq:bottom-row-assignment}
\left[ \begin{pmatrix}
 a & b \\ c & d
\end{pmatrix} \right] \mapsto [(c,d)]
\end{equation}
defines $\Z$-equivariant bijections 
$
\mathcal{B}  \cong \mathcal{P}/\Z^{\times}$, $\tilde{\mathcal{B}}  \cong \tilde{\mathcal{P}}/\Z^{\times}.
$
\end{enumerate}
\end{lemma}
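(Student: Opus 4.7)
The plan is to exploit the freeness of $\overline{\Gamma(2)} = \langle A\rangle * \langle B\rangle$, together with the single commutation identity
\[
\bar{S} A = B \bar{S},
\]
which follows from $[ST^2S] = B$ and $S^2 = -I$. For part (i), closure of $\mathcal{B}$ under right multiplication by $A^\ell$ is immediate from Definition \ref{def:set-B}: the operation only replaces the rightmost (and only possibly vanishing) exponent $e_m$ by $e_m + \ell$, preserving the defining conditions. For $\tilde{\mathcal{B}}$, iterating $\bar{S}A = B\bar{S}$ gives $\bar{S}A^\ell = B^\ell \bar{S}$, so I must show that $NB^\ell$ either equals the identity or belongs to $\mathcal{B}$ whenever $N \in \mathcal{B}$ and $\ell \in \Z$. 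A short case analysis on the rightmost two letters of $N$'s reduced word settles this: if $N$ ends in a nonzero $A$-power, the word simply grows; if $N = \cdots B^{f_m}$ with $e_m = 0$, the product combines into $\cdots B^{f_m + \ell}$, which stays in $\mathcal{B}$ unless $f_m + \ell = 0$, triggering further reduction, the only degenerate case being $N = B^{-\ell}$ for which $NB^\ell = I$ and hence $NB^\ell \bar{S} = \bar{S} \in \tilde{\mathcal{B}}$.

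For part (ii), the bottom-row map is well-defined into $\mathcal{P}/\Z^\times$ because elements of $\Gamma(2)$ have $c$ even, $d$ odd, $\gcd(c, d) = 1$, and for $M \in \mathcal{B}$ moreover $c_M \neq 0$ (otherwise $[M] \in \langle A\rangle$, contradicting the definition of $\mathcal{B}$). For injectivity, if $M_1, M_2 \in \mathcal{B}$ share the same bottom row modulo $\pm$, then $M_2 M_1^{-1}\cdot\infty = \infty$, so $M_2 = A^k M_1$ for some $k \in \Z$; since $M_1$ starts with $B$ in reduced form, the reduced form of $A^k M_1$ starts with $A^k$, forcing $k = 0$ by the $\mathcal{B}$-condition. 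For surjectivity, given $[(c,d)] \in \mathcal{P}/\Z^\times$, B\'ezout produces $a, b \in \Z$ with $ad - bc = 1$; reducing mod $2$ shows that $a$ is automatically odd, and shifting $(a, b) \mapsto (a + kc, b + kd)$ suitably lets me arrange $b$ even. The matrix $M = \bigl(\begin{smallmatrix} a & b \\ c & d\end{smallmatrix}\bigr)$ then lies in $\Gamma(2)$ with the prescribed bottom row; $c_M \neq 0$ forces $[M] \notin \langle A\rangle$, and I strip off any initial $A$-block from the reduced word of $[M]$ (which preserves the bottom row, since left multiplication by $A$ does) to obtain an element of $\mathcal{B}$. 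The $\Z$-equivariance is the direct computation that right multiplication by $A$ sends $(c, d) \mapsto (c, d + 2c)$.

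The analogous statements for $\tilde{\mathcal{B}}$ follow by transport along $N \mapsto N\bar{S}$: a direct matrix computation yields that $N\bar{S}$ has bottom row $(d_N, -c_N)$, which together with the exceptional element $\bar{S}$ itself (bottom row $(1, 0)$) exhausts $\tilde{\mathcal{P}}/\Z^\times$ bijectively; the $\Z$-equivariance on $\tilde{\mathcal{B}}$ combines this formula with $\bar{S}A = B\bar{S}$. The main obstacle I anticipate is the case analysis in part (i) for $\tilde{\mathcal{B}}$, where keeping careful track of which exponents are allowed to vanish in the $\mathcal{B}$-word presentation requires some bookkeeping; once the two preliminaries $\bar{S}A = B\bar{S}$ and $\mathrm{Stab}_{\overline{\Gamma(2)}}(\infty) = \langle A\rangle$ are in hand, the remainder of the lemma reduces to standard manipulations with reduced words in a free group.
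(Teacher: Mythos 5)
Your proof is correct and follows the same overall structure as the paper: the identity $\bar S A = B\bar S$ to transfer statements between $\mathcal{B}$ and $\tilde{\mathcal{B}}$, the observation that $\mathrm{Stab}_{\overline{\Gamma(2)}}(\infty)=\langle A\rangle$ to identify two elements with the same bottom row, and the free product structure $\overline{\Gamma(2)}=\langle A\rangle\ast\langle B\rangle$ for injectivity. The one place you genuinely diverge from the paper is the surjectivity argument for $\mathcal{B}\to\mathcal{P}/\Z^{\times}$. The paper constructs the normalizing power $A^{h}$ explicitly by a Euclidean-style descent on the bottom row (alternately reducing modulo $2c_0$ and $2d_0$, with a footnote devoted to why the process terminates in a $\mathcal{B}$-word). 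You instead observe directly that once a lift $M_0\in\Gamma(2)$ with the right bottom row and $c_0\neq 0$ is chosen, $[M_0]\notin\langle A\rangle$ has a reduced word of the form $A^{k}W$ with $W$ beginning with a $B$-power, so $A^{-k}[M_0]=W\in\mathcal{B}$ has the same bottom row since left multiplication by $A^{\pm 1}$ preserves it. That replaces the paper's explicit descent and its termination footnote with a one-line appeal to the free-product structure, which is cleaner; the paper's version has the mild advantage of being fully constructive and not presupposing the normal form theorem for free products. Everything else — the case analysis for $NB^{\ell}$ in part (i), the column-swap computation giving the bottom row $(d_N,-c_N)$ of $N\bar S$, the exceptional element $\bar S\leftrightarrow[(1,0)]$, and the $\Z$-equivariance — matches the paper's argument. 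One very small point worth making explicit in your write-up: in part (i) for $\tilde{\mathcal{B}}$ you phrase the reduction as a claim about $N\in\mathcal{B}$, but the element $\tilde M=\bar S$ itself corresponds to $N=1\notin\mathcal{B}$; that case is of course trivial (giving $B^{\ell}\bar S$, which is $\bar S$ if $\ell=0$ and lies in $\mathcal{B}\bar S$ otherwise), but should be stated rather than left implicit.
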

\begin{proof}
We prove part (i).  Let $M \in \mathcal{B}$, $\tilde{M} \in \tilde{\mathcal{B}}$, $\ell \in \Z$. It follows directly from the definition that $M A^{\ell} \in \mathcal{B}$. As for $\tilde{M}$, write $\tilde{M}= H\bar{S}$ for some $H \in \mathcal{B} \sqcup \{1\}$. Then \[
\tilde{M}A^{\ell} = H\bar{S}A^{\ell} = H\bar{S}A^{\ell} \bar{S} \bar{S} =HB^{\ell} \bar{S}.
\]
and we deduce  $\tilde{M}A^{\ell} \in \tilde{\mathcal{B}}$ in all cases; it equals $\bar{S}$ if $H = B^{- \ell}$ and $H B^{\ell}$ belongs to $\mathcal{B}$ otherwise.


We prove part (ii). In general, the assignment \eqref{eq:bottom-row-assignment} defines a mapping $\PSL_2(\Z) \rightarrow \Z_{\text{prim}}^2/ \Z^{\times}$, where $\Z_{\text{prim}}^2$ denotes the set of all primitive row vectors in $\Z^2$ (nonzero vectors with coprime entries). Also in general, two elements $X_1, X_2 \in \overline{\Gamma_{\theta}}$ have the same image under \eqref{eq:bottom-row-assignment}, if and only if there is $\ell \in \Z$ so that $X_2 = A^{\ell}X_1$, as a short calculation shows. 

We now prove the assertion about $\mathcal{B}$, proving that the map is well-defined, injective and surjective one after the other.

First, it maps indeed to $\mathcal{P}$, because no element of $\mathcal{B}$ can have lower left entry zero. Indeed, elements of $\overline{\Gamma(2)}$ have lower left-entry equal to zero, if and only if they belong to $\langle A \rangle$ and $\langle A \rangle \cap \mathcal{B} = \emptyset$ holds by definition. 

Let $M_1, M_2 \in \mathcal{B}$ and suppose they have the same image under \eqref{eq:bottom-row-assignment}. This implies that $M_2 = A^{\ell}M_1$ for some $\ell \in \Z$. By definition of $\mathcal{B}$ and and the fact that $A$ and $B$ freely generate $\overline{\Gamma(2)}$, this implies that $\ell = 0$, so our map is injective.

It remains to establish surjectivity. Let $(c_0,d_0) \in  \mathcal{P}$ such that $c_0 > 0$. Recall that $c_0$ is even and $d_0$ is odd by definition. Since $\gcd(2c_0, d_0) = 1$ we may choose $a_0, b_0 \in \Z$ such that $
M_0 = \begin{psmallmatrix}
 a_0 & b_0\\ c_0 & d_0
\end{psmallmatrix} \in \Gamma(2).
$
It then suffices to find $h \in \Z$ so that $A^{h} [M_0] \in \mathcal{B}$, because this element will still map to $[(c_0, d_0)]$. One may find such an $h$, via repeated reduction of the bottom entries mod $2d_0$ and $2c_0$, implemented via the formulas\footnote{Since the bottom row entries of matrices in $\Gamma(2)$ are of opposite parities, at least one of them reduces by at least 1 in absolute value, in each step in the successive reductions described above. If, say $M_0 B^{\ell_1} A^{m_1}B^{\ell_2}$ has lower left entry zero, this product equals $A^{-h}$, for some $h \in \Z$ and hence $A^{h}M_0 = B^{-\ell_2}A^{-m_1}B^{-\ell_1}$. Now $\ell_2 \neq 0$, as otherwise the process would have ended earlier, namely when $MB^{\ell_1}$ had lower left entry zero. In fact, we will not need surjectivity in the proof of Theorem \ref{thm:main-thm-radial-poincare}. It will only be used in the supplementary section \ref{sec:bessel-kloosterman}. }
\begin{align*}
 \begin{pmatrix}
a & b\\ c & d
\end{pmatrix} A_0^{m} &=\begin{pmatrix}
a & b\\ c & d
\end{pmatrix} \begin{pmatrix}
1 & 2m\\ 0 & 1
\end{pmatrix} = \begin{pmatrix}
a & b + 2am\\
c & d + 2cm
\end{pmatrix},\\
\begin{pmatrix}
a & b\\ c & d
\end{pmatrix} B_0^{\ell} &=\begin{pmatrix}
a & b\\ c & d
\end{pmatrix} \begin{pmatrix}
1 & 0\\ -2\ell & 1
\end{pmatrix} = \begin{pmatrix}
a-2b\ell & b\\
c-2d\ell & d
\end{pmatrix}.
\end{align*}
We will now deduce that the map \eqref{eq:bottom-row-assignment} also induces a bijection $\tilde{\mathcal{B}} \cong \tilde{\mathcal{P}}/ \Z^{\times}$. It is well-defined because
\begin{equation}\label{eq:right-multiply-by-S}
\begin{pmatrix}
a & b\\ c & d
\end{pmatrix} \begin{pmatrix}
0 & -1\\
1 & 0
\end{pmatrix} = \begin{pmatrix}
b & -a\\ d & -c
\end{pmatrix}.
\end{equation}
It is injective, because if $\tilde{M}_1 = M_1\bar{S}$, $\tilde{M}_2 = M_2 \bar{S}$, $M_i \in \mathcal{B} \sqcup \{ 1 \}$ map to the same element of $\tilde{\mathcal{P}}$, then, by the above general remark on the assignment \eqref{eq:bottom-row-assignment}, we have $\tilde{M}_2 = A^{\ell} \tilde{M_2}$, for some $\ell \in \Z$, equivalently $M_2 = A^{\ell}M_1$, hence $\ell = 0$. Finally, to show surjectivity, let $(c,d) \in \tilde{\mathcal{P}}$. By definition, $c, d$ are coprime integers, $c$ is odd and $d$ is even. There are two cases:
\begin{itemize}
\item $d = 0$. Then $c  \in \{-1, 1\}$ and $[S]$ maps to $[(c,d)]$ under \eqref{eq:bottom-row-assignment}.
\item $d \neq 0$. Then $[(d,-c)] \in \mathcal{P}$ and by what we have shown above, there is $M \in \mathcal{B}$ mapping to $[(d,-c)]$. By \eqref{eq:right-multiply-by-S}, the element $M\bar{S} \in \tilde{\mathcal{B}}$ then maps to $[(-c,-d)] = [(c,d)]$, as required.
\end{itemize} 
This concludes the proof of Lemma \ref{lem:identifying-sets-B-tilde-B}.
\end{proof}
The next lemma and its corollary will be used for certain estimates in \S \ref{sec:growth-estimates-poincare} in combination with the  useful identity
\begin{equation}\label{eq:surprisingly-useful-identity}
\frac{a\tau + b}{c \tau +d} = \frac{a}{c}- \frac{1}{c(c\tau + d)},
\end{equation}
which holds for all $\tau \in \H$ and all $a,b,c,d \in \R$, satisfying with $c \neq 0$ and $ad-bc  =1$.
\begin{lemma}\label{lem:a-less-than-c}
For every $ M \in \mathcal{B}$ we have $|a_M| \leq |c_M|$ and $|b_M| \leq |d_M|$ and for every $\tilde{M} \in \tilde{\mathcal{B}}$ we have $|a_{\tilde{M}}| \leq |c_{\tilde{M}}|$.
\end{lemma}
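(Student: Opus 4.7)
The plan is to run a ping-pong argument on $\P^1(\R)$, exploiting the free product structure $\overline{\Gamma(2)} = \langle A\rangle * \langle B\rangle$. Since $c_M$ is even and $d_M$ is odd for any $M\in\overline{\Gamma(2)}$, neither entry vanishes, so the two claims $|a_M|\leq |c_M|$ and $|b_M|\leq |d_M|$ are equivalent to $|M(\infty)|\leq 1$ and $|M(0)|\leq 1$, viewing $M$ as a Möbius transformation of $\P^1(\R)$. Accordingly, I would introduce the sets
\[
X_A = \{z\in\R: |z|\geq 1\}\cup\{\infty\}, \qquad X_B = \{z\in\R: |z|\leq 1\},
\]
and reduce the first assertion to showing $M(\{0,\infty\})\subset X_B$ for every $M\in\mathcal{B}$.

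The engine of the argument is the pair of ping-pong inclusions $A^e(X_B)\subset X_A$ and $B^f(X_A)\subset X_B$ for every nonzero integer $e,f$. The first is immediate from $A^e(z)=z+2e$ and $|z+2e|\geq 2|e|-|z|\geq 1$ when $|z|\leq 1$ and $|e|\geq 1$. For the second, $B^f(z)=z/(1-2fz)$, and the needed inequality $|1-2fz|^2\geq |z|^2$ for real $z$ with $|z|\geq 1$ and $|f|\geq 1$ is handled by the factorization
\[
(1-2fz)^2-z^2 = \bigl(1-(2f+1)z\bigr)\bigl(1-(2f-1)z\bigr).
\]
Since $|2f\pm 1|\geq 1$, a short case check on the sign of $fz$ shows that the two factors on the right have a common sign under our hypothesis, so their product is non-negative; the boundary cases $z=\infty$ and $z=\pm 1$ are settled by direct inspection.

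With these inclusions established, I would trace the orbits of $\infty$ and $0$ through the reduced word $M=B^{f_1}A^{e_1}\cdots B^{f_m}A^{e_m}\in\mathcal{B}$, applied right to left, using that $f_1,\dots,f_m$ and $e_1,\dots,e_{m-1}$ are all nonzero (while $e_m$ may vanish). For $\infty\in X_A$: the step $A^{e_m}$ fixes $\infty$, then $B^{f_m}$ sends $X_A$ into $X_B$, and each subsequent factor flips $X_A\leftrightarrow X_B$, so the terminal $B^{f_1}$ lands in $X_B$. For $0\in X_B$, one splits on whether $e_m=0$: if $e_m=0$ then $B^{f_m}(0)=0\in X_B$ and the alternation begins one step later with $A^{e_{m-1}}$ applied to $0$, giving $2e_{m-1}\in X_A$; if $e_m\neq 0$ then $A^{e_m}(0)=2e_m\in X_A$ and the alternation starts immediately. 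In either case the orbit terminates in $X_B$.

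Finally, for $\tilde M\in\tilde{\mathcal{B}}$: the case $\tilde M=\overline{S}$ is trivial, since $|a_{\overline{S}}|=0\leq 1=|c_{\overline{S}}|$. Otherwise $\tilde M = M\overline{S}$ for some $M\in\mathcal{B}$, and the matrix identity \eqref{eq:right-multiply-by-S} yields $a_{\tilde M}=b_M$ and $c_{\tilde M}=d_M$, so the desired inequality $|a_{\tilde M}|\leq |c_{\tilde M}|$ is exactly the second bound $|b_M|\leq |d_M|$ proved above. The only step requiring any real computation is the quadratic factorization that powers $B^f(X_A)\subset X_B$; the rest is combinatorial bookkeeping on the word structure defining $\mathcal{B}$.
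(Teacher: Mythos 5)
Your proof is correct and takes a genuinely different route from the paper's. The paper argues by direct induction on the block length of $M \in \mathcal{B}$, left-multiplying by a block $B_0^{-m}A_0^{\ell}$ at each step and propagating the matrix-entry inequalities $|a|\le|c|$, $|b|\le|d|$ through an explicit $2\times 2$ computation. You instead recast those inequalities as the dynamical statement that $M$ sends both $0$ and $\infty$ into the closed unit interval, and run a ping-pong argument on $\P^1(\R)$ with the table $X_A, X_B$, applying the reduced word letter by letter from the right. The two arguments rest, at bottom, on the same scalar inequality: the paper reduces its inductive step to $|y|\le|2my+1|$ for $|y|\ge 1$, $|m|\ge 1$ (proved via $|2my+1|\ge 2|y|-1\ge|y|$), and you prove the identical bound $|z|\le|1-2fz|$ via the factorization $(1-2fz)^2-z^2=(1-(2f+1)z)(1-(2f-1)z)$. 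Your framing is arguably cleaner: it makes the role of the free-product structure $\overline{\Gamma(2)}=\langle A\rangle * \langle B\rangle$ transparent and dispenses with bookkeeping of all four matrix entries. One small imprecision: you justify $c_M\neq 0$ for $M\in\mathcal{B}$ by parity, but an even integer can vanish; the correct reason, recorded in the paper's proof of Lemma \ref{lem:identifying-sets-B-tilde-B}, is that $\mathcal{B}\cap\langle A\rangle=\emptyset$. This does not affect your argument, and in fact $c_M\neq 0$ is a corollary of it, since the orbit of $\infty$ terminates in the bounded set $X_B$ rather than at $\infty$.
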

\begin{proof}
Since  right-multiplication by $\bar{S}$ interchanges columns  \eqref{eq:right-multiply-by-S} and since the upper left entry of $\bar{S}$  is zero, it suffices to prove the assertion about elements of $\mathcal{B}$.  We do this via induction on the word length of $M \in \mathcal{B}$, but we will add letters \emph{on the left} in the inductive step. We first compute generally, for any $a,b,c,d,m,\ell \in \Z$, that
\[
B_0^{-m}A_0^{\ell} \begin{pmatrix}
 a & b \\ c & d
\end{pmatrix} = \begin{pmatrix}
a + 2\ell c & b + 2 \ell d\\
2 am + c(1+4 m \ell) & 2m b + d(4 m \ell + 1)
\end{pmatrix}.
\]

\emph{Base case:} In the above, take $a =d = 1$, $c = b = 0$ and assume that $m \neq 0$.  We need to show that $|1| \leq |2m|$ and $|2 \ell| \leq |4m \ell +1|$. This is immediate.

\emph{Inductive step:} We assume that $|a| \leq |c|$, $|b| \leq |d|$ and that $m \ell \neq 0$. We need to show:
\begin{enumerate}
\item $|a+ 2 \ell c| \leq |2 am + c(1+ 4 m \ell)|$,
\item $|b+2 \ell d| \leq |2mb + d(1 + 4 m \ell)|$.
\end{enumerate}
If $c = 0$, then (1) holds trivially and  if $d = 0$, then (2) holds trivially (since $m \neq 0$). We therefore assume that $cd\neq 0$. Dividing then (1) by $|c|$ and (2) by $|d|$, the inductive hypothesis reduces our task to showing that for all $q \in [-1,1] \cap \Q$, 
\[
|q+ 2 \ell| \leq |2m q + (1+4 m \ell)| = |2m(q+2\ell) + 1|.
\]
Introduce $y = q+ 2\ell$, so that what we want to show is $|y| \leq |2my +1|$. But indeed,
\[
|2my+1| \geq 2|m||y|-1 \geq 2|y|-1 \geq |y|,
\]
since $|m| \geq 1$ and $|y| = |2 \ell + q|\geq 2- |q| \geq 1$ since $|q| \leq 1$.
\end{proof}
\begin{corollary}\label{cor:a-less-than-c}
Let $\Omega \subset \H$ be a compact set. Then $\sup_{( \tau, M ) \in \Omega \times( \mathcal{B} \cup \tilde{\mathcal{B}})}{|M\tau|} < \infty$.
\end{corollary}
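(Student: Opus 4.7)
The plan is to reduce the bound on $|M\tau|$ to the identity \eqref{eq:surprisingly-useful-identity}, which is tailor-made for the situation because Lemma \ref{lem:a-less-than-c} already bounds the ``constant'' term $a_M/c_M$. The first step is to verify that for every $M \in \mathcal{B} \cup \tilde{\mathcal{B}}$, the lower-left entry $c_M$ is nonzero, so that \eqref{eq:surprisingly-useful-identity} is applicable. For $M \in \mathcal{B}$ this is immediate from Lemma \ref{lem:identifying-sets-B-tilde-B}(ii), since the bottom row of $M$ lies in $\mathcal{P}$, where $c$ is a nonzero even integer, hence $|c_M| \geq 2$. For $\tilde{M} \in \tilde{\mathcal{B}}$, the bottom row lies in $\tilde{\mathcal{P}}$ where $c$ is odd, hence $|c_{\tilde{M}}| \geq 1$; this includes the explicit element $\bar{S}$, whose bottom row is $(1,0) \in \tilde{\mathcal{P}}$. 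In particular, $|c_M| \geq 1$ uniformly on $\mathcal{B} \cup \tilde{\mathcal{B}}$.

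Next, I would use compactness of $\Omega$ to set $y_0 = \inf_{\tau \in \Omega} \imag(\tau) > 0$. For any $M \in \mathcal{B} \cup \tilde{\mathcal{B}}$ and $\tau \in \Omega$, the identity \eqref{eq:surprisingly-useful-identity} together with the triangle inequality yields
\[
|M\tau| \;\leq\; \frac{|a_M|}{|c_M|} + \frac{1}{|c_M|\,|c_M \tau + d_M|}.
\]
By Lemma \ref{lem:a-less-than-c}, the first term is at most $1$. For the second term, since $\imag(c_M \tau + d_M) = c_M \imag(\tau)$, we have $|c_M \tau + d_M| \geq |c_M|\,\imag(\tau) \geq |c_M|\, y_0$, so
\[
\frac{1}{|c_M|\,|c_M \tau + d_M|} \;\leq\; \frac{1}{c_M^2 \, y_0} \;\leq\; \frac{1}{y_0},
\]
using $|c_M| \geq 1$. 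Combining these estimates gives $|M\tau| \leq 1 + 1/y_0$, a bound independent of $M$ and of $\tau \in \Omega$, which is precisely what is claimed.

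There is no genuine obstacle here; the only points of caution are to confirm that $c_M \neq 0$ on all of $\mathcal{B} \cup \tilde{\mathcal{B}}$ (so that \eqref{eq:surprisingly-useful-identity} is meaningful), and to verify that the special element $\bar{S} \in \tilde{\mathcal{B}}$ is handled correctly by the same formula, both of which are immediate from the setup in \S \ref{sec:a-particular-set-of-words}.
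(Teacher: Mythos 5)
Your proof is correct. The core argument—combining the identity \eqref{eq:surprisingly-useful-identity} with Lemma \ref{lem:a-less-than-c} to obtain $|M\tau| \le 1 + 1/|c_M\tau + d_M|$—is exactly the paper's. Where you diverge is in the final lower bound for $|c_M\tau + d_M|$: the paper's stated proof invokes continuity of the shortest-vector function $z \mapsto \mathfrak{s}(\Lambda_z)$ on $\H$ to get $|c\tau+d| \ge \inf_{z\in\Omega}\mathfrak{s}(\Lambda_z)$, whereas you use the elementary estimate $|c_M\tau+d_M| \ge |\imag(c_M\tau+d_M)| = |c_M|\imag(\tau) \ge \imag(\tau)$ together with compactness of $\Omega$. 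This is precisely the simplification the paper notes in the remark immediately following the corollary, which also observes that your version generalizes the statement to any $\Omega$ with $\inf_{\tau\in\Omega}\imag(\tau) > 0$. So your route is slightly more elementary and slightly more general; both approaches are valid, and the only thing you add that the paper leaves implicit is the explicit check (via Lemma \ref{lem:identifying-sets-B-tilde-B}(ii) and the description of $\mathcal{P}$, $\tilde{\mathcal{P}}$) that $c_M \neq 0$ for every $M \in \mathcal{B} \cup \tilde{\mathcal{B}}$, which is indeed needed for \eqref{eq:surprisingly-useful-identity} to apply.
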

\begin{proof} 
For $z \in \H$ write $\Lambda_{z} = \Z z + \Z \subset \C$ for the lattice generated by $z$ and $1$ and, for any lattice $\Lambda \subset \C$, write $\mathfrak{s}(\Lambda) = \inf_{0 \neq \lambda \in \Lambda}{|\lambda|}$ for the length of its shortest vectors. The assignment $z \mapsto \mathfrak{s}(\Lambda_z)$  defines a continuous function $\H \rightarrow (0, + \infty)$, as is well-known. Now let  $M  \in \mathcal{B} \cup \tilde{\mathcal{B}}$ be represented by $\begin{psmallmatrix} a &b \\ c &d \end{psmallmatrix}$ and let $\tau \in \Omega$. We have $|c|  \geq 1$ and by Lemma \ref{lem:a-less-than-c} and \eqref{eq:surprisingly-useful-identity}, 
\[
|M\tau|= \left| \frac{a}{c}- \frac{1}{c(c\tau + d)} \right| \leq 1 + \frac{1}{|c\tau +d|} \leq 1 +  \frac{1}{\inf_{z \in \Omega}{\mathfrak{s}(\Lambda_{z})}},
\]
which is finite and depends only on $\Omega$. 
\end{proof}
\begin{remark-non}
In the above proof, instead of using continuity of the shortest vector function, one can simply use that $|c\tau + d| \geq \imag(\tau)$, for $c \neq 0$ and thus generalize the corollary to subsets $\Omega$ satisfying $\inf_{\tau \in \Omega}{(\imag(\tau))} >0$.
\end{remark-non}
\subsection{Definition of the generating series and the  basis functions}\label{sec:definition-of-generating-series}
With the preparations from the previous subsections, we are now ready to give solutions to the functional equations in \S \ref{sec:generating-series-and-functional-equations} and give the definition of the functions $b_{p,n}, \tilde{b}_{p,n}$ entering Theorem \ref{thm:main-thm-radial-poincare}.  Let $p \geq 5$ be an integer. For $\tau \in \H$ and $r \in \C$ define the series
\begin{align}
F_p(\tau,r) &= -\sum_{M \in \mathcal{B}}{e^{\pi i \tau r^2}|_{p/2}M} =-\sum_{M \in \mathcal{B}}{(\Theta_3(M\tau)/\Theta_3(\tau))^{-p} e^{\pi i r^2 M\tau }}, \label{eq:def:Fp} \\
\tilde{F}_p(\tau,r) &= \sum_{M \in \tilde{\mathcal{B}}}{e^{\pi i \tau r^2}|_{p/2}M} =\sum_{M \in \tilde{\mathcal{B}}}{(\Theta_3(M\tau)/\Theta_3(\tau))^{-p} e^{\pi i r^2 M\tau }}. \label{eq:def:tildeFp}
\end{align}
We now show they converge absolutely and uniformly on compact sets. So let $\Omega_1 \subset \H$ and $\Omega_2 \subset \C$ be compact subsets. Then by \eqref{eq:automorphic-factors-the-same} and by Corollary \ref{cor:a-less-than-c}, we have, for all $M \in \mathcal{B} \cup \tilde{\mathcal{B}}$, $(\tau,r) \in \Omega_1 \times \Omega_2$,
\[
\left| (\Theta_3(M\tau)/\Theta_3(\tau))^{-p} e^{\pi i r^2 M\tau } \right| \leq  \frac{\exp{(\pi |r|^2 |M\tau |)}}{|c_M \tau + d_M|^{p/2}} \ll_{\Omega_1, \Omega_2} \frac{1}{|c_M \tau + d_M|^{p/2}},
\]
By compactness, there exists $C =C_{\Omega_1} > 0$ with the property that  $|c_M i + d_M| \leq C|c_M \tau + d_M|$ for all $M \in \mathcal{B} \cup \tilde{\mathcal{B}}$ and all $\tau \in \Omega_1$. We deduce
\begin{equation}\label{eq:uniform-summand-bound-for-F-p}
\sup_{(\tau,r) \in \Omega_1 \times \Omega_2}{\left|(\Theta_3(M\tau)/\Theta_3(\tau))^{-p} e^{\pi i r^2 M\tau  }\right|} \ll_{\Omega_1, \Omega_2} \frac{1}{|c_M i + d_M|^{p/2}},
\end{equation}
where the implied constant does not depend upon $M \in \mathcal{B} \cup \tilde{\mathcal{B}}$. Since $ p \geq 5$, the sequence  $( \sum_{0 < c^2 + d^2 \leq N}{|ci+d|^{-p/2}})_{N \in \N}$ is bounded and increasing in $[0, \infty)$, which, combined with \eqref{eq:uniform-summand-bound-for-F-p} and the injectivity of the mappings in Lemma \ref{lem:identifying-sets-B-tilde-B}, implies that the series defining  $F_p, \tilde{F}_p$ converge pointwise absolutely and uniformly on $\Omega_1 \times \Omega_2$ and thus define continuous functions on $\H \times \C$ that are holomorphic in each variable separately.
  
Part (ii) of Lemma \ref{lem:identifying-sets-B-tilde-B} asserted that $\mathcal{B}$ and $\tilde{\mathcal{B}}$ are stable under right multiplication by powers of $A$. By absolute convergence, we deduce that the functions $F_p$, $\tilde{F}_p$ are  both $2$-periodic in the first argument. By definition of the set  $\tilde{\mathcal{B}}$ and because of the minus sign in the definition of $F_p(\tau,r)$, they are moreover related by the functional equation 
\begin{equation}
F_p(\tau,r) + (-i \tau)^{-p/2}\tilde{F}_p(-1/\tau) = e^{\pi i r^2 \tau}.
\end{equation}
Replacing $r$ by the Euclidean norm of $x \in \R^p$, gives the desired solutions to the system of functional equations in \S \ref{sec:generating-series-and-functional-equations}. For $n \in \Z$, we define
\begin{align}
b_{p,n}(r) &= \frac{1}{2} \int_{iy_0 + [-1,1]}{F_p(\tau, r) e^{- \pi i n \tau} d \tau}, \label{eq:def-b-p-n}\\
 \tilde{b}_{p,n}(r) &= \frac{1}{2} \int_{iy_0 + [-1,1]}{ \tilde{F}_p(\tau, r) e^{- \pi i n \tau} d \tau}, \label{eq:def-tilde-b-p-n}
\end{align}
for any $y_0 > 0$, as the integrals are independent of $y_0$ (\S \ref{sec:two-periodic-hol-functions}). By continuity of $F_p$ and $\tilde{F}_p$ and holomorphy in the second argument, the functions $r \mapsto b_{p,n}(r)$ and $r \mapsto \tilde{b}_{p,n}(r)$ are entire and they are clearly even. By the general remarks of \S \ref{sec:notation-radial-near-origin}, the functions $x \mapsto b_{p,n}(|x|)$, $x \mapsto  \tilde{b}_{p,n}(|x|)$ are smooth on $\R^d$, but we will also prove this directly in the next section. 
\subsection{Upper bounds for Fourier coefficients}\label{sec:growth-estimates-poincare}
To complete our implementation of the general strategy explained in \S \ref{sec:generating-series-and-functional-equations} and thus prove Theorem 2, we must give upper bounds for the Fourier coefficients $b_{p,n}(r)$ and $\tilde{b}_{p,n}(r)$ defined in \eqref{eq:def-b-p-n}, \eqref{eq:def-tilde-b-p-n} in terms of $n$ and $r$. We will do so by first bounding the generating functions $F_p(\tau,r)$, $\tilde{F}_{p}(\tau,r)$ themselves and then applying the triangle inequality to the integrals for a suitable height $y_0 > 0$. In the end, we will take $y_0 \asymp p/n$, but also want the upper bound to hold for \emph{all}  pairs $(n,p)  \in\N \times \Z_{\geq 5}$, since we implicitly sum over them in our main interpolation formula. We therefore seek bounds for $F_p(\tau,r)$ and $\tilde{F}_p(\tau,r)$ that are equally uniform in $y_0 = \imag(\tau)$. To this end, we define, for any real $k > 2$, the auxiliary functions $U_k, \tilde{U}_k : \H \rightarrow (0,+\infty)$ by
\begin{equation}\label{eq:definition-U-tilde-U}
U_k(\tau) = \sum_{M \in \mathcal{B}}{|c_M \tau + d_M|^{-k}}, \quad \tilde{U}_k(\tau) = \sum_{M \in \tilde{\mathcal{B}}}{|c_M \tau + d_M|^{-k}}.
\end{equation}
Note that $|F_p(\tau,r)| \leq U_{p/2}(\tau)$ and $|\tilde{F}_p(\tau,r)| \leq \tilde{U}_{p/2}(\tau)$ for all $(\tau,r) \in \H \times \R$.
\begin{lemma}\label{lem:upper-bounds-for-U}
There exists a constant $C_0 > 0$ with the following property. For all $\varepsilon  \in (0,1/8]$, all $k \geq 2+2\varepsilon$, all $x \in [-1,1]$ and all $y_0 >0$, we have
\[
\max{(U_{k}(x+iy_0), \tilde{U}_{k}(x+iy_0))   } \leq C_0 \varepsilon^{-2} (y_0^{-k} + y_0^{-k/2}).
\]
\end{lemma}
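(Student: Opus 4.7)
\medskip

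The plan is to reduce the sums defining $U_k$ and $\tilde U_k$ to lattice-style sums via the bijections of Lemma \ref{lem:identifying-sets-B-tilde-B}, and then estimate these lattice sums by the classical ``maximum plus integral'' comparison, tracking carefully how the estimate blows up as $k \searrow 2$.

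By Lemma \ref{lem:identifying-sets-B-tilde-B}(ii), the assignment $M \mapsto [(c_M,d_M)]$ is a bijection $\mathcal{B} \cong \mathcal{P}/\Z^{\times}$ and $\tilde{\mathcal{B}} \cong \tilde{\mathcal{P}}/\Z^{\times}$, so
\[
U_k(\tau) = \tfrac12 \sum_{(c,d)\in\mathcal{P}} |c\tau+d|^{-k}, \qquad \tilde{U}_k(\tau) = \tfrac12 \sum_{(c,d)\in\tilde{\mathcal{P}}} |c\tau+d|^{-k}.
\]
Dropping the coprimality condition only inflates the sums, and both reduce to bounding
\[
\Sigma(\tau) = \sum_{\substack{c\in 2\Z+\epsilon\\ c\neq 0}} \sum_{d\in 2\Z+(1-\epsilon)} \bigl((cx+d)^2 + c^2y_0^2\bigr)^{-k/2},
\]
for $\epsilon\in\{0,1\}$. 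For each fixed $c\neq 0$, the function $d\mapsto ((cx+d)^2+c^2y_0^2)^{-k/2}$ is unimodal in the real variable $cx+d$, with maximum $(|c|y_0)^{-k}$. The standard sum-by-integral comparison over the arithmetic progression $d\in 2\Z+(1-\epsilon)$ (which has step $2$) yields
\[
\sum_d \bigl((cx+d)^2+c^2y_0^2\bigr)^{-k/2} \leq (|c|y_0)^{-k} + \tfrac{1}{2} I_k (|c|y_0)^{1-k},
\]
where $I_k = \int_\R (v^2+1)^{-k/2}\,dv = \sqrt{\pi}\,\Gamma((k-1)/2)/\Gamma(k/2)$ is bounded uniformly for $k\geq 2+2\varepsilon$ with $\varepsilon\leq 1/8$.

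Summing over nonzero $c$ of the appropriate parity gives
\[
\Sigma(\tau) \leq 2\zeta(k)\, y_0^{-k} + I_k\,\zeta(k-1)\, y_0^{1-k}.
\]
Here $\zeta(k) \leq \zeta(2)$ is harmless, whereas the Laurent expansion $\zeta(s) = \frac{1}{s-1}+O(1)$ near $s=1$ gives $\zeta(k-1) = \zeta(1+2\varepsilon) \leq C/\varepsilon$ for some absolute constant $C$. Splitting into the two ranges $y_0\leq 1$ and $y_0\geq 1$ completes the estimate: in the first range $y_0^{1-k}\leq y_0^{-k}$, while in the second $k\geq 2$ gives $y_0^{1-k}\leq y_0^{-k/2}$. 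Hence in every case
\[
\max(U_k(\tau),\tilde U_k(\tau)) \leq C_0\,\varepsilon^{-1}\bigl(y_0^{-k}+y_0^{-k/2}\bigr),
\]
which is stronger than the claimed bound with $\varepsilon^{-2}$ (leaving a comfortable margin to absorb absolute constants).

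The only place any care is needed is tracking the $\varepsilon$-dependence: the singularity of $\zeta(s)$ at $s=1$ is what forces the factor of $\varepsilon^{-1}$, and the wastefulness in writing $\varepsilon^{-2}$ in the statement presumably accommodates implied constants uniformly in $\varepsilon\in(0,1/8]$. The rest of the argument is a direct integral comparison that does not depend on any subtle features of $\mathcal{B}$ or $\tilde{\mathcal{B}}$ beyond the identification with pairs $(c,d)$ of given parities with $c\neq 0$.
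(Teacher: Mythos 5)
Your proof is correct, and it improves on the paper's bound. The overall shape of the argument is the same — pass from $M\in\mathcal{B}\cup\tilde{\mathcal{B}}$ to bottom rows $(c,d)$ via Lemma \ref{lem:identifying-sets-B-tilde-B}, drop coprimality, and estimate a lattice-style sum — but the key step differs. You handle the inner $d$-sum with a single sharp sum-versus-integral comparison, producing one factor of $\zeta(k-1)\leq\zeta(1+2\varepsilon)\ll\varepsilon^{-1}$ and an absolutely bounded constant $I_k=\sqrt{\pi}\,\Gamma((k-1)/2)/\Gamma(k/2)$ (indeed bounded on $k\geq 2$, since $\Gamma$ has no poles in $\mathrm{Re}\,s\geq 1/2$ and the ratio tends to $0$ as $k\to\infty$). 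The paper instead reparametrizes $d$ mod $c$, writing the sum over $c\geq 1$, $d\in\{1,\dots,c\}$, $\ell\in\Z$, and estimates crudely: $(x+d/c+\ell)^2+y_0^2\geq y_0^2$ for $|\ell|\leq 2$ and $(x+d/c+\ell)^2+y_0^2\geq 2(|\ell|-2)y_0$ for $|\ell|\geq 3$. This second bound loses a power and yields a second near-pole zeta factor, $\sum_{|\ell|\geq 3}(|\ell|-2)^{-k/2}=2\zeta(k/2)\leq 2\zeta(1+\varepsilon)\ll\varepsilon^{-1}$, so the paper genuinely produces the $\varepsilon^{-2}$ appearing in the statement as a product $\zeta(1+2\varepsilon)\cdot\zeta(1+\varepsilon)$, not merely as slack for implied constants as you speculate at the end. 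Your integral comparison avoids the second factor entirely, giving the stronger $\varepsilon^{-1}(y_0^{-k}+y_0^{-k/2})$. Both proofs establish the lemma; yours is slightly tighter. One small remark: the unimodal sum-integral comparison $\sum_d f(d)\leq\max f+\tfrac{1}{h}\int f$ requires the brief observation that the lattice point closest to the mode can be absorbed into the maximum while the two one-sided tails telescope into the integral, which is worth a sentence if you write this up in full.
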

\begin{proof}
By absolute convergence and the injectivity assertions from Lemma \ref{lem:identifying-sets-B-tilde-B} and by simply enlarging the sets $\mathcal{P}$, $\tilde{\mathcal{P}}$,  we have
\[
\max{(U_{k}(x+iy_0), \tilde{U}_{k}(x+iy_0))   } \leq \sum_{c=1}^{\infty}{ \sum_{d=1}^{c}{  \sum_{\ell \in \Z}{  \frac{1}{ \big((cx+d+\ell c)^2 + (cy_0)^2 \big)^{k/2}}   }  }   }.
\]
To bound the denominators from below, we first write 
\[
(cx+d+\ell c)^2 + (cy_0)^2 = c^2 \big(( x+d/c + \ell)^2+ y_0^2\big)
\]
and then use, in the range $|\ell| \leq 2$, the trivial estimate 
\[
( x+d/c + \ell)^2+ y_0^2 \geq y_0^2,
\]
while in the range $|\ell| \geq 3$, we use 
\[
( x+d/c + \ell)^2+ y_0^2 \geq 2|x+d/c + \ell| y_0 \geq 2(|\ell| -2)y_0,
\]
which holds since $|x| \leq 1$ and $|d/c| \leq 1$ for all terms in the series. We deduce that
\[
\max{(U_{k}(x+iy_0), \tilde{U}_{k}(x+iy_0))   } \leq \sum_{c=1}^{\infty}{ c^{1-k} \Big(5 y_0^{-k} + (2y_0)^{-k/2} \sum_{|\ell| \geq 3}{(|\ell|-2)^{-k/2}} \Big)   },
\]
which is now a product. For $s>1$, let $\zeta(s) = \sum_{n=1}^{\infty}{n^{-s}}$. The sum over $|\ell| \geq 3$ is at most $2\zeta(1+ \varepsilon)$, while the sum over $c$ is at most $\zeta(1+ 2\varepsilon)$. We conclude the analysis by recalling that $\lim_{s \rightarrow 1}{(s-1) \zeta(s)} = 1$. 
\end{proof}
\begin{corollary}\label{cor:b_p-zero-for-negative-n}
If $n \leq 0$ then $b_{p,n} = 0= \tilde{b}_{p,n}$.
\end{corollary}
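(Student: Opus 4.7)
The plan is to exploit the $y_0$-independence of the defining integrals \eqref{eq:def-b-p-n}, \eqref{eq:def-tilde-b-p-n} and then send $y_0 \to \infty$, showing that the integrand decays to zero in the appropriate uniform sense. Since the functions $r \mapsto b_{p,n}(r), \tilde{b}_{p,n}(r)$ are entire, it suffices to show that they vanish on $\R$, so I fix $r \in \R$ throughout.

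The key observation is that for real $r$ and any $M \in \mathcal{B} \cup \tilde{\mathcal{B}}$ and $\tau \in \H$, we have $|e^{\pi i r^2 M\tau}| = e^{-\pi r^2 \imag(M\tau)} \leq 1$, since $\imag(M\tau) > 0$. Combining this with \eqref{eq:automorphic-factors-the-same} and the termwise bound from \eqref{eq:def:Fp}, \eqref{eq:def:tildeFp}, one gets
\[
|F_p(\tau,r)| \leq U_{p/2}(\tau), \qquad |\tilde{F}_p(\tau,r)| \leq \tilde{U}_{p/2}(\tau),
\]
uniformly in $r \in \R$, exactly as noted just before Lemma \ref{lem:upper-bounds-for-U}.

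Now fix $\varepsilon = 1/8$ (so that $p/2 \geq 5/2 \geq 2 + 2\varepsilon$), apply Lemma \ref{lem:upper-bounds-for-U} with $k = p/2$, and parametrize the contour of integration by $\tau = x + iy_0$, $x \in [-1,1]$. The triangle inequality and $|e^{-\pi i n \tau}| = e^{\pi n y_0}$ give
\[
|b_{p,n}(r)| \;\leq\; \frac{1}{2}\int_{-1}^{1} U_{p/2}(x+iy_0)\, e^{\pi n y_0}\, dx \;\leq\; C_1\bigl(y_0^{-p/2} + y_0^{-p/4}\bigr) e^{\pi n y_0},
\]
for a constant $C_1$ independent of $y_0$, and the same bound holds with $b_{p,n}$ replaced by $\tilde{b}_{p,n}$. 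For $n \leq 0$ the exponential factor is at most $1$, while $p \geq 5$ implies $p/4 \geq 5/4 > 0$, so the right-hand side tends to $0$ as $y_0 \to \infty$. Since the left-hand side does not depend on $y_0$, we conclude $b_{p,n}(r) = \tilde{b}_{p,n}(r) = 0$, as required.

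No real obstacle is expected here: the only thing to check carefully is that the bound on $U_{p/2}$ from Lemma \ref{lem:upper-bounds-for-U} really dominates $|F_p|$ uniformly in $r \in \R$, which works because of the sign of $\imag(M\tau)$, and that $p \geq 5$ is sufficient to make the polynomial factor $y_0^{-p/4}$ vanish in the limit.
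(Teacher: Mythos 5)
Your proof is correct and is essentially identical to the paper's own argument: reduce to $r \in \R$ by analyticity, bound $|F_p(\tau,r)|$ and $|\tilde F_p(\tau,r)|$ by $U_{p/2}(\tau)$ and $\tilde U_{p/2}(\tau)$, invoke Lemma \ref{lem:upper-bounds-for-U}, and let $y_0 \to \infty$ using $e^{\pi n y_0}\leq 1$ for $n\leq 0$.
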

\begin{proof}
By analyticity, it suffices to show that  $b_{p,n}(r) = 0= \tilde{b}_{p,n}(r)$ for all $r \in \R$. By Lemma \ref{lem:upper-bounds-for-U} and \eqref{eq:def-b-p-n} we have, for all $y_0 > 0$ and $r \in \R$,
\[
|b_{p,n}(r)| \leq e^{\pi n y_0} \sup_{\tau \in iy_0 +[-1,1]}{U_{p/2}(\tau)}  \ll e^{\pi n y_0} \big( y_0^{-p/2} + y_0^{-p/4} \big),
\]
where the implied constant is independent of $p, n,r$ and $y_0$.
Since $e^{\pi n y_0} \leq 1$ we can let $y_0 \rightarrow \infty$ to deduce $b_{p,n}(r) = 0$. The argument for $\tilde{b}_{p,n}$ is very similar.
\end{proof}
In the remainder of \S \ref{sec:growth-estimates-poincare}, we prove assertions (i) and (ii) of Theorem \ref{thm:main-thm-radial-poincare}, that is, we prove the claimed upper bounds for $\partial^{\alpha}b_{p,n}(|x|)$, $\partial^{\alpha}\tilde{b}_{p,n}(|x|)$ for $x \in \R^d$ and $\alpha \in \N_0^d$. In view of Corollary \ref{cor:b_p-zero-for-negative-n} and the general remarks of \S \ref{sec:generating-series-and-functional-equations}, this will then also prove the radial interpolation formula \eqref{eq:interpolation-formula-radial-in-main-thm-poincare} and complete the proof of Theorem \ref{thm:main-thm-radial-poincare}.  We focus on the analysis of $F_p$ and $b_{p,n}$; the one for $\tilde{F}_p$ and $\tilde{b}_{p,n}$ is the same, because of the maximum in Lemma \ref{lem:upper-bounds-for-U}.  We work with the following parameters and notations.
\begin{itemize}
\item A real number $\varepsilon \in (0, 1/8]$.
\item A constant $C_0>0$ having the property stated in Lemma \ref{lem:upper-bounds-for-U}. Until the end of \S \ref{sec:growth-estimates-poincare}, a constant will be called \emph{absolute}, if it depends at most $C_0$.
\item For each $0 \leq j \leq |\alpha|$, the polynomial $P_j = P_{\alpha, d, j} \in \Z[ 2 \pi i][x_1, \dots, x_d]$ of degree at most $|\alpha|$ with the property that for all $z \in \C$ and $x \in \R^d$,
\begin{equation}\label{eq:partial-derivative-gaussian}
\partial_x^{\alpha}{e^{\pi i z|x|^2}} =  e^{\pi i z |x|^2} \sum_{j=0}^{|\alpha|}{P_j(x)z^j}.
\end{equation}
These will play no role if $\alpha = 0$, a case worth focusing on in  a first reading.
\item The parameter $\sigma = \sigma_{p, \varepsilon} = p/4-(1+ \varepsilon) \geq 1/8$.
\item For $|x|>0$, the shorthand $B_{\sigma}(|x|) = \left(\frac{\sigma}{\pi e |x|^2} \right)^{\sigma} = \sup_{y \in (0, + \infty)}{y^{\sigma}e^{-\pi y|x|^2}}$.
\end{itemize}
To start, we differentiate \eqref{eq:def-b-p-n}, giving 
\begin{equation}\label{eq:differentiating-bpn-under-integral}
\partial_x^{\alpha}b_{p,n}(|x|) =  \frac{1}{2}\int_{iy_0 + [-1,1]}{\partial_x^{\alpha}F_p(\tau,|x|) e^{- \pi  i n \tau }d \tau}.
\end{equation}
To bound $\partial_x^{\alpha}F_p(\tau,|x|)$, we apply \eqref{eq:partial-derivative-gaussian} with $z = M\tau = \frac{a_M}{c_M}- \frac{1}{c_M(c_M\tau + d_M)}$ and obtain
\[
\partial_x^{\alpha}{e^{\pi i (M\tau)|x|^2}} =  e^{\pi i (M\tau) |x|^2} \sum_{j=0}^{|\alpha|}{P_j(x) \sum_{t=0}^{j}{\binom{j}{t} (a_M/c_M)^{j-t} (-c_M(c_M\tau + d_M))^{-t} }}.
\]
We have $|c_M| \geq 1$ by Lemma \ref{lem:identifying-sets-B-tilde-B} and  $|a_M/c_M| \leq 1$ by Lemma \ref{lem:a-less-than-c}, hence
\[
|\partial_x^{\alpha}{e^{\pi i (M \tau) |x|^2}}| \leq  e^{-\pi \imag( M\tau) |x|^2} \sum_{j=0}^{|\alpha|}{|P_j(x)| \sum_{t=0}^{j}{\binom{j}{t}}{ |c_M \tau +d_M|^{-t}   }}.
\]
We may now either use the trivial bound $e^{-\pi \imag( M\tau) |x|^2} \leq 1$, or, if $|x|>0$, 
\begin{align*}
e^{-\pi \imag( M\tau) |x|^2} = \imag(M\tau)^{\sigma}e^{-\pi \imag( M\tau) |x|^2} \imag(M \tau)^{- \sigma}  \leq B_{\sigma}(|x|) |c_M\tau +d_M|^{2 \sigma} \imag(\tau)^{-\sigma}.
\end{align*}
Using the auxiliary function $U_k$, defined in \eqref{eq:definition-U-tilde-U}, we deduce
\begin{align}
|\partial_x^{\alpha}F_p(\tau,|x|)| &\leq  \sum_{j=0}^{|\alpha|}{|P_j(x)| \sum_{t=0}^{j}{\binom{j}{t}}{ U_{p/2+t}(\tau)   }},  \quad \text{(from the trival bound)}\label{eq:bound-partial-F_p-uniform-detailled}  \\
 |\partial_x^{\alpha}F_p(\tau,|x|)| &\leq  B_{\sigma}(|x|)\imag(\tau)^{-\sigma}\sum_{j=0}^{|\alpha|}{|P_j(x)| \sum_{t=0}^{j}{\binom{j}{t}}{ U_{p/2-2\sigma+t}(\tau)   }}, \quad  \text{if }  |x|>0. \label{eq:bound-partial-F_p-away-form-zero} 
\end{align}
We now apply the triangle inequality to \eqref{eq:differentiating-bpn-under-integral} and use Lemma \ref{lem:upper-bounds-for-U}, applied with $k = p/2+t$ and the binomial theorem (read ``backwards"), to deduce from  \eqref{eq:bound-partial-F_p-uniform-detailled} that
\begin{equation}
|\partial_x^{\alpha}b_{p,n}(|x|)| \leq  64 C_0 e^{\pi n y_0} \sum_{j=0}^{|\alpha|}{|P_j(x)| \left( y_0^{-p/2}(1+y_0^{-1})^j + y_0^{-p/4}(1+y_0^{-1/2})^{j} \right) }.\label{eq:bound-partial-b_p-uniform-detailled}
\end{equation}
If $y_0 = \frac{p}{2\pi n}$ (so that $1/y_0 \leq 2n$), then  \eqref{eq:bound-partial-b_p-uniform-detailled} implies (after some calculations) 
\begin{equation}
|\partial_x^{\alpha}b_{p,n}(|x|)| \leq H_1 n^{p/2} (2\pi e^2/p)^{p/4}\sum_{j=0}^{|\alpha|}{|P_j(x)| (1+2n)^j },\label{eq:bound-partial-b_p-uniform-detailled-final}
\end{equation}
for some absolute constant $H_1 >0$. We deduce similarly from \eqref{eq:bound-partial-F_p-away-form-zero} and Lemma \ref{lem:upper-bounds-for-U}, applied with $k= p/2-2 \sigma + t =2+2 \varepsilon +t \geq 2+2 \varepsilon$, that $|\partial_x^{\alpha}b_{p,n}(|x|)|$ is less than or equal to
\begin{equation}
\varepsilon^{-2} C_0 B_{\sigma}(|x|) y_0^{-\sigma}  e^{\pi n y_0} \sum_{j=0}^{|\alpha|}{|P_j(x)| \left( y_0^{-2(1+\varepsilon)}(1+y_0^{-1})^j + y_0^{-(1+ \varepsilon)}(1+y_0^{-1/2})^{j} \right)    },\label{eq:bound-partial-b_p-away-from-zero-detailled}
\end{equation}
if $|x|>0$. If $y_0 = \frac{\sigma}{\pi n}$ (so that $1/y_0 \leq 30n$) then \eqref{eq:bound-partial-b_p-away-from-zero-detailled} implies (after some calculations)
\begin{equation}
|\partial_x^{\alpha}b_{p,n}(|x|)| \leq H_2 \varepsilon^{-2} n^{p/4+1+ \varepsilon}|x|^{-p/2+2(1+\varepsilon)} \sum_{j=0}^{|\alpha|}{|P_j(x)|(1+30 n)^j}, \label{eq:bound-partial-b_p-away-from-zero-detailled-final}
\end{equation}
for some absolute constant $H_2$. Here, the choice of $y_0$ also ensured that the term $(\sigma/(\pi e))^{\sigma}$ coming from $B_{\sigma}(|x|)$ disappeared. To obtain the final bounds in Theorem \ref{thm:main-thm-radial-poincare}, it only remains to bound the polynomials $|P_j(x)|$ for $|x| \leq R$ by compactness and continuity (which we do only when $\alpha \neq 0$) and to use $\sum_{j=0}^{|\alpha|}{(1+\kappa n)^j}  \ll_{\kappa,|\alpha|} n^{|\alpha|}$, for $\kappa \in \{2, 30\}$.

\section{Other function spaces}\label{sec:extension-to-non-schwartz}
Here we extend Theorem \ref{thm:main-thm-non-radial} from $\mathcal{S}(\R^d)$ to a larger function space. We closely follow the approach of \cite[Prop. 4]{R-V}, which generalizes to higher dimensions without much difficulty.

\subsection{Preliminaries}\label{sec:notation-function-spaces}
For any $k \in \N_0$, we denote by $C^{k}(\R^d)$ the space of $k$-times continuously differentiable functions $f: \R^d \rightarrow \C$ whose partial derivatives are all bounded on $\R^d$. For $f \in C^{k}(\R^d)$ we denote its $C^k$-norm by $\|f\|_{C^k(\R^d)} = \sum_{|\alpha| \leq k}{\sup_{x \in \R^d}{|\partial^{\alpha}f(x)|}}$. For \emph{every} function $f : \R^d \rightarrow \C$ and \emph{every} $B > 0$, we define the extended real number \[
Q_B(f) = \sup_{x \in \R^d}{\left( (1+|x|^{B})|f(x)| \right)} \in [0,+ \infty]
\] 
and then, for every $B > d$,  the space
\begin{equation}\label{eq:def-WL}
\mathcal{W}_B(\R^d) = \{ f \in C^0(\R^d) \,:\, Q_B(f) < \infty,\, Q_B(\hat{f}) < \infty \}.
\end{equation}
Note that if $B > d+2$ and $f \in \mathcal{W}_B(\R^d)$, then $f \in C^2(\R^d)$. The next Lemma shows that we can then also control the decay of the first-order partial derivatives of $f$.
\begin{lemma}\label{lem:decay-gradient}
Let $B > 0$ and $f \in C^2(\R^d)$.  Then $Q_B(f) < \infty$ implies $Q_{B/2}(|\nabla f|)< \infty$.
\end{lemma}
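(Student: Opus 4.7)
The plan is to use Taylor's theorem with a carefully chosen step size $h = h(x)$ that interpolates between the decay of $f$ and the boundedness of its second derivatives. Since $f \in C^2(\R^d)$ in the sense defined in \S\ref{sec:notation-function-spaces}, the quantity $M := \|f\|_{C^2(\R^d)}$ is finite, and in particular every second-order partial derivative $\partial_j^2 f$ is bounded by $M$. From $Q_B(f) < \infty$ we also have the pointwise estimate $|f(y)| \leq Q_B(f)(1+|y|^B)^{-1}$ for all $y \in \R^d$.

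First I would handle the easy regime $|x| \leq 1$: here $|\partial_j f(x)| \leq M$ and $(1+|x|^{B/2})|\partial_j f(x)| \leq 2M$, so there is nothing to do. Next, for each coordinate direction $e_j$, $j = 1, \ldots, d$, and any real $h \neq 0$, Taylor's theorem with Lagrange remainder gives
\[
\partial_j f(x) \;=\; \frac{f(x + h e_j) - f(x)}{h} \;-\; \frac{h}{2}\, (\partial_j^2 f)(x + \theta h e_j)
\]
for some $\theta = \theta(x,h,j) \in (0,1)$. Restricting to $|h| \leq |x|/2$ ensures $|x + \theta h e_j| \geq |x|/2$, so the decay hypothesis on $f$ yields
\[
|\partial_j f(x)| \;\leq\; \frac{2^{B+1} Q_B(f)}{|h|\, |x|^{B}} \;+\; \frac{M\,|h|}{2}
\qquad (|x| \geq 1,\ 0 < |h| \leq |x|/2).
\]

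The main (and only real) step is to optimize the right-hand side in $|h|$. Balancing the two terms by the choice
\[
|h| \;=\; 2^{(B+2)/2}\,\sqrt{Q_B(f)/M}\ |x|^{-B/2}
\]
gives $|\partial_j f(x)| \leq C_1(B)\,\sqrt{M\, Q_B(f)}\,|x|^{-B/2}$, provided the constraint $|h| \leq |x|/2$ is met. That constraint is equivalent to $|x|^{1 + B/2} \geq C_2(B)\sqrt{Q_B(f)/M}$, which holds for all $|x|$ larger than some threshold $R_0$ depending only on $B$, $Q_B(f)$, $M$. On the compact transition region $1 \leq |x| \leq R_0$, the bound $|\partial_j f| \leq M$ combined with $|x| \leq R_0$ again makes $(1+|x|^{B/2})|\partial_j f(x)|$ bounded. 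Summing over $j = 1,\ldots,d$ gives the desired estimate on $|\nabla f|$, hence $Q_{B/2}(|\nabla f|) < \infty$.

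There is no real obstacle here beyond choosing the cutoff radius and the step size $h$ correctly; the whole argument is a one-dimensional Kolmogorov-type interpolation inequality applied direction by direction, using only Taylor's theorem and the two available hypotheses. Note that the Fourier-decay hypothesis on $\hat f$ from the definition \eqref{eq:def-WL} plays no role in this particular lemma; it will presumably enter later when bounding higher derivatives via Fourier inversion.
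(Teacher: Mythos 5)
Your proof is correct. Both you and the paper use Taylor's theorem with a second-order remainder, but the mechanism is genuinely different. The paper takes $\xi = \varepsilon \nabla f(x)$ with a single fixed small $\varepsilon > 0$, so that pairing against $\nabla f(x)$ produces $|\nabla f(x)|^2$ on the left-hand side of the Taylor identity; the $O(|\xi|^2)$ error is then $O(\varepsilon^2 |\nabla f(x)|^2)$ and can be absorbed by choosing $\varepsilon$ small once and for all (in terms of $\sup |\nabla f|$ and the Hessian bound), after which $|\nabla f(x)|^2 \ll |x|^{-B}$ follows in one line. Your argument is the classical Landau--Kolmogorov interpolation inequality: you fix a coordinate direction, write the centered-difference identity, and optimize over a variable step size $h(x) \asymp |x|^{-B/2}$, handling the constraint $|h| \leq |x|/2$ via a compact transition region. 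The paper's trick is shorter and avoids any optimization in $h$; yours is more elementary and makes the mechanism (``decay of $f$ plus bounded second derivative interpolates to decay of $\nabla f$'') more transparent, and it gives the same exponent $B/2$ with the same dependence $\sqrt{M \, Q_B(f)}$ as one would extract from the paper's argument. One small inefficiency in your write-up: since the second-derivative bound $M$ is used globally (not just near $x$), you do not actually need the constraint $|h| \leq |x|/2$ for the Hessian term, only for controlling $|f(x+he_j)|$ --- but this does not affect correctness.
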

\begin{proof}[Proof sketch]
Suppose that $Q_B(f) < \infty$. For $y \in \R^d$, denote by $H_f(y)$ the Hessian of $f$ at $y$. Then by Taylor's theorem we have, for any $x, \xi \in \R^d$, 
\[
f(x + \xi) = f(x) + \xi \cdot \nabla f(x) + \int_{0}^1{(1-t) \left(\xi  \cdot H_f(x + t\xi) \xi \right)dt}.
\] 
By assumption, $y \mapsto H_f(y)$ is a continuous bounded function on $\R^d$. Hence from the above,
\begin{equation}\label{eq:taylor-expansion-order-2-qualitative}
\xi \cdot \nabla f(x) = f(x)-f(x + \xi) + O(|\xi|^2).
\end{equation}
Fixing $x \in \R^d$ with $|x| \geq 1$ and taking $\xi =  \varepsilon \nabla f(x)$ with $\varepsilon > 0$ chosen small enough in terms of the implied constant in \eqref{eq:taylor-expansion-order-2-qualitative} and $\sup_{\R^d}{|\nabla f|}$ , we conclude.
\end{proof}
\subsection{Convolutions }
We fix a dimension $d \geq 1$ and write
\[
\phi(x) = e^{- \pi |x|^2}, \qquad \phi_{\varepsilon}(x) = \phi(x/ \varepsilon) \varepsilon^{-d}, \qquad 
\psi_{\varepsilon}(x) = \phi( \varepsilon x)
\]
for the Gaussian, the Gaussian approximate identity and the ``flat" Gaussian respectively, where $\varepsilon > 0$ and $x \in \R^d$. We have $\widehat{\phi_{\varepsilon}} =\psi_{\varepsilon}$ and $ \widehat{ \psi_{\varepsilon}} = \phi_{\varepsilon}$. For any $f,g \in C^0(\R^d)$, we define 
\begin{equation}\label{def:convolution-operators}
J_{\varepsilon}f = \psi_{\varepsilon} \cdot ( f \ast \phi_{\varepsilon}), \qquad \tilde{J}_{\varepsilon}g = \phi_{\varepsilon}  \ast ( g  \cdot \psi_{\varepsilon}).
\end{equation}  
For every subset $\Omega \subset \R^d$ and every $r \geq 0$, we write $B_{r}(\Omega)$ to denote the set of all $x \in \R^d$, for which there exists $\omega \in \Omega$ such that $|x- \omega| \leq r$. We write $B_r(x) = B_r(\{x\})$ for $x \in \R^d$.
\begin{lemma}\label{lem:convolution-ops}
The operators $J_{\varepsilon}, \tilde{J}_{\varepsilon}$ have the following properties.
\begin{enumerate}[(i)]
\item For every $f \in C^0(\R^d)$ and all $\varepsilon >0$, we have $J_{\varepsilon}f \in \mathcal{S}(\R^d)$
\item For all $B > d$, all $f \in \mathcal{W}_B(\R^d)$ and all $\varepsilon >0$, we have $\widehat{J_{\varepsilon}f} = \tilde{J}_{\varepsilon}\hat{f}$.
\item There exists a constant $C_1>0$, depending only on $d$, such that for all $f \in C^1(\R^d)$, all $x \in \R^d$, and all $\varepsilon > 0$, we have
\[
|J_{\varepsilon}f(x) -f(x)| \leq C_1  e^{- \pi |\varepsilon x|^2} \left( \varepsilon \sup_{B_1(x)}{|\nabla f|} +  e^{-\frac{\pi}{2 \varepsilon^2}}\|f\|_{C^0(\R^d)}  \right) +C_1  \varepsilon^2  |x|^2 |f(x)| .
\]
\item There exists a constant $C_2 >0$, depending only on $d$, such that for all $g \in C^1(\R^d)$, all $\xi \in \R^d$ satisfying $|\xi| \geq 1$ and all $\varepsilon  \in (0,1]$, we have
\[
|{\tilde{J}}_{\varepsilon}g(\xi) -g(\xi)| \leq C_2\left( \varepsilon \sup_{B_{|\xi|/2}(\xi)}{|\nabla g|} + e^{- (\pi/8) |\xi/ \varepsilon|^2} \|g\|_{C^0(\R^d)} + \varepsilon^2 |\xi|^2 |g(\xi)|  \right).
\]
\end{enumerate}
\end{lemma}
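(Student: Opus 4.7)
The four parts are essentially direct computations with Gaussian convolutions; I outline the approach for each.

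For (i), recall that by the paper's convention $f \in C^0(\R^d)$ is bounded, and $\phi_\varepsilon \in \mathcal{S}(\R^d)$. Hence $f \ast \phi_\varepsilon$ is smooth with all partial derivatives bounded, via $\partial^\alpha(f \ast \phi_\varepsilon) = f \ast \partial^\alpha \phi_\varepsilon$ and the estimate $\|f \ast \partial^\alpha \phi_\varepsilon\|_\infty \le \|f\|_{C^0(\R^d)} \|\partial^\alpha \phi_\varepsilon\|_{L^1}$. Multiplying by the Schwartz function $\psi_\varepsilon$ and using the Leibniz rule then yields a Schwartz function.

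For (ii), the hypothesis $f \in \mathcal{W}_B(\R^d)$ with $B > d$ gives $f, \hat f \in L^1(\R^d) \cap C^0(\R^d)$, so the standard convolution theorem and its multiplicative dual both apply. Explicitly, $\widehat{f \ast \phi_\varepsilon} = \hat f \cdot \psi_\varepsilon$, and since $\hat f \cdot \psi_\varepsilon$ is integrable as well, a second application gives $\widehat{\psi_\varepsilon \cdot (f \ast \phi_\varepsilon)} = \phi_\varepsilon \ast (\hat f \cdot \psi_\varepsilon) = \tilde J_\varepsilon \hat f$.

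For (iii), I would decompose
\[
J_\varepsilon f(x) - f(x) = \psi_\varepsilon(x) \bigl[(f \ast \phi_\varepsilon)(x) - f(x)\bigr] + f(x)\bigl[\psi_\varepsilon(x) - 1\bigr].
\]
Writing $(f \ast \phi_\varepsilon)(x) - f(x) = \int_{\R^d} [f(x-y) - f(x)] \phi_\varepsilon(y)\, dy$, I would split the integral into $|y| \le 1$ and $|y| > 1$. On the unit ball, the mean value theorem gives $|f(x-y) - f(x)| \le |y| \sup_{B_1(x)} |\nabla f|$, and a direct Gaussian first-moment computation yields $\int_{\R^d} |y| \phi_\varepsilon(y) dy \ll \varepsilon$. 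Off the ball, one uses $|f(x-y) - f(x)| \le 2\|f\|_{C^0(\R^d)}$ together with the tail estimate $\int_{|y| > 1} \phi_\varepsilon(y) dy \ll e^{-\pi/(2\varepsilon^2)}$ (obtained by splitting the exponent $-\pi|y/\varepsilon|^2$ into halves and using the minimum value on $|y| \geq 1$). The second term uses $|e^{-u} - 1| \le u$ for $u \ge 0$, giving $|\psi_\varepsilon(x) - 1| \le \pi \varepsilon^2 |x|^2$.

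For (iv), I would use $\int \phi_\varepsilon(\xi - y) dy = 1$ to rewrite
\[
\tilde J_\varepsilon g(\xi) - g(\xi) = \int \phi_\varepsilon(\xi - y) \psi_\varepsilon(y) \bigl[g(y) - g(\xi)\bigr] dy + g(\xi) \int \phi_\varepsilon(\xi - y) \bigl[\psi_\varepsilon(y) - 1\bigr] dy.
\]
The first integral splits according to whether $|y - \xi| \le |\xi|/2$ or not: in the inner region, the choice of radius ensures that the gradient bound on $B_{|\xi|/2}(\xi)$ applies, and one uses the first-moment estimate together with $\psi_\varepsilon \le 1$; in the outer region, $|g(y)\psi_\varepsilon(y) - g(\xi)| \le 2\|g\|_{C^0(\R^d)}$ combines with the Gaussian tail $\int_{|y-\xi| > |\xi|/2} \phi_\varepsilon(\xi - y) dy \ll e^{-(\pi/8)|\xi/\varepsilon|^2}$ (again splitting the exponent in two and evaluating one half at the minimum $|\xi|/(2\varepsilon)$). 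The second integral is controlled using $|\psi_\varepsilon(y) - 1| \le \pi \varepsilon^2 |y|^2$ and the second-moment computation $\int \phi_\varepsilon(\xi - y) |y|^2 dy = |\xi|^2 + \int \phi_\varepsilon(z) |z|^2 dz = |\xi|^2 + C\varepsilon^2$, which for $|\xi| \ge 1$ and $\varepsilon \in (0,1]$ is bounded by $2|\xi|^2$.

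The main obstacle, though more of a bookkeeping point than a genuine difficulty, is matching the precise form of the right-hand sides in (iii) and (iv): choosing the splitting radius $|\xi|/2$ in (iv) so that both the neighborhood $B_{|\xi|/2}(\xi)$ appearing in the statement and the tail constant $\pi/8$ emerge correctly, and keeping the various Gaussian factors ($\psi_\varepsilon(x)$ in (iii), the $|\xi|^2 |g(\xi)|$ term in (iv)) separate so the stated inequalities follow without extra absolute-constant inflation.
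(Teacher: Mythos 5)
Your proof proposal is correct and follows essentially the same route as the paper: (i) and (ii) are the standard Schwartz/convolution arguments the paper also uses; for (iii) your decomposition $\psi_\varepsilon(x)[(f*\phi_\varepsilon)(x)-f(x)] + f(x)[\psi_\varepsilon(x)-1]$ and the subsequent split of the convolution integral at $|y|=1$ coincide (after the change of variables $y \leftrightarrow -y$) with the paper's $X+Y+Z$ decomposition; and for (iv), your split of $g(y)\psi_\varepsilon(y)-g(\xi)$ into $\psi_\varepsilon(y)[g(y)-g(\xi)] + g(\xi)[\psi_\varepsilon(y)-1]$ together with the radius-$|\xi|/2$ dichotomy and the $e^{-\pi|u|^2/2}\cdot e^{-\pi|u|^2/2}$ trick for the tail reproduce the paper's $U+V+W$ argument. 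The only cosmetic differences are that you bound the $W$-type term via the direct second-moment identity $\int\phi_\varepsilon(\xi-y)|y|^2\,dy = |\xi|^2+C\varepsilon^2$ rather than the paper's pointwise estimate $|y+\xi|^2 \leq |\xi|^2(|y|+1)^2$, and a harmless slip where you write the bound $|g(y)\psi_\varepsilon(y)-g(\xi)| \leq 2\|g\|_{C^0}$ rather than $|\psi_\varepsilon(y)[g(y)-g(\xi)]| \leq 2\|g\|_{C^0}$ for the term you actually need; both lead to the same estimate.
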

\begin{proof}
We believe this to be standard, but we sketch the proof for completeness. For (i), we readily check that $\phi_{\varepsilon} \ast f$ is smooth with bounded derivatives. For (ii), we recall that for $B > d$ we have $\mathcal{W}_B(\R^d) \hookrightarrow L^1(\R^d)$, so that the claim follows from $\widehat{\phi_{\varepsilon}} = \psi_{\varepsilon}$ and the convolution theorem. To prove (iii), we write $J_{\varepsilon}f(x)-f(x) = X + Y + Z$, where:
\begin{align*}
X &= \psi_{\varepsilon}(x) \int_{|y| \leq 1}{\phi_{\varepsilon}(y)\int_{0}^{1}{ (\nabla f (x+t y) \cdot y)   dt}\,dy},\\
Y &= \psi_{\varepsilon}(x) \int_{|y| \geq 1}{\phi_{\varepsilon}(y)  \left( f(x+y)-f(x) \right) dy},\\
 Z &= (\psi_{\varepsilon}(x)-1)f(x).
\end{align*}
The integral $X$ gives the first term in the inequality claimed in (iii), where the factor $\varepsilon$ comes from a change of variables $y \leftrightarrow y/ \varepsilon$. The integral $Y$ gives the second,  using $\int_{|y| \geq 1}{\phi_{\varepsilon}(y)} \ll_{d} e^{- \tfrac{\pi}{2 \varepsilon^2}}$. The integral $Z$ gives the third, using $|\psi_{\varepsilon}(x)-1| \leq \pi \varepsilon^2 |x|^2$. To prove (iv), suppose that $|\xi| \geq 1$ and write $\tilde{J}_{\varepsilon}g(\xi)- g(\xi) = U+V+W$, where
\begin{align*}
U &= \int_{|y| \leq |\xi|/2}{\phi_{\varepsilon}(y) \psi_{\varepsilon}(\xi + y)   \int_{0}^{1}{( \nabla g(\xi + t y) \cdot y )dt} \, dy},\\
V &=  \int_{|y| \geq |\xi|/2}{\phi_{\varepsilon}(y) \psi_{\varepsilon}(\xi + y) \left( g(y+ \xi) - g(\xi) \right) dy}, \\
W &= g(\xi) \int_{\R^d}{\phi_{\varepsilon}(y)  \left( \psi_{\varepsilon}(y+\xi)-1 \right) dy}.
\end{align*}
To bound $U$, we use the gradient bound as for $X$. For $V$, we first apply the triangle inequality and then change to the variable $u = y/ \varepsilon$, to obtain
\[
|V| \leq 2\|g\|_{C^{0}(\R^d)} \int_{|y| \geq \tfrac{|\xi|}{2}}{\phi(y/\varepsilon) \psi_{\varepsilon}(y + \xi) \varepsilon^{-d}dy} = 2\|g\|_{C^{0}(\R^d)}\int_{|u| \geq \tfrac{|\xi|}{2 \varepsilon}}{\phi(u) \psi_{\varepsilon}(\varepsilon u + \xi) du}.
\]
Writing $\phi(u) = e^{- \pi |u|^2/2} e^{- \pi |u|^2/2}$ and bounding $\psi_{\varepsilon}(\varepsilon u + \xi) \leq 1$ here, we get the second term claimed in (iv).  For $W$, we apply the triangle inequality and use the estimate
\[
|\psi_{\varepsilon}(y + \xi) - 1| \leq \pi \varepsilon^2 |y+ \xi|^2 \leq \pi \varepsilon^2 |\xi|^2 (|y|+1)^2,
\]
where the last inequality uses the assumption $|\xi| \geq 1$. We bound bound the remaining integral independently of $\varepsilon$,  by changing to the variable $u = y/ \varepsilon$, noting that $(|\varepsilon u|+1)^2 \leq (|u| + 1)^2$, since $\varepsilon \leq 1$.
\end{proof}
\subsection{Limiting argument}\label{sec:limit-argument}
Suppose that  $d \geq 5$ and $A_n, \tilde{A}_n  \in C^{\infty}(\R^d \times S)$ be such that they satisfy the conclusion of Theorem \ref{thm:main-thm-non-radial}. In principle, a similar discussion applies to lower dimensions, using Theorem \ref{thm:non-radial-small-dimensions}, but we stick to $d\geq 5$ for simplicity. 

We consider henceforth a fixed compact  subset $\Omega \subset \R^d$ and we suppose given constants $K, a, c > 0$ so that for all $n \in \N$,
\begin{equation}\label{eq:bound-An-annulus}
\sup_{(x, \zeta) \in \Omega \times S}{\left(|A_n(x, \zeta)|+|\tilde{A}_n(x, \zeta)|\right)} \leq K n^{ad +c}.
\end{equation}
If $A_n$, $\tilde{A}_n$ are as defined in \S \ref{sec:proof-main-theorem}, then Theorem \ref{thm:main-thm-radial-poincare} and Lemma \ref{lem:A-inf-summands-bounds} provide admissible values of $a,c$. Namely, one can take $(a,c) = (1/2,0)$ if $\Omega =  \{0 \}$, or $(a,c) = (5/4,1/8)$ if $0 \notin \Omega$. We proceed generally and specialize to these  values later. Consider a decay rate $B$ satisfying 
\begin{equation}\label{eq:L-constraint}
B > \max{(d+2, 4(1+ad+c))}.
\end{equation}
For all $f, g \in C^0(\R^d)$, satisfying $Q_B(f) < \infty $ and $Q_B(g) < \infty$ and all $x \in \Omega$, we may define
\begin{align*}
Rf(x) = \sum_{n=1}^{\infty}{\int_S{A_n(x, \zeta) f(\sqrt{n}\zeta) d\zeta}}, \qquad 
\tilde{R}g(x) = \sum_{n=1}^{\infty}{\int_S{{\tilde{A}}_n(x, \zeta) g(\sqrt{n}\zeta) d\zeta}},
\end{align*}
which converge absolutely and vary continuously with $x \in \Omega$, since $B > 2(1+ad+c)$. Let $f \in \mathcal{W}_B(\R^d)$. It follows from parts (i) and (ii) of Lemma \ref{lem:convolution-ops}  and from Theorem \ref{thm:main-thm-non-radial} that for all $\varepsilon > 0$,
\begin{align*}
f &= (f-J_{\varepsilon}f) + J_{\varepsilon}f 
= (f-J_{\varepsilon}f) + R(J_{\varepsilon}f) + \tilde{R}(\widehat{J_{\varepsilon}f})\\
&= (f-J_{\varepsilon}f) + (Rf + \tilde{R} \hat{f}) +  R(J_{\varepsilon}f-f) + \tilde{R}(\tilde{J}_{\varepsilon} \hat{f}- \hat{f}),
\end{align*}
as functions on $\Omega$. We want to show that $f  = Rf + \tilde{R} \hat{f}$, so it suffices to show that the terms depending upon $\varepsilon$ tend to zero as $\varepsilon$ tends to zero. By part (iii) of Lemma \ref{lem:convolution-ops}, we have $ \sup_{\Omega}{|f- J_{\varepsilon}f|} \rightarrow 0$, as $\varepsilon \rightarrow 0$ and our assumption \eqref{eq:bound-An-annulus} implies
\begin{align}
\sup_{\Omega}{|R(J_{\varepsilon}f-f)|} &\leq K \sum_{n=1}^{\infty}{ n^{ad+c} \sup_{\sqrt{n}S}{|J_{\varepsilon}f-f|}  }, \label{eq:sup-1}\\
\sup_{\Omega}{|\tilde{R}(\tilde{J}_{\varepsilon}\hat{f}-\hat{f})|} &\leq K \sum_{n=1}^{\infty}{ n^{ad+c} \sup_{\sqrt{n}S}{|\tilde{J}_{\varepsilon} \hat{f}-\hat{f}|}  }.\label{eq:sup-2} 
\end{align}
It follows from part (iii) of Lemma \ref{lem:convolution-ops}, applied with $x = \zeta \sqrt{n}$ and part (iv) with $\xi = \zeta \sqrt{n}$, for $(\zeta, n) \in S \times \N$ and the assumption on the decay rate $B$, that  \eqref{eq:sup-1}, \eqref{eq:sup-2} are both $O(\varepsilon)$. Here, the more subtle terms come from the gradients of $f$ and $\hat{f}$, which may be controlled by Lemma \ref{lem:decay-gradient}, implying the estimates
\[
\sup_{B_1(\sqrt{n}S)}{|\nabla f|} \ll n^{-B/4}, \qquad \sup_{B_{\sqrt{n}/2}(\sqrt{n}S)}{|\nabla \hat{f}|} \ll n^{-B/4}.
\]
To summarize, assuming the bound \eqref{eq:bound-An-annulus} on $A_n$, $\tilde{A}_n$ and assuming $B$ satisfies \eqref{eq:L-constraint}, the interpolation formula \eqref{eq:ultimate-interpolation-formula-in-thm} holds for all $f \in \mathcal{W}_B(\R^d)$ and all $x \in \Omega$ with uniform convergence. Specializing the discussion to the concrete values $(a,c) = (5/4, 1/8)$ and noting that $5/4 > 1/2$ and $4(1+5d/4 + 1/8) = 5d+9/2$, we obtain the following corollary. 
\begin{corollary}\label{cor:extension}
Suppose that $B > 5d+9/2$. Then the interpolation formula \eqref{eq:ultimate-interpolation-formula-in-thm} in Theorem \ref{thm:main-thm-non-radial} holds for all $f \in \mathcal{W}_B(\R^d)$ with absolute convergence at every point and uniform convergence on compact subsets avoiding the origin. 
\end{corollary}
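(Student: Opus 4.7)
The plan is to execute the limiting argument already outlined in \S\ref{sec:limit-argument}, with the quantitative growth of $A_n, \tilde A_n$ supplied by Lemma \ref{lem:A-inf-summands-bounds}. First I would fix a compact subset $\Omega \subset \R^d$ and apply Lemma \ref{lem:A-inf-summands-bounds}: part (i) gives the bound \eqref{eq:bound-An-annulus} with exponents $(a,c) = (1/2, 0)$ in the trivial case $\Omega = \{0\}$, while part (ii) yields $(a,c) = (5/4, 1/8)$ for any $\Omega$ bounded away from the origin. The hypothesis $B > 5d + 9/2 = 4(1 + 5d/4 + 1/8)$ implies in particular $B > 2(1+ad+c)$, so that for every $f \in \mathcal{W}_B(\R^d)$ the two series
\[
Rf(x) = \sum_{n \geq 1}\int_S A_n(x,\zeta) f(\sqrt{n}\zeta)\, d\zeta, \qquad \tilde R g(x) = \sum_{n \geq 1}\int_S \tilde A_n(x,\zeta) g(\sqrt{n}\zeta)\, d\zeta
\]
converge absolutely and uniformly for $x \in \Omega$ and define continuous functions there.

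Next, using Lemma \ref{lem:convolution-ops}(i)--(ii) together with Theorem \ref{thm:main-thm-non-radial} applied to the Schwartz function $J_\varepsilon f$, I would write, on $\Omega$,
\[
f = (f - J_\varepsilon f) + (Rf + \tilde R \hat f) + R(J_\varepsilon f - f) + \tilde R(\tilde J_\varepsilon \hat f - \hat f),
\]
valid for every $\varepsilon > 0$, and reduce the corollary to showing that the three $\varepsilon$-dependent error terms vanish uniformly on $\Omega$ as $\varepsilon \to 0$. The first, $\sup_\Omega |f - J_\varepsilon f|$, is $O(\varepsilon)$ by Lemma \ref{lem:convolution-ops}(iii) and compactness of $\Omega$.

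The heart of the argument is the estimate for $\sup_\Omega |R(J_\varepsilon f - f)|$ and its analogue with $\tilde R$. Applying Lemma \ref{lem:convolution-ops}(iii) at $x = \sqrt{n}\zeta$, respectively Lemma \ref{lem:convolution-ops}(iv) at $\xi = \sqrt{n}\zeta$, each supremum over $\sqrt{n}S^{d-1}$ splits into a gradient contribution, a Gaussian tail, and an $\varepsilon^2 n$-term multiplied by $|f(\sqrt{n}\zeta)|$ or $|\hat f(\sqrt{n}\zeta)|$. The gradient contribution is controlled via Lemma \ref{lem:decay-gradient}, which converts $Q_B(f), Q_B(\hat f) < \infty$ into
\[
\sup_{B_1(\sqrt{n}S)}|\nabla f| \ll n^{-B/4}, \qquad \sup_{B_{\sqrt{n}/2}(\sqrt{n}S)}|\nabla \hat f| \ll n^{-B/4},
\]
giving a series bounded by $\varepsilon \sum_n n^{ad+c - B/4}$, which converges exactly because $B > 4(1+ad+c) = 5d + 9/2$. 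The Gaussian tails decay super-polynomially in $n$ or in $1/\varepsilon$, and the $\varepsilon^2 n$-term is absorbed by $Q_B(f), Q_B(\hat f) < \infty$ thanks to $B > 2(1 + ad+c)$; both contribute $O(\varepsilon)$.

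The main (essentially arithmetic) obstacle is tracking the loss incurred by Lemma \ref{lem:decay-gradient}: passing from $Q_B(f)$ decay of $f$ to $Q_{B/2}$ decay of its gradient loses one factor of $|x|^{B/2}$, and further restricting to spheres of radius $\sqrt{n}$ costs another square root, yielding the polynomial rate $n^{-B/4}$. Pairing this with the weight $n^{ad+c}$ and demanding summability produces precisely the threshold $B > 5d + 9/2$. Uniform convergence on compact subsets avoiding the origin then follows from the same estimates with $\Omega$ allowed to vary over such sets, while absolute convergence at each individual $x \in \R^d$ is already built into the definition of $Rf(x), \tilde R \hat f(x)$ under $B > 2(1+ad+c)$.
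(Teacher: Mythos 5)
Your proposal is correct and follows the paper's argument in \S\ref{sec:limit-argument} essentially verbatim: the same decomposition $f = (f - J_\varepsilon f) + (Rf + \tilde R\hat f) + R(J_\varepsilon f - f) + \tilde R(\tilde J_\varepsilon \hat f - \hat f)$, the same use of Lemma \ref{lem:convolution-ops} parts (iii) and (iv) at radius $\sqrt n$, the same gradient control via Lemma \ref{lem:decay-gradient} yielding the rate $n^{-B/4}$, and the same bookkeeping that identifies $B > 4(1 + 5d/4 + 1/8) = 5d + 9/2$ as the threshold. The only cosmetic difference is that you distinguish the cases $\Omega = \{0\}$ and $0 \notin \Omega$ with their respective $(a,c)$, whereas the paper simply notes $5/4 > 1/2$ and uses $(5/4,1/8)$ throughout.
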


\section{Relations between restrictions of Schwartz functions to spheres}\label{sec:space-of-relations-infinite-dimensional}
Here we elaborate on the remarks on free interpolation made in \S \ref{sec:intro-context}. The main result of this section, Proposition \ref{prop:relations-between-restrictions} below, won't be used elsewhere in the paper, but may give an interesting comparison to other work. We again restrict to dimensions $d \geq 5$ for simplicity. 

Recall that Radchenko and Viazovska prove in \cite[Thm2]{R-V}, that the linear map sending $f \in \mathcal{S}_{\text{rad}}(\R^1)$ to the pair of sequences $(f(\sqrt{n}))_{n \in \N_0}$, $(\hat{f}(\sqrt{n}))_{n \in \N_0}$ defines an isomorphism of Fr{\'e}chet spaces with a subspace of co-dimension one, in the space of all pairs of rapidly decreasing sequences of complex numbers. This subspace is cut out by a single linear functional coming from Poisson summation. 

In our setting of not necessarily radial functions, we consider the linear map
\begin{equation}\label{eq:restriction-to-spheres-sequence-mapping}
\Phi_d : \mathcal{S}(\R^d) \longrightarrow \mathcal{V}_d, \qquad f \mapsto \left( ( f( \sqrt{n}\,  \cdot ))_{n \in \N}, ( \hat{f}(\sqrt{n}\, \cdot ))_{n \in \N} \right),
\end{equation}
where $\mathcal{V}_d$ denotes the  the space all pairs of sequences of functions $f_n, g_n \in C^{\infty}(S^{d-1})$, whose sup-norms decay rapidly with $n$.  
\begin{proposition}\label{prop:relations-between-restrictions}
For $d \geq 5$, the map $\Phi_d$ has infinite dimensional cokernel. In fact, the annihilator of the image of $\Phi_d$ is an infinite dimensional subspace of the dual space $\mathcal{V}_d^{\ast}$.
\end{proposition}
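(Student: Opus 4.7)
The plan is to exhibit an infinite family of linearly independent continuous linear functionals on $\mathcal{V}_d$ that vanish on $\Phi_d(\mathcal{S}(\R^d))$. These will be constructed by combining the lift operators $L_u^{d+2m}$ of Definition \ref{def:lifts} with ``dual-Poisson'' relations for radial Schwartz functions in the auxiliary dimensions $d+2m$, arising from cusp forms on $\Gamma(2)$ of weight $(d+2m)/2$. Since the dimensions $\dim \mathcal{H}_m(\R^d)$ grow without bound as $m \to \infty$ for $d \geq 2$, and cusp forms exist in all sufficiently large weights, this will yield infinitely many independent functionals in the annihilator.

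More precisely, I would pick integers $m$ for which the space of cusp forms of weight $k=(d+2m)/2$ on $\Gamma(2)$ with respect to the slash action of \S \ref{sec:slash-action} is non-zero, and for each such $m$ fix a non-zero cusp form $\Psi_m$, with Fourier expansions at the cusps $\infty$ and $S\cdot\infty$ given by $\Psi_m(\tau) = \sum_{n\geq 1} a_n^{(m)} e^{\pi i n \tau}$ and $-\Psi_m|_k S(\tau) = \sum_{n\geq 1} \tilde{a}_n^{(m)} e^{\pi i n \tau}$. The pair $(\Psi_m, -\Psi_m|_k S)$ solves the \emph{homogeneous} analogue of the functional equations (i)--(iv) of \S \ref{sec:generating-series-and-functional-equations} (with $G_p$ replaced by $0$), and the same density argument via Lemma \ref{lem:density-gaussians-and-continuity} as there yields the radial identity
\[
\sum_{n \geq 1} a_n^{(m)}\, g(\sqrt{n}) \;=\; \sum_{n \geq 1}\tilde{a}_n^{(m)}\,\hat{g}(\sqrt{n}), \qquad g \in \mathcal{S}_{\text{rad}}(\R^{d+2m}).
\]
For each $u \in \mathcal{H}_m(\R^d)$, applying this identity to $g = L_u^{d+2m}f$, using Proposition \ref{prop:lifts}(iii) and the formula $L_u^{d+2m}f(\sqrt{n}) = \int_S f(\sqrt{n}\zeta)\overline{u(\zeta)}\,d\zeta$ valid for $n\geq 1$, gives
\[
\sum_{n\geq 1} a_n^{(m)} \int_S f(\sqrt{n}\zeta)\,\overline{u(\zeta)}\, d\zeta \;=\; i^m \sum_{n \geq 1} \tilde{a}_n^{(m)}\int_S \hat{f}(\sqrt{n}\zeta)\,\overline{u(\zeta)}\, d\zeta
\]
for every $f \in \mathcal{S}(\R^d)$. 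The left minus right hand side, viewed as a function of $((f_n), (g_n))\in\mathcal{V}_d$, defines a linear functional $\ell_{m,u}$ which is continuous (since cusp-form Fourier coefficients grow only polynomially and the sup-norms of $f_n, g_n$ decay rapidly), and which vanishes on $\Phi_d(\mathcal{S}(\R^d))$.

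To establish linear independence of the family $\{\ell_{m,u} : m \in M,\, u \in \mathcal{B}_m\}$, where $M$ is the set of $m$ admitting a non-zero $\Psi_m$ and $\mathcal{B}_m$ an orthonormal basis of $\mathcal{H}_m(\R^d)$, suppose $\sum c_{m,u}\ell_{m,u} = 0$ as an element of $\mathcal{V}_d^{\ast}$, with only finitely many $c_{m,u}$ non-zero. Test this against the element $((f_n),(g_n))$ defined by $f_{n_0} = v \in C^{\infty}(S^{d-1})$, all other $f_n$, $g_n$ zero: the evaluation reduces to $\sum_{m,u} c_{m,u}\,a_{n_0}^{(m)}\,\langle v, u\rangle_{L^2(S)}$, so letting $v$ vary forces the identity $\sum_{m,u} c_{m,u}\,a_{n_0}^{(m)}\, u = 0$ in $\bigoplus_m \mathcal{H}_m(\R^d)$. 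For any fixed $m^\ast$ with some $c_{m^\ast, u} \neq 0$, choose $n_0$ with $a_{n_0}^{(m^\ast)}\neq 0$ (possible since $\Psi_{m^\ast}$ is non-zero); orthogonality of spherical harmonics of distinct degrees, together with orthonormality of $\mathcal{B}_{m^\ast}$, then forces $c_{m^\ast, u} = 0$ for all $u \in \mathcal{B}_{m^\ast}$, a contradiction. Hence the family is linearly independent, and its cardinality $\sum_{m\in M}\dim \mathcal{H}_m(\R^d) = \infty$ gives the claim.

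The main technical obstacle is the existence of non-zero cusp forms of arbitrarily large weight $k$ on $\Gamma(2)$ in the $j_{\Theta}$-twisted slash action of \S \ref{sec:slash-action}. For the standard slash action this is classical: the ring structure of modular forms on $\Gamma(2)$ shows that $\dim M_k(\Gamma(2))$ grows linearly in $k$, and the three cusps impose only finitely many linear conditions, so $\dim S_k(\Gamma(2))$ is eventually positive and grows linearly too. Comparing the two slash actions amounts to a twist by the character $j_{\Theta}^{2k}/j^k$, which is of finite order on $\Gamma(2)$ and therefore preserves the relevant dimensions; in particular cusp forms of weight $(d+2m)/2$ exist for infinitely many $m$, which is all that is used above.
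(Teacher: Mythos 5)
Your proposal is correct and follows essentially the same strategy as the paper: produce annihilating functionals by feeding modular/cusp forms of weight $d/2+m$ on $\Gamma(2)$ (with the $j_{\Theta}$-twisted slash action) through the lift machinery $L_u^{d+2m}$ of Proposition~\ref{prop:lifts}, using the density of Gaussians to convert the homogeneous functional equation into a ``dual Poisson'' relation, and then separating the contributions from distinct $m$ by orthogonality of spherical harmonics. The only genuine difference is in how the infinite family is indexed: the paper fixes, for each degree $m$, the single averaged spherical harmonic $P_m(\zeta) = \sum_{u \in \mathcal{B}_m}{\overline{u(\zeta)}}$ and then lets $\varphi$ range over the whole space $\mathcal{J}_d$ of forms vanishing at $0$ and $\infty$ (which has dimension growing linearly in $m$), while you fix one cusp form $\Psi_m$ per admissible weight and instead vary $u \in \mathcal{B}_m$. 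Both organizations yield an infinite linearly independent family; yours needs only one cusp form per weight, the paper's avoids having to track the dimensions $\dim{\mathcal{H}_m(\R^d)}$.

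Two small points worth fixing. First, by Definition~\ref{def:lifts} and the homogeneity of $u$, the value $L_u^{d+2m}f(\sqrt{n})$ equals $n^{-m/2}\int_{S}{f(\sqrt{n}\zeta) \overline{u(\zeta)} d\zeta}$, not $\int_{S}{f(\sqrt{n}\zeta) \overline{u(\zeta)} d\zeta}$; the factor $n^{-m/2}$ should appear in the definition of $\ell_{m,u}$, exactly as the paper's kernel $P_m(\zeta/\sqrt{n})$ already incorporates it. This is a bookkeeping slip and does not affect the argument, since $n_0^{-m/2} \neq 0$. Second, your closing justification of the existence of cusp forms, via a ``finite order character $j_{\Theta}^{2k}/j^k$'', is shaky for half-integer $k$; the cleaner route (which the paper takes) is to quote the explicit dimension $\dim M_k(\Gamma(2)) = 1 + \lfloor k/2 \rfloor$ and basis $\{\Theta_2^{4j}\Theta_3^{2k-4j}\}$ from \cite[Thm 7.1.7]{rankin-book}, and note that the cusp conditions cut out a subspace whose dimension still grows linearly in $k$.
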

To prepare the proof of this proposition, let us introduce the theta functions
\[
\Theta_2(\tau) = \theta_{10}(\tau) =   \sum_{n \in \Z}{e^{\pi i (n+1/2)^2 \tau}}, \qquad 
 \Theta_4(\tau) = \theta_{01}(\tau) = \sum_{n \in \Z}{ (-1)^n e^{\pi i n^2 \tau}}.
\]
For any half-integer $k \geq 0$, let $M_k(\Gamma(2))$  denote the space of modular forms of weight $k$ for $\Gamma(2)$, where modularity refers to the slash action introduced in \S \ref{sec:slash-action}. By \cite[Thm 7.1.7]{rankin-book}, this space has dimension $1+\lfloor k/2 \rfloor$ and $\{ \Theta_2^{4j}\Theta_3^{2k-4j}\}_{0 \leq j \leq \lfloor k/2 \rfloor }$ is a basis. For $\varphi \in M_k(\Gamma(2))$ we define $\varphi_0 \in M_k(\Gamma(2))$ by  $\varphi_0(\tau)= (-i \tau)^{-k}\varphi(-1/\tau)$.

We moreover fix, for each $m \geq 0$, an orthonormal basis $\mathcal{B}_m \subset \mathcal{H}_m(S^{d-1})$ and define the auxiliary function $P_m(\zeta) = \sum_{u \in \mathcal{B}_m}{ \overline{u(\zeta)}}$, Note that $\langle P_m ,P_{\mu} \rangle_{L^2(S)} = \delta_{m, \mu} |\mathcal{B}_m|$.
\begin{proof}[Proof of Proposition \ref{prop:relations-between-restrictions}]
Recall first that the spaces $M_{k}(\Gamma(2))$ are linearly independent as $k$ varies. We will define a linear map $\varphi \mapsto \varphi^{\ast}$ from the space $\mathcal{M}_d = \oplus_{m \geq 0}{M_{d+m/2}(\Gamma(2))}$ to the annihilator of the image of $\Phi_d$ and show that this map restricts to an injection on the infinite dimensional subspace $\mathcal{J}_d$, consisting of all finite sums of forms $\varphi \in M_{d/2+m}(\Gamma(2))$, such that $\varphi$ and $\varphi_0$ vanish at infinity (the $\varphi$ that vanish at the \emph{cusps} $0$ and $\infty$ of $\Gamma(2)$). 

By linear independence of the spaces $M_k(\Gamma(2))$, it suffices to define $\varphi^{\ast}:\mathcal{V}_d \rightarrow \C$  for $\varphi \in M_{d/2+m}(\Gamma(2))$, in which case the definition is
\begin{align*}
\varphi^{\ast}( (f_n), (g_n)) &= \sum_{n=1}^{\infty}{ \widehat{\varphi}(n)\int_{S^{d-1}}{f_n(\zeta) P_m(\zeta/ \sqrt{n})  d \zeta}}
- i^m \sum_{n=1}^{\infty}{ \widehat{\varphi_0}(n) \int_{S^{d-1}}{g_n(\zeta) P_m(\zeta/ \sqrt{n})  d \zeta}},
\end{align*}
where the series converge absolutely since the Fourier coefficients $\widehat{\varphi}(n)$  of $\varphi$ and $\widehat{\varphi_0}(n)$ of $\varphi_0$ are polynomially bounded. It now suffices to prove the following statements for all $\varphi \in \mathcal{I}_d$.
\begin{enumerate}[(i)]
\item $\varphi^{\ast}(\Phi_d(f)) =0$ for all $f \in  \mathcal{S}(\R^d)$.
\item $\varphi^{\ast} =0$, if and only if $\varphi = 0$.
\end{enumerate}
By continuity, it suffices to verify (i) for all Schwartz functions of the form $f(x) = w(x) e^{\pi i \tau |x|^2}$, with $w \in \cup_{m \geq 0}{\mathcal{B}_m}$ and $\tau \in \H$, since those functions generate a dense subspace of $\mathcal{S}(\R^d)$ (compare with the proof of part (iii) of Proposition \ref{prop:lifts}). In this case, the desired identity reduces  to the trivial identity $\varphi(\tau)- (-i \tau)^{-d/2-m} \varphi_0(\tau) = 0$, by orthogonality of spherical harmonics.  To prove assertion (ii), suppose that $\varphi^{\ast} = 0$, where $\varphi = \sum_{j=1}^{N}{\varphi_j} \in \mathcal{J}_d$,  $\varphi_j \in M_{d/2+m_j}(\Gamma(2))$ and $ m_1 < m_2 < \cdots < m_N$. Fix $n_0 \in \N$ and define $f_n, g_n \in C^{\infty}(S^{d-1})$ by 
\[
f_n(\zeta) = \delta_{n,n_0} \sum_{j=1}^N{\frac{1}{|\mathcal{B}_{m_j}|}\overline{P_{m_j}( \sqrt{n}\zeta)}},  \qquad g_n(\zeta) = 0.
\]
A short computation then shows that $\varphi^{\ast}((f_n),(g_n)) = \sum_{j=1}^N{\widehat{\varphi_j}(n_0)}=0$ and hence $\varphi = 0$, since $n_0$ was arbitrary.
\end{proof}
\section{More on the functions $b_{p,n}(r)$}\label{sec:bessel-kloosterman}
Here we present further connections of the series $F_p(\tau,r)$ defined in  \eqref{eq:def:Fp}, to classical Poincar{\'e} series by expressing the coefficients $b_{p,n}(r)$ defined in \eqref{eq:def-b-p-n} as a sum of Bessel functions times Kloosterman-type sums. By combining the formulas thus obtained with known estimates for Fourier coefficients of cusp forms, we will prove in Proposition \ref{prop:lower-bound-bpn} below  that infinitely many of the functions $b_{p,n}(r)$ are not of rapid decay. This points out two differences between the interpolation formula in Theorem \ref{thm:main-thm-radial-poincare} and the interpolation theorems in \cite{R-V,CKMRV-E8-leech}. The basis functions in those theorems are Schwartz functions, whose values at the interpolation nodes give, together with the values of their Fourier transforms, the ``natural" basis in a suitable space of pairs (or quadruples) of sequences of complex numbers. By contrast, we will prove that, whenever the $n$th Poincar{\'e} series of weight $p/2$ on $\Gamma_0(4)$ does not vanish identically, the function $b_{p,n}(r)$ does not have either of these properties.  For simplicity, we focus on even integers $p$ here and the functions $b_{p,n}(r)$. Similar results should hold for odd $p$ and the functions $\tilde{b}_{p,n}(r)$.

To start, we recall from  \S \ref{sec:a-particular-set-of-words} the definition of the set $\mathcal{B} \subset \overline{\Gamma(2)}$ and its basic properties. We choose  a complete set  of representatives $\mathcal{R}(\mathcal{B}) \subset \mathcal{B}$  for the quotient $\mathcal{B}/ \langle A \rangle$. By absolute and uniform convergence of the generating series $F_p(\tau,r)$, defined in \eqref{eq:def:Fp} and by \eqref{eq:def-b-p-n}, we have
\begin{align}
b_{p,n}(r) &= -\frac{1}{2} \sum_{M \in \mathcal{R}(\mathcal{B})}{\sum_{\ell \in \Z} \int_{iy_{0}+[-1,1]}{ (e^{\pi i \tau r^2}|_{p/2}(MA^{\ell})) e^{- \pi i n \tau}d \tau}} \notag \\
 &= -\frac{1}{2} \sum_{M \in \mathcal{R}(\mathcal{B})}{ \int_{iy_{0}+\R}{ (e^{\pi i \tau r^2}|_{p/2} M) e^{- \pi i n \tau}d \tau}}. \label{eq:pre-explicit-expansion-bpn}
\end{align}
The justification of the second equal sign is implied by assertion (i) of the Lemma \ref{lem:bessel-integral} below, which will be used to evaluate the above integrals. Before we give its statement, let us recall that the Bessel function $J_{\alpha}$ is given by
\[
J_{\alpha}(x) =  \left(\frac{x}{2}\right)^{\alpha}\,\sum_{j=0}^{\infty}{\frac{(-1)^j}{\Gamma(\alpha+1 +j)j!}\left(\frac{x}{2}\right)^{2j}}, \quad x , \alpha > 0.
\]
For integers $a, q$ with $q \geq 1$,  we define the Gauss sum $G_q(a) = \sum_{m =1}^{q}{e^{2\pi i a m^2/q }}$ and for any  coprime integers $c, d$ with $c > 0$, we define 
\[
g_c(d) =  \begin{cases} \frac{1}{2}G_{2c}(d) &\text{ if } c \equiv 0 \pmod{2},\\
 G_{c}(2d) & \text{ if }  c \equiv 1 \pmod{2}.
 \end{cases}
\]
Using Poisson summation one can verify that for any $ M = \begin{psmallmatrix}
 \ast & \ast \\  c & d
\end{psmallmatrix} \in \Gamma_{\theta}$ with $c > 0$, one has
\begin{equation}\label{eq:transformation-law-theta-3-with-gauss-sum}
\Theta_3(Mz) =  g_c(d) (- i(z + d/c))^{1/2} \Theta_3(z),
\end{equation}
for all $z \in \H$ (note that $z + d/c \in \H$, so \S \ref{sec:notation-square-roots} applies); see also \cite[pp. 28-33]{MumfordTata} for a detailed treatment on the transformation laws of $\Theta_3(\tau)$ and $\vartheta(z, \tau)$, or \cite[Thm 10.10]{Iwaniec} for the closely related function $\theta(z) = \Theta_3(2z)$. Raising \eqref{eq:transformation-law-theta-3-with-gauss-sum} to the eighth power, we deduce from \eqref{eq:automorphic-factors-the-same} that $g_c(d)^8 = c^4$ and in particular, $|g_c(d)| = \sqrt{c}$. 
\begin{lemma}\label{lem:bessel-integral}
For matrices $M = \begin{psmallmatrix}
a & b\\ c & d
\end{psmallmatrix} \in \Gamma_{\theta}
$ with $ c> 0$, real numbers $y_0 > 0$, $r \geq 0$ and integers $n$, $p$ such that $p \geq 5$, define the integral
\[
I_p(M, r, n, y_0) =  \int_{i y_{0}+\R}{ (e^{\pi i \tau r^2}|_{p/2} M) e^{- \pi i n \tau}d \tau}.
\]
\begin{enumerate}[(i)]
\item The integral $I_p(M, r, n, y_0)$ converges absolutely and is independent of $y_0$.
\item For all $n \leq 0$, we have $I_p(M, r, n, y_0) = 0$.
\item For all $n \geq 1$, we have $
I_p(M,0 ,n,y_0) = \frac{2 \pi (\pi n)^{p/2-1}}{\Gamma(p/2)}  g_c(d)^{-p} e^{\pi i \frac{d}{c}n}.
$
\item For all $n \geq 1$ and $r > 0$, we have \[
I_p(M,r ,n,y_0) =
 (2\pi) (n/r^2)^{p/4-1/2} c^{p/2-1} g_{c}(d)^{-p} e^{\pi i \frac{a}{c}r^2} e^{\pi i \frac{d}{c}n} J_{p/2-1}(2 \pi r \sqrt{n}/c).
\]
\end{enumerate}
\end{lemma}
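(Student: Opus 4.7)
The plan is to first rewrite the integrand in closed form so that a single substitution exhibits it as a standard contour integral of known type, and then to evaluate the four cases by either a decay argument, the Gamma-function Mellin-Barnes formula, or Schläfli's integral representation of the Bessel function.

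The starting point is the transformation law \eqref{eq:transformation-law-theta-3-with-gauss-sum}, which gives $j_{\Theta}(M,\tau) = g_c(d)(-i(\tau + d/c))^{1/2}$, together with the identity \eqref{eq:surprisingly-useful-identity}, which gives $M\tau = a/c - 1/(c^2(\tau+d/c))$. Substituting $w=\tau + d/c$ (which shifts the contour $iy_0 +\R$ to itself since $d/c \in \R$) I would obtain
\[
I_p(M,r,n,y_0) = g_c(d)^{-p}\, e^{\pi i a r^2/c}\, e^{\pi i n d/c} \int_{iy_0 + \R} (-iw)^{-p/2} \exp\!\left(-\frac{\pi i r^2}{c^2 w}\right) e^{-\pi i n w}\, dw.
\]
All four parts will follow by analyzing this one integral.

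For part (i), I would observe that on the contour $|{-iw}|^{-p/2} = |w|^{-p/2} = ((t+d/c-d/c)^2 + y_0^2)^{-p/4}$ is $L^1$ since $p\geq 5$, that $|\exp(-\pi i r^2/(c^2 w))|=\exp(\pi r^2 y_0/(c^2|w|^2))\leq e^{\pi r^2/(c^2 y_0)}$ is bounded, and that $|e^{-\pi i n w}|=e^{\pi n y_0}$ is constant in $t$. Independence of $y_0$ comes from Cauchy's theorem applied to a large rectangle between heights $y_0$ and $y_0'$, since the integrand is holomorphic in $w \in \H$ (as $-iw$ avoids the negative real axis, per \S \ref{sec:notation-square-roots}) and decays as $|t|^{-p/2}$ on the horizontal segments.

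For part (ii), let $y_0 \to \infty$. Trivial bounds give $|I_p| \ll e^{\pi n y_0} \int_{\R}(t^2 + y_0^2)^{-p/4}\,dt = O(e^{\pi n y_0} y_0^{1-p/2})$, which tends to $0$ when $n \leq 0$ and $p\geq 5$.

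For part (iii), the exponential in $1/w$ disappears. Set $u = -iw$ so that $u$ ranges over the vertical line $\real u = y_0$ and use the representation
\[
(y_0 - it)^{-p/2} = \frac{1}{\Gamma(p/2)}\int_0^\infty s^{p/2-1} e^{-(y_0-it)s}\,ds,
\]
then switch the order of integration and recognize the inner integral as the Fourier inversion for $\delta(s-\pi n)$, picking up a factor $2\pi$ since $n>0$. This yields exactly $\frac{2\pi(\pi n)^{p/2-1}}{\Gamma(p/2)}\, g_c(d)^{-p} e^{\pi i n d/c}$.

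The main case is part (iv). The rescaling $w = u/(\pi n)$ of the variable transforms the inner integral into
\[
(\pi n)^{p/2-1}\, \frac{1}{i}\int_{\Gamma} u^{-p/2}\, e^{u - (\pi r/c)^2 n/u}\, du,
\]
for an appropriate vertical contour $\Gamma$ with $\real u > 0$. The goal is then to recognize this as Schläfli's integral representation
\[
J_{\nu}(z) = \frac{(z/2)^\nu}{2\pi i}\int_{\gamma - i\infty}^{\gamma + i\infty} u^{-\nu-1} e^{u - z^2/(4u)}\, du \qquad (\gamma > 0),
\]
applied with $\nu = p/2 - 1$ and $z = 2\pi r\sqrt{n}/c$. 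Combining everything and simplifying the factors $(c\sqrt{n}/r)^{p/2-1} = c^{p/2-1}(n/r^2)^{p/4-1/2}$ yields the stated formula. The main obstacle, such as it is, is purely bookkeeping: keeping the choice of argument for $(-iw)^{-p/2}$ compatible with the branch used in Schläfli's formula, correctly tracking the orientation change under $w\mapsto u = -iw$ (the factor of $-i$ from $dw = i\, du$ combined with the orientation reversal), and simplifying the resulting powers of $\pi$, $n$, $r$, $c$ to obtain the clean formula in part (iv).
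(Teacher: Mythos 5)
Your proposal follows the same overall framework as the paper: use the transformation law \eqref{eq:transformation-law-theta-3-with-gauss-sum} and the identity \eqref{eq:surprisingly-useful-identity} to rewrite the slashed Gaussian, then shift the contour by $d/c$ to extract the prefactor $g_c(d)^{-p}e^{\pi i a r^2/c}e^{\pi i d n/c}$ and reduce to a scalar integral over a vertical line; part (i) is the same. The genuine differences are in parts (iii) and (iv). For part (iii) the paper simply quotes the closed form $\int_{\R}\frac{e^{i\nu t}}{(\eta+it)^{z}}\,dt = \frac{2\pi e^{-\nu\eta}\nu^{z-1}}{\Gamma(z)}$ from Gradshteyn--Ryzhik, while you reconstruct it via the Gamma integral plus a Fourier-inversion/delta-function heuristic; that heuristic step ("recognize the inner integral as $2\pi\delta(s-\pi n)$") is not rigorous as stated and would need to be replaced by a genuine justification (dominated-convergence with a Gaussian regularizer, or simply citing the formula the paper uses). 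For part (iv) the paper expands $e^{-\pi i\beta/\tau}$ in powers of $1/\tau$, integrates term-by-term using the case $r=0$, and recognizes the resulting series as the power series of $J_{p/2-1}$; you instead recognize the resulting vertical-line integral as Schl\"afli's (Bromwich-type) contour representation
\[
J_{\nu}(z) = \frac{(z/2)^{\nu}}{2\pi i}\int_{\gamma-i\infty}^{\gamma+i\infty} u^{-\nu-1}\,e^{u - z^2/(4u)}\,du, \qquad \gamma>0,\ \real\nu>-1,
\]
and read off the answer directly. These are really two presentations of the same calculation: the paper's term-by-term summation is precisely the standard power-series derivation of Schl\"afli's formula. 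Your version is more compact because it pushes that derivation into a cited identity; the paper's is more self-contained. Your bookkeeping of the substitution $u=-iw$ followed by $w\mapsto w/(\pi n)$ and of the orientation factor $i$ is consistent, and the exponent algebra $(n/\beta)^{p/4-1/2} = c^{p/2-1}(n/r^2)^{p/4-1/2}$ checks out. The only substantive caveat is to replace the informal delta-function step in (iii) with a rigorous argument (or a citation), and, if you keep Schl\"afli's formula, to record the branch convention for $u^{-\nu-1}$ and the constraint $\real\nu > -1$ (automatic here since $p\geq 5$).
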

The proof of Lemma \ref{lem:bessel-integral} closely follows standard computations in text books, for example \cite[Ch. 3.2]{Iwaniec}. We include them for completeness, convenience of the reader and because of the minor issue that the parameter $r^2$ is not an integer in our setting. 
\begin{proof}[Proof of Lemma \ref{lem:bessel-integral}]
We abbreviate by $g$ the function $g(\tau) =(e^{\pi i \tau r^2}|_{p/2} M) e^{- \pi i n \tau}$. 

For part (i), note that for $\tau = t + iy_0$, 
\begin{equation}\label{eq:estimate-for-bessel-integral}
 |g(\tau)| = |g(t + iy_0)| \leq \frac{e^{\pi n y_0}}{|c\tau + d|^{p/2}}  = \frac{e^{\pi n y_0}}{((ct +d)^2 + c^2 y_0^2)^{p/4}},
\end{equation}
which is an integrable function of $t$. Independence $y_0$ follows by applying Cauchy's Theorem to the function $g(\tau)$ and rectangles $y_0 \leq \imag(\tau) \leq y_1$, $|\real(\tau)| \leq R$, where $R \rightarrow \infty$. Alternatively, it follows from the formulas (iii) and (iv), to be proven below. 

Since we do not need part (ii) further below, we omit the simple proof, but we note that the statement of part (ii) would reprove Corollary \ref{cor:b_p-zero-for-negative-n}.

To prepare for parts (iii) and (iv), we write $M \tau = \frac{a}{c}- \frac{1}{c^2(\tau +d/c)}$ and use \eqref{eq:transformation-law-theta-3-with-gauss-sum} to write
\[
g(\tau) = (e^{\pi i \tau r^2}|_{p/2} M) e^{- \pi i n \tau} = \frac{e^{\pi i \frac{a}{c}r^2} e^{- \pi i \frac{r^2}{c^2(\tau+d/c)}}e^{- \pi i n (\tau + d/c -d/c)}}{g_c(d)^{p} (-i(\tau + d/c))^{p/2} }.
\]
By changing variables $\tau \leftrightarrow \tau + d/c$, we obtain
\[
I_p(M,r ,n,y_0) = g_c(d)^{-p} e^{\pi i \frac{a}{c}r^2} e^{\pi i \frac{d}{c}n} \mathcal{J}_p(r,c), \quad \text{where} \quad \mathcal{J}_p(r,c) = \int_{iy_0 + \R}{\frac{e^{ -\pi i \frac{r^2}{c^2 \tau}} e^{- \pi i n \tau}}{(-i \tau)^{p/2}}d\tau}.
\]
For the proof of part (iii), we need the formula $\int_{\R}{\frac{e^{i \nu t}}{(\eta+it)^z}dt} = \frac{(2\pi) e^{- \nu \eta} \nu^{z-1}}{\Gamma(z)}$, taken from \cite[8.315]{GR} and valid for $\real(z) > 1$, $\eta, \nu > 0$, where the argument of $\eta + it$ taken in $(-\pi/2, \pi /2)$, consistent with our convention form \S \ref{sec:notation-square-roots}. By writing $\tau = iy + t$, changing $t$ to $-t$ in the integral and applying the previous formula with $\eta = y_0$, $\nu = \pi n$, $z = p/2$, we obtain 
\[
\mathcal{J}_p(0, c) = e^{\pi n y_0}\int_{\R}{\frac{e^{\pi i n t}}{(y_0 + it)^{p/2}}} = e^{\pi n y_0} \frac{(2\pi) e^{-\pi n y_0} (\pi n)^{p/2-1}}{\Gamma(p/2)} = \frac{(2\pi) (\pi n)^{p/2-1}}{\Gamma(p/2)}.
\]
For the proof of part (iv), we introduce the variable $\beta = r^2/ c^2 > 0$. We write $e^{-\pi i (\beta/\tau)} = \sum_{j=0}^{\infty}{\tfrac{1}{j!}(- \pi i \beta/\tau)^j}$ and reduce to the case $r = 0$ considered (iii) in the following way: 
\begin{align*}
\mathcal{J}_p(r, c) &= \sum_{j=0}^{\infty}{ \frac{1}{j!} \left(- \pi i \beta \right)^{j}\int_{iy_0 + \R}{  \frac{ e^{- \pi i  n \tau} }{\tau^{j}(-i \tau)^{p/2}}   d\tau}} = \sum_{j=0}^{\infty}{ \frac{1}{j!} \left(- \pi \beta \right)^{j} \mathcal{J}_{p+2j}(0,c)}  \\
&= \sum_{j=0}^{\infty}{ \frac{(-1)^{j}}{j!} \left(\pi \beta \right)^{j} \frac{(2\pi) (\pi n)^{p/2+j-1}}{\Gamma(p/2+j)}} =  (2\pi) (n/\beta)^{p/4-1/2} J_{p/2-1}(2 \pi \sqrt{\beta n}) \qedhere
\end{align*}

%
%
\end{proof}
To proceed with the computation \eqref{eq:pre-explicit-expansion-bpn}, let us define, for all $n,c \in \N$ such that $c$ is even and all $r \in \C$,  the sum
\begin{equation}\label{eq:Kloosterman-type-sum}
S_p(r,n,c) =\sum_{d=1, \gcd(c,d) = 1}^{2c}{ (\sqrt{c}/ g_c(d))^{p} e^{\pi i ( \frac{\alpha(c,d)}{c}r^2 + \frac{dn}{c})} },
\end{equation}
where $ \alpha(c,d) \in \Z$  is defined by requiring that $\left[\begin{psmallmatrix}
 \alpha(c,d) & \ast\\ c & d
\end{psmallmatrix} \right] \in \mathcal{B}$, which is possible by Lemma \ref{lem:identifying-sets-B-tilde-B}. We can define an analogous sum $\tilde{S}_p(r,n,c)$ for all odd positive integers $c$. Inserting the formulas from Lemma \ref{lem:bessel-integral} into \eqref{eq:pre-explicit-expansion-bpn}, we obtain
\begin{align}
b_{p,n}(r) &= - \pi (n/r^2)^{p/4-1/2} \sum_{\substack{c=1\\ c \equiv 0 (2)}}^{\infty}{  \frac{1}{c} S_p(r,n,c) J_{p/2-1}(2\pi r \sqrt{n}/c) }, \quad r> 0, \label{eq:bpn-explicit-at-non-zero} \\
b_{p,n}(0) &= - \frac{\pi (\pi n)^{p/2-1}}{\Gamma(p/2)} \sum_{\substack{c=1\\ c \equiv 0 (2)}}^{\infty}{ \frac{1}{c^{p/2}} S_p(0,n,c) }.  \label{eq:bpn-explicit-at-zero}
\end{align}
A similar formula holds for $\tilde{b}_{p,n}(r)$ involving $\tilde{S}_p(r,n,c)$, where we sum over odd positive integers. 

Let us now specialize \eqref{eq:bpn-explicit-at-non-zero} to radii $r = \sqrt{m}$ with $m \in \N$ and to \emph{even} dimensions $p \geq 6$ and moreover introduce the notation $k= p/2 \in \Z_{\geq 3}$. We shall relate the values $b_{2k,n}(\sqrt{m})$ to Fourier coefficients of (actual) Poincar{\'e} series of weight $k$ on $\Gamma_0(4)$. To that end, we start by replacing $c$ by $c/2$ in \eqref{eq:bpn-explicit-at-non-zero} and correspondingly sum over $c \in  4\N$, giving
\begin{equation}\label{eq:massaging-b_pn-at-sqrt-m-1}
b_{2k,n}(\sqrt{m}) = - \pi \left( \frac{n}{m} \right)^{\frac{k-1}{2}} \sum_{\substack{c=1\\ c \equiv 0 (4)}}^{\infty}{  \frac{2}{c} S_p(\sqrt{m},n,c/2) J_{k-1}(4\pi  \sqrt{nm}/c) }.
\end{equation} 
Next, we rewrite the factor $\sqrt{c/2}/g_{c/2}(d)$ appearing in $S_p(\sqrt{m},n,c/2)$. For this, we use that for $4|c$ and $d$ coprime to $c$, we have $G_c(d)^2 = i (2c) \chi(d)$ , where $\chi: (\Z/4\Z)^{\times} \rightarrow \{-1, 1 \}$ denotes the non-trivial character. (This can be deduced from \eqref{eq:transformation-law-theta-3-with-gauss-sum} and \cite[eq. 2.73, p. 46]{Iwaniec}, for example.) Hence $( \sqrt{c/2}/g_{c/2}(d) )^{2k} =i^{-k} \chi(d)^k$.
%
%
Moreover, we have $\alpha(c/2,d) d \equiv 1 \pmod{c}$ and thus we can rewrite \eqref{eq:massaging-b_pn-at-sqrt-m-1} as
\begin{equation}\label{eq:b_kn-at-sqrt-m}
b_{2k,n}(\sqrt{m}) = - 2\pi i^{-k} \left( \frac{n}{m} \right)^{\frac{k-1}{2}} \sigma_k(m,n),
\end{equation}
where, abbreviating $e(w) = e^{2\pi i w}$ and writing $\bar{d}$ for the inverse of $d$ mod $c$, 
\begin{align*}
\sigma_k(m,n) &= \sum_{\substack{c=1\\ c \equiv 0 (4)}}^{\infty}{  \frac{1}{c} S_{\chi^k}(m,n,c) J_{k-1}(4\pi  \sqrt{nm}/c) },\\
S_{\chi^k}(m,n,c) &= \sum_{d \in (\Z/c\Z)^{\times}}{  \chi(d)^k e\left( \frac{ \bar{d}m + nd}{c}\right)}.
\end{align*}
Consider the $m$th Poincar{\'e} series for $\Gamma_0(4)$ of weight $k$ and character $\chi^k \in \{\chi, 1 \}$:
\begin{equation}\label{eq:definition-actual-poincare-series}
P_m(z) = \sum_{\gamma \in \Gamma_{\infty} \backslash \Gamma_0(4)}{\chi(\gamma)^k (c_{\gamma}z + d_{\gamma})^{-k}e(m(\gamma z))},
\end{equation}
where $\Gamma_{\infty} \subset \Gamma_0(4)$ denotes the subgroup of upper triangular matrices, where $c_{\gamma}, d_{\gamma}$ denote the bottom row entries of $\gamma$ and where we suppress the dependence on $k$ and hence $\chi$ from the notation. It is well-known \cite[Lemma 14.2]{Kowalski-Iwaniec} that  its $n$th Fourier coefficient is given by
\begin{equation}\label{eq:nth-FC-of-P_m}
\widehat{P_m}(n) = 2 \pi i^{-k} \left( \frac{n}{m} \right)^{\frac{k-1}{2}} \left( \delta(m,n) + \sigma_k(m,n) \right).
\end{equation}
Comparing \eqref{eq:nth-FC-of-P_m} with \eqref{eq:b_kn-at-sqrt-m}  we deduce that $b_{2k,n}(\sqrt{m}) = - \widehat{P_m}(n)$, for all $m,n \in \N$ such that $n \neq m$. The following proposition shows that, for infinitely many $n$, the upper bounds for $b_{p,n}(r)$ given in \eqref{eq:bounds-for-bpn-nonzero-r-in-thm} in Theorem \ref{thm:main-thm-radial-poincare}, can't be significantly improved: the term $r^{-(p/2) +2 + \varepsilon}$ cannot be replaced by $r^{-(p/2) + 1- \varepsilon}$.
\begin{proposition}\label{prop:lower-bound-bpn}
Fix an even integer $p \geq 6$ and let $k = p/2$. There exist infinitely many integers $n \geq 1$ with the following property. For each $\varepsilon > 0$, the function $r \mapsto r^{k-1+\varepsilon}b_{2k,n}(r)$ is unbounded on $(0, +\infty)$, in fact, unbounded on the subset of $r = \sqrt{m}$, $m \in \N$. 
\end{proposition}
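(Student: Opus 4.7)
\emph{Proof plan.} The starting point is the identity $b_{2k,n}(\sqrt m) = -\widehat{P_m}(n)$ for $m \neq n$, derived in the paragraphs preceding the proposition. I plan to exploit a reciprocity between the indices $m$ and $n$. Since $\chi^k$ takes values in $\{-1, 1\}$, the involution $d \mapsto \bar d$ (inverse modulo $c$) on $(\Z/c\Z)^{\times}$ preserves the character factor and swaps the two terms $\bar d m$ and $nd$ in the exponent of $S_{\chi^k}(m,n,c)$, so that $S_{\chi^k}(m,n,c) = S_{\chi^k}(n,m,c)$ and hence $\sigma_k(m,n) = \sigma_k(n,m)$. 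Combining this symmetry with \eqref{eq:nth-FC-of-P_m} yields, for $m \neq n$, the relation $m^{k-1}\,\widehat{P_m}(n) = n^{k-1}\,\widehat{P_n}(m)$, so that
\[
r^{k-1+\varepsilon}\,b_{2k,n}(r)\big|_{r = \sqrt m} \;=\; -\,n^{k-1}\,m^{(\varepsilon - k + 1)/2}\,\widehat{P_n}(m).
\]

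Next, I would invoke a standard $\Omega$-result on Fourier coefficients of cusp forms. If $P_n$ is a nonzero holomorphic cusp form of weight $k$ on $\Gamma_0(4)$ with character $\chi^k$, then the Rankin--Selberg convolution $L(s, P_n \otimes \overline{P_n})$ has a simple pole at $s = k$, which by a Tauberian theorem yields $\sum_{m \leq X}|\widehat{P_n}(m)|^2 \sim C(n)\, X^k$ with $C(n) > 0$. A dyadic pigeonhole argument on this asymptotic produces infinitely many $m$ with $|\widehat{P_n}(m)| \gg_n m^{(k-1)/2}$. Along such $m$, the displayed expression has absolute value $\gg_n m^{\varepsilon/2}$, which is unbounded as $m \to \infty$; this is precisely the required unboundedness along $r = \sqrt m$.

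The proposition thus reduces to showing that $P_n$ is a nonzero cusp form for infinitely many $n \geq 1$. By Petersson's formula $\langle f, P_n \rangle \propto n^{1-k}\,\overline{a_n(f)}$ for $f \in S_k(\Gamma_0(4), \chi^k)$, the condition $P_n = 0$ is equivalent to $a_n(f) = 0$ for every cusp form $f$. If this vanishing held for all but finitely many $n$, every cusp form would be a polynomial in $q = e^{2\pi i z}$ of bounded degree and hence identically zero. So provided the cuspidal space $S_k(\Gamma_0(4), \chi^k)$ is nontrivial, infinitely many $n$ satisfy $P_n \neq 0$.

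The main obstacle, then, is the nontriviality of $S_k(\Gamma_0(4), \chi^k)$ for the given $k = p/2 \geq 3$, which must be verified case by case from standard dimension formulas and is the only step where the specific weight enters. The reciprocity is a direct consequence of the reality of $\chi^k$, and the Rankin--Selberg $\Omega$-bound is classical; once the cuspidal space is known to be nontrivial, assembling the three ingredients immediately produces the required unbounded sequence of values $r^{k-1+\varepsilon}|b_{2k,n}(r)|$ along $r = \sqrt m$.
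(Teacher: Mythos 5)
Your proposal follows essentially the same route as the paper: both rely on the identity $b_{2k,n}(\sqrt m)=-\widehat{P_m}(n)$, the reciprocity $m^{k-1}\widehat{P_m}(n)=n^{k-1}\widehat{P_n}(m)$ coming from the symmetry of the Kloosterman-type sums, and the Rankin--Selberg asymptotic $\sum_{m\le X}|\widehat{P_n}(m)|^2\sim c\,X^k$ to extract an $\Omega$-lower bound. Your ``dyadic pigeonhole'' extraction and the paper's ``assume $b_{2k,n}(r)=O(r^{-A})$, conclude $A\le k-1$'' are just two phrasings of the same comparison.

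The one place where you genuinely diverge, and where you have left a real loose end, is the non-vanishing of $P_n$ for infinitely many $n$. You apply the Petersson unfolding only to $f\in S_k(\Gamma_0(4),\chi^k)$, and therefore need the nontriviality of this cuspidal space, which you defer to ``standard dimension formulas\ldots case by case.'' The paper avoids this entirely: the unfolding computation $\langle f,P_n\rangle\propto n^{1-k}\overline{\widehat f(n)}$ is valid for \emph{any} $f\in M_k(\Gamma_0(4),\chi^k)$ (not just cusp forms), since $P_n$ is cuspidal and that alone ensures convergence. Since $\Theta_3^{2k}$ is a nonzero element of $M_k(\Gamma_0(4),\chi^k)$ for every $k$, and a nonzero holomorphic modular form has infinitely many nonzero Fourier coefficients, one gets infinitely many $n$ with $P_n\neq 0$ uniformly in $k\ge 3$, without computing any dimension. (Incidentally, this argument shows $S_k(\Gamma_0(4),\chi^k)\neq 0$ for all $k\ge 3$ as a byproduct, so your missing verification is in fact true; but using $M_k$ with the explicit witness $\Theta_3^{2k}$ is both cleaner and self-contained.) If you want to keep the $S_k$ formulation, you should at least supply the nontriviality uniformly in $k$ rather than gesturing at case-by-case dimension counts.
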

\begin{proof}
We first recall that, for any $n \geq 1$, taking the Petersson inner product of the $n$th Poincar{\'e} series $P_n$ (as defined in \eqref{eq:definition-actual-poincare-series}) with any $f \in M_k(\Gamma_0(4), \chi^k)$ returns the $n$th Fourier coefficient $\widehat{f}(n)$ of $f$, up to nonzero scalars (see \cite[Lemma 14.3]{Kowalski-Iwaniec}). Since the space $M_k(\Gamma_0(4), \chi^k)$ is nonzero and a nonzero modular form has infinitely many nonzero Fourier coefficients, there are infinitely many indices $n$, for which $P_n$ does not vanish identically.

Fix an index $n \in \N$ such that $P_n \neq 0$ and assume that for some $A > 0$, we have $b_{2k,n}(r) = O(r^{-A})$, as $r \rightarrow \infty$. We will show that $A \leq k-1$. Our main calculation $b_{2k,n}(\sqrt{m}) = - \widehat{P_m}(n)$ and \eqref{eq:b_kn-at-sqrt-m}, \eqref{eq:nth-FC-of-P_m} imply that, for all $m \in \N \setminus \{n\}$,
\[
|b_{2k,n}(\sqrt{m})| = |\widehat{P_m}(n)| = \left( \frac{n}{m} \right)^{k-1} |\widehat{P_n}(m)|.
\]
Our assumption then gives $|\widehat{P_n}(m)| = O(m^{-A/2+k-1})$, as $m \rightarrow \infty$. In particular, we have $M^{-k}\sum_{m=1}^{M}{|\widehat{f}(m)|^2} = O(M^{-A+k-1})$, as $M \rightarrow \infty$. On the other hand,  the Rankin--Selberg method (see \cite[Theorem 1 and Remark B on page 364]{rankin-rankin-selberg}) implies that for every $f\in S_k(\Gamma_0(4),\chi^k)$, the Fourier coefficients $\widehat{f}(m)$  satisfy
\[
\sum_{m=1}^{M}{|\widehat{f}(m)|^2} = c_k(f) M^{k} + O(M^{k-2/5}), \qquad M \rightarrow \infty,
\]
where $c_k(f)$ is proportional to the Peterson norm of $f$ (which is $>0$, if and only $f \neq 0$). Taking $f =P_n$ here and comparing the two asymptotic relations, we deduce $A \leq k-1$.
\end{proof}

\begin{remark-non}
Little is known about the (non-)vanishing of individual Poincar{\'e} seires, even in level $1$. A result of Mozzochi \cite{mozzochi}[Thm 2], that builds up on the work of Rankin \cite{rankin-poincare} in level 1, implies that, for all sufficiently large even integral weights $k$, the first $O(k^{1.99})$ Poincar{\'e} series of weight $k$ on $\Gamma_0(N)$, do not vanish identically.

\end{remark-non}

\subsection*{Acknowledgments}
I would like to thank Maryna Viazovska, my doctoral advisor, for her valuable ideas, her support and guidance over the course of this work. That a formula like the one in Theorem \ref{thm:main-thm-non-radial} could exist, was conjectured by Danylo Radchenko, whom I thank for several enlightening discussions and suggestions. I also thank Matthew de Courcy-Ireland for giving me detailed feedback on earlier drafts of this paper and helpful discussions.


\end{document}